%
%
\documentclass[11pt]{amsart}
\usepackage[all]{xy}
\usepackage {fancybox}
\usepackage{graphicx}
\usepackage{ascmac}
\usepackage{tikz-qtree}
\usepackage{forest}
\usepackage{mathrsfs}
\usepackage{amscd,verbatim}
\usepackage{floatflt}
\usepackage{amssymb}
\usepackage{tikz-cd}


\newtheorem{lemma}{Lemma}[section]

\newtheorem{thm}[lemma]{Theorem}

\newtheorem{prop}[lemma]{Proposition}

\newtheorem{cor}[lemma]{Corollary}

\newtheorem{conj}[lemma]{Conjecture}

\newtheorem{defn}[lemma]{Definition}

\newtheorem{remark}[lemma]{Remark}

\numberwithin{equation}{section}

\begin{document}

\title[$p$-adic Hodge theory for non-commutative algebraic varieties]{Crystalline representations and $p$-adic Hodge theory for non-commutative algebraic varieties}









\newcommand\rightthreearrow{%
        \mathrel{\vcenter{\mathsurround0pt
                \ialign{##\crcr
                        \noalign{\nointerlineskip}$\rightarrow$\crcr
                        \noalign{\nointerlineskip}$\rightarrow$\crcr
                        \noalign{\nointerlineskip}$\rightarrow$\crcr
                }%
        }}%
}

\newcommand{\ulMCorf}{{\ulMCor}^{\text{fin}}}
\newcommand{\ulMMO}{\textbf{\underline{M}}\sO}
\newcommand{\ulMO}{\ul{M}\sO}
\newcommand{\MpU}{(\ol{U},U^{\infty})}
\newcommand{\un}{\mathbf{1}}
\newcommand{\A}{\mathbf{A}}
\renewcommand{\AA}{\mathbb{A}}
\newcommand{\mbA}{\mathbb{A}^1}
\newcommand{\mbAm}{\mathbb{A}^m}
\newcommand{\deAmi}{(\mathbb{A}^{1},d_{i_{1}}\{0\}) \otimes (\mathbb{A}^{1},d_{i_{2}}\{0\}) \otimes \cdots \otimes (\mathbb{A}^{1},\emptyset)}
\newcommand{\deAmoi}{(\mathbb{A}^{1},d_{i_{1}}\{0\}) \otimes (\mathbb{A}^{1},d_{i_{2}}\{0\}) \otimes \cdots \otimes \{0\}}
\newcommand{\deAm}{(\mathbb{A}^{1},d_{{1}}\{0\}) \otimes (\mathbb{A}^{1},d_{{2}}\{0\}) \otimes \cdots \otimes (\mathbb{A}^{1},\emptyset)}
\newcommand{\deAmo}{(\mathbb{A}^{1},d_{{1}}\{0\}) \otimes (\mathbb{A}^{1},d_{{2}}\{0\}) \otimes \cdots \otimes \{0\}}
\newcommand{\deAl}{(\mathbb{A}^{1},d_{1}\{0\})\otimes (\mathbb{A}^{1},d_{2}\{0\})\otimes \cdots \otimes (\mathbb{A}^{1},d_{m-1}\{0\})}
\newcommand{\BB}{\mathbb{B}}
\newcommand{\F}{\mathbf{F}}
\newcommand{\G}{\mathbf{G}}
\renewcommand{\P}{\mathbf{P}}
\renewcommand{\L}{\mathbb{L}}
\newcommand{\N}{\mathbb{N}}
\newcommand{\PP}{\mathbb{P}}
\newcommand{\Q}{\mathbb{Q}}
\newcommand{\Z}{\mathbb{Z}}
\newcommand{\sA}{\mathcal{A}}
\newcommand{\sB}{\mathcal{B}}
\newcommand{\sC}{\mathcal{C}}
\newcommand{\sD}{\mathcal{D}}
\newcommand{\sE}{\mathcal{E}}
\newcommand{\sF}{\mathcal{F}}
\newcommand{\sG}{\mathcal{G}}
\newcommand{\sH}{\mathcal{H}}
\newcommand{\sI}{\mathcal{I}}
\newcommand{\sJ}{\mathcal{J}}
\newcommand{\sK}{\mathcal{K}}
\newcommand{\sL}{\mathcal{L}}
\newcommand{\sO}{\mathcal{O}}
\newcommand{\sP}{\mathcal{P}}
\newcommand{\sQ}{\mathcal{Q}}
\newcommand{\sR}{\mathcal{R}}
\newcommand{\sS}{\mathcal{S}}
\newcommand{\sT}{\mathcal{T}}
\newcommand{\sU}{\mathcal{U}}
\newcommand{\sV}{\mathcal{V}}

\newcommand{\Zt}{\Ztr}
\newcommand{\Shv}{{\operatorname{Shv}}}
\newcommand{\sX}{\mathcal{X}}
\newcommand{\sY}{\mathcal{Y}}
\newcommand{\sZ}{\mathcal{Z}}
\newcommand{\bH}{\mathbb{H}}
\newcommand{\bN}{\mathbb{N}}
\newcommand{\bZ}{\mathbb{Z}}
\newcommand{\fm}{\mathfrak{m}}
\newcommand{\fM}{\mathfrak{M}}
\newcommand{\fU}{\mathfrak{U}}
\newcommand{\fV}{\mathfrak{V}}
\newcommand{\fW}{\mathfrak{W}}
\newcommand{\fX}{\mathfrak{X}}
\newcommand{\fY}{\mathfrak{Y}}
\newcommand{\fZ}{\mathfrak{Z}}
\newcommand{\fn}{\mathfrak{n}}
\newcommand{\fp}{\mathfrak{p}}
\newcommand{\fq}{\mathfrak{q}}
\newcommand{\fr}{\mathfrak{r}}
\newcommand*{\xdashrightarrow}[2][]{%
  \mathrel{%
    \mathpalette{\da@xarrow{#1}{#2}{}\da@rightarrow{\,}{}}{}%
  }%
}
\newcommand{\Xb}{{\overline{X}}}
\newcommand{\Tb}{\overline{T}}
\newcommand{\alphab}{\ol{\alpha}}
\newcommand{\deltab}{\ol{\delta}}
\newcommand{\tilO}{\tilde{\mathcal{O}}}
\newcommand{\Mfget}{\ulM^{\text{h}}_{\text{gfin}}}
\newcommand{\Mhgf}{\ulM^{\text{h}}_{\text{gfin}}}
\newcommand{\Mqfh}{\ulM \text{qfh}}

\newcommand{\tilOmega}{\tilde{\Omega}}
\newcommand{\Ga}{{G^{(0)}}}
\newcommand{\Gb}{{G^{(1)}}}
\newcommand{\CuXY}{\sC_{(\Xb,Y)}}
\newcommand{\ha}{{h^{(0)}}}
\newcommand{\hb}{{h^{(1)}}}
\newcommand{\Mod}{\operatorname{Mod}\hbox{--}}
\newcommand{\Span}{\operatorname{\mathbf{Span}}}
\newcommand{\Cor}{\operatorname{\mathbf{Cor}}}
\newcommand{\Mack}{\operatorname{\mathbf{Mack}}}
\newcommand{\HI}{\operatorname{\mathbf{HI}}}
\newcommand{\Pre}{{\operatorname{\mathbf{Pre}}}}
\newcommand{\Rec}{{\operatorname{\mathbf{Rec}}}}
\newcommand{\SHI}{\operatorname{\mathbf{SHI}}}
\newcommand{\proj}{\operatorname{proj}}

\newcommand{\Cone}{\operatorname{Cone}}
\newcommand{\Ext}{\operatorname{Ext}}
\newcommand{\ul}[1]{{\underline{#1}}}

\newcommand{\Set}{{\operatorname{\mathbf{Set}}}}
\newcommand{\PST}{{\operatorname{\mathbf{PST}}}}
\newcommand{\PS}{{\operatorname{\mathbf{PS}}}}
\newcommand{\NST}{\operatorname{\mathbf{NST}}}
\newcommand{\RS}{\operatorname{\mathbf{RS}}}
\newcommand{\Chow}{\operatorname{\mathbf{Chow}}}
\newcommand{\DM}{\operatorname{\mathbf{DM}}}
\newcommand{\DMR}{\operatorname{\mathbf{DMR}}}
\newcommand{\DR}{\operatorname{\mathbf{DR}}}
\newcommand{\MDM}{\operatorname{\mathbf{MDM}}}
\newcommand{\MDMgm}{\MDM_{\text{gm}}^{\text{eff}}}
\newcommand{\ulMDM}{\operatorname{\mathbf{\underline{M}DM}}}
\newcommand{\Map}{\operatorname{Map}}
\newcommand{\Hom}{\operatorname{Hom}}
\newcommand{\uHom}{\operatorname{\underline{Hom}}}
\newcommand{\uExt}{\operatorname{\underline{Ext}}}
\newcommand{\RHom}{\operatorname{R\underline{Hom}}}
\newcommand{\que}[1]{{\color{green}#1}}
\newcommand{\uG}{{\underline{G}}}
\newcommand{\Ker}{\operatorname{Ker}}
\newcommand{\IM}{\operatorname{Im}}
\renewcommand{\Im}{\operatorname{Im}}
\newcommand{\Coker}{\operatorname{Coker}}
\newcommand{\MpN}{(\ol{N},N^{\infty})}
\newcommand{\Coim}{\operatorname{Coim}}
\newcommand{\Tr}{\operatorname{Tr}}
\newcommand{\Div}{\operatorname{Div}}
\newcommand{\Pic}{\operatorname{Pic}}
\newcommand{\Br}{\operatorname{Br}}
\newcommand{\Spec}{\operatorname{Spec}}
\newcommand{\Proj}{\operatorname{Proj}}
\newcommand{\Sm}{\operatorname{\mathbf{Sm}}}
\newcommand{\Sch}{\operatorname{\mathbf{Sch}}}
\newcommand{\Ab}{\operatorname{\mathbf{Ab}}}
\newcommand{\oo}{\operatornamewithlimits{\otimes}\limits}
\newcommand{\by}{\xrightarrow}
\newcommand{\yb}{\xleftarrow}
\newcommand{\iso}{\by{\sim}}
\newcommand{\osi}{\yb{\sim}}
\newcommand{\rec}{{\operatorname{rec}}}
\newcommand{\pro}[1]{\text{\rm pro}_{#1}\text{\rm--}}
\newcommand{\tr}{{\operatorname{tr}}}
\newcommand{\proper}{{\operatorname{prop}}}
\newcommand{\dom}{{\operatorname{dom}}}
\newcommand{\rat}{{\operatorname{rat}}}
\newcommand{\sat}{{\operatorname{sat}}}
\newcommand{\eff}{{\operatorname{eff}}}
\newcommand{\fin}{{\operatorname{fin}}}
\renewcommand{\o}{{\operatorname{o}}}

\newcommand{\op}{{\operatorname{op}}}
\newcommand{\norm}{{\operatorname{norm}}}
\newcommand{\red}{{\operatorname{red}}}
\newcommand{\cont}{{\operatorname{cont}}}
\newcommand{\Zar}{{\operatorname{Zar}}}
\newcommand{\Nis}{{\operatorname{Nis}}}
\newcommand{\et}{{\operatorname{\acute{e}t}}}
\newcommand{\tto}{\dashrightarrow}
\newcommand{\inj}{\hookrightarrow}
\newcommand{\Inj}{\lhook\joinrel\longrightarrow}
\newcommand{\surj}{\rightarrow\!\!\!\!\!\rightarrow}
\newcommand{\Surj}{\relbar\joinrel\surj}
\newcommand{\Res}{\operatorname{Res}}
\newcommand{\Jac}{{\operatorname{Jac}}}
\newcommand{\id}{{\operatorname{Id}}}
\newcommand{\li}{{\operatorname{li}}}
\newcommand{\divi}{{\operatorname{div}}}
\newcommand{\adm}{{\operatorname{adm}}}
\newcommand{\Supp}{{\operatorname{Supp}}}
\newcommand{\pd}{{\partial}}
\newcommand{\pmods}[1]{\; (\operatorname{mod}^*#1)}
\newcommand{\codim}{{\operatorname{codim}}}
\newcommand{\ch}{{\operatorname{ch}}}
\newcommand{\Sym}{{\operatorname{Sym}}}
\newcommand{\Tot}{{\operatorname{Tot}}}
\newcommand{\CH}{{\operatorname{CH}}}
\newcommand{\Mip}{M^\infty_+}
\newcommand{\M}{\mathbf{M}}
\newcommand{\Mb}{{\overline{M}}}
\newcommand{\Nb}{{\overline{N}}}
\newcommand{\Lb}{{\overline{L}}}
\newcommand{\Zb}{{\overline{Z}}}
\newcommand{\CbS}{{\overline{C}_S}}
\newcommand{\Cbeta}{{\overline{C}_\eta}}
\newcommand{\Gu}{{\underline{G}}}
\def\indlim#1{\underset{{\underset{#1}{\longrightarrow}}}{\mathrm{lim}}\; }
\def\rmapo#1{\overset{#1}{\longrightarrow}}

\newcommand{\Cu}{\mathcal{C}}
\newcommand{\CuX}{\Cu_\Xb}
\newcommand{\CuMN}{\Cu_{(\Mb,N)}}
\newcommand{\ulM}{\underline{\M}}
\newcommand{\can}{\operatorname{can}}
\newcommand{\bC}{\mathbb{C}}
\newcommand{\mf}{\mathfrak}
\newtheorem{notation}[lemma]{Notation}

\newcommand{\Cat}{\operatorname{Cat_{\infty}^{perf}}}
\newcommand{\perf}{\operatorname{perf}}

\newcommand{\GNS}{\operatorname{GNSch}}
\newcommand{\Catsat}{\operatorname{Cat_{\infty,sat}^{perf}}}
\newcommand{\Catcsat}{\operatorname{Cat_{\infty,sat}^{c,perf}}}

\newcommand{\mS}{\mathfrak{S}}
\newcommand{\mStw}{\mathfrak{S}^{(2)}}
\newcommand{\mSth}{\mathfrak{S}^{(3)}}
\newcommand{\mSsttw}{\mathfrak{S}_{\operatorname{st}}^{(2)}}
\newcommand{\mSstth}{\mathfrak{S}_{\operatorname{st}}^{(3)}}

\newcommand{\Sp}{\operatorname{Sp}}
\newcommand{\mm}{\mathfrak{m}}
\newcommand{\nn}{\mathfrak{n}}

\newcommand{\cry}{\operatorname{cry}}
\numberwithin{equation}{section}

\newcommand{\ulMP}{\operatorname{\mathbf{\underline{M}Sm}}}
\newcommand{\ulMNS}{\operatorname{\mathbf{\underline{M}NS}}}
\newcommand{\ulMPS}{\operatorname{\mathbf{\underline{M}PS}}}
\newcommand{\ulMPST}{\operatorname{\mathbf{\underline{M}PST}}}
\newcommand{\ulMNST}{\operatorname{\mathbf{\underline{M}NST}}}
\newcommand{\ulMCor}{\operatorname{\mathbf{\underline{M}Cor}}}
\newcommand{\ulMSm}{\operatorname{\mathbf{\underline{M}Sm}}}
\newcommand{\ulPSCH}{\operatorname{\mathbf{\underline{P}Sch}}}
\newcommand{\ulMSCH}{\operatorname{\mathbf{\underline{M}Sch}}}
\newcommand{\ulPVar}{\operatorname{\mathbf{\underline{P}Var}}}
\newcommand{\ulMVar}{\operatorname{\mathbf{\underline{M}Var}}}
\newcommand{\ulMVarCor}{\operatorname{\mathbf{\underline{M}VarCor}}}
\newcommand{\ulMSmCor}{\operatorname{\mathbf{\underline{M}SmCor}}}
\newcommand{\ulPSm}{\operatorname{\mathbf{\underline{P}Sm}}}

\renewcommand{\lim}{\operatornamewithlimits{\varprojlim}}
\newcommand{\colim}{\operatornamewithlimits{\varinjlim}}
\newcommand{\hocolim}{\operatorname{hocolim}}
\newcommand{\ol}{\overline}
\newcommand{\holim}{\operatorname{holim}}

\newcommand{\Bcry}{B_{\operatorname{crys}}}
\newcommand{\Bst}{B_{\operatorname{st}}}
\newcommand{\Acry}{A_{\operatorname{cry}}}
\newcommand{\Ast}{A_{\operatorname{st}}}
\newcommand{\Ainf}{A_{\operatorname{inf}}}
\newcommand{\Ainfu}{A_{\operatorname{inf}}[\frac{1}{[\epsilon]-1}]}
\newcommand{\Ainfmu}{A_{\operatorname{inf}}[\frac{1}{ \mu }]}

\newcommand{\bcube}{{\ol{\square}}}
\newcommand{\cube}{\square}
\newcommand{\MpMnZ}{\ol{M},\Minf + n\ol{Z}}
\newcommand{\MpNnZ}{\ol{N},N^\inf + nf^{-1}\ol{Z}}
\def\bN{\mathbb{N}}
\def\bZ{\mathbb{Z}}
\def\bQ{\mathbb{Q}}
\def\Ztr{\bZ_\tr}
\def\Qtr{\bQ_\tr}
\def\soO{\overline{\sO}}
\def\Image{\mathrm{Image}}
\def\rC#1{rel\sC(#1)}
\def\pb{\overline{p}}
\def\Xd#1{X_{(#1)}}
\def\Xbd#1{\Xb_{(#1)}}
\def\Ud#1{U_{(#1)}}
\def\isom{\overset{\cong}{\longrightarrow}}

\newcommand{\MpM}{(\ol{M},\Minf)}
\newcommand{\MpZ}{(\ol{Z},\Zinf)}
\newcommand{\ulomega}{\underline{\omega}}

\newcommand{\THH}{\operatorname{THH}}
\newcommand{\TP}{\operatorname{TP}}
\newcommand{\TC}{\operatorname{TC}}
\newcommand{\TCn}{\operatorname{TC}^{-}}
\newcommand{\HP}{\operatorname{HP}}

\def\Zinf{Z^\infty}
\def\Xinf{X^\infty}
\def\Minf{M^{\infty}}
\newcommand{\MCorls}{{\MCor}_{\text{ls}}}
\newcommand{\ulMCorls}{{\ulMCor}_{\text{ls}}}
\newcommand{\ulMSmls}{{\ulMSm}_{\text{ls}}}
\newcommand{\ulMSmlsCor}{{\ulMSm}_{\text{ls}}\Cor}
\newcommand{\udo}{\underline{\omega}}
\newcommand{\bS}{\mathbb{S}}
\newcommand{\LK}{{L_{K(1)}}}

\author{Keiho Matsumoto}

\begin{abstract}
In this paper, we study $p$-adic Hodge theory for non-commutative algebraic varieties. Firstly, we propose a conjecture that for $K$ a complete discretely valued nonarchimedean extension $K$ of $\mathbb{Q}_p$ with perfect residue field $k$ and $\mathcal{T}$ an $\mathcal{O}_K$-linear idempotent-complete, small smooth proper stable $\infty$-category, there exists a $B_{\mathrm{crys}}$-coefficients isomorphism preserving additional structures between the $K(1)$-local $K$-theory on the generic fiber of $\mathcal{T}$ and the topological periodic cyclic homology on the special fiber. This conjecture can be regarded as a non-commutative analog of the crystalline comparison theorem. We then proceed to prove the following results: the topological negative cyclic homology $\pi_i\TCn(\sT/\bS[z];\Z_p)$ admits a Breuil-Kisin module structure, and the non-commutative analog of Bhatt-Morrow-Scholze’s comparison theorems holds. Additionally, we demonstrate that the $\mathbb{Z}_p[G_K]$-module obtained from the topological negative cyclic homology is a $\mathbb{Z}_p$-lattice of a crystalline representation. Finally, we show that when the generic fiber of $\mathcal{T}$ admits a geometric realization in the sense of Orlov, the non-commutative analog of the crystalline comparison theorem proposed by the author holds.
\end{abstract}

\maketitle


    \section{Introduction}

    The aim of this paper is to study $p$-adic Hodge theory for non-commutative algebraic varieties. Specifically, we propose a conjecture and prove several results related to the relationship between $K(1)$-local $K$-theory and topological periodic homology in this non-commutative setting.

\begin{notation}
Fix a prime $p$. Let $K$ be a complete discretely valued nonarchimedean extension $K$ of $\mathbb{Q}_p$ with perfect residue
field $k$. Here, $\sO_K$ is the ring of integers of $K$ and $\pi\in \sO_K$ is a uniformizer. We write $\Cu$ to denote the completion $\hat{\overline{K}}$ of $\overline{K}$ endowed with its unique absolute value extending the given absolute value on $K$, let $W$ be the Witt ring of $k$, and let $K_0$ be the fraction field of $W$. Let $\mf{m}$ be the maximal ideal of $\sO_K$. For a spectrum $S$, let $L_{K(1)}S$ be the Bousfield localization of complex $K$ theory at prime $p$.
\end{notation}

Cohomology theories such as de Rham cohomology $R\Gamma_{\text{dR}}(-/K)$, Hodge cohomology $R\Gamma_{\Zar}(-,\Omega^*_{-/K})$, $l$-adic cohomology $R\Gamma_{\et}(-,\Z_l)$ and crystalline cohomology $R\Gamma_{\text{cry}}(-/W(k))$ are important tools in the study of algebraic geometry and arithmetic geometry. On the other hand, homological invariants such as (topological) periodic homology, (topological) cyclic homology, and $K$ theory play an important role in the study of $C^*$-algebra, Lie algebra and non-commutative geometry. There are deep and subtle links between cohomology invariants and homological invariants. One of the most well-known examples is the Atiyah-Hirzebruch spectral sequence. For a finite-dimensional CW-complex $M$, Atiyah-Hirzebruch \cite{AHss} prove that there is a spectral sequence:
\begin{equation}\label{AHsss}
E_2^{i,j}=\left\{
\begin{array}{ll}
H^{i}_{\text{Sing}}(M,\Z) & j \text{ even} \\
0 & j \text{ odd} 
\end{array}
\right. \Longrightarrow K^{\text{top}}_{j-i}(M).
\end{equation}
The Thomason spectral sequence is an arithmetic-geometrical analogue
of the Atiyah-Hirzebruch spectral sequence. For a smooth variety $X$ over a field of characteristic $0$, Thomason \cite{Thomason} shows that there exists a spectral sequence
\begin{equation}\label{bakido}
E_2^{i,j}=\left\{
\begin{array}{ll}
H^{i}_\et(X,\Z_p(l)) & j = 2l \\
0 & j \text{ odd} 
\end{array}
\right. \Longrightarrow \pi_{j-i} L_{K(1)}K(X).
\end{equation} 
and this spectral sequence degenerates after tensoring $\mathbb{Q}_p$. Besides, Hesselholt \cite{Hes96} shows a close relation between $p$-adic cohomology theory and topological cyclic homology. 

Bondal-Kapranov, Orlov \cite{noncommutativescheme} and Kontsevich \cite{KOntsevichnoncommutative} introduce \textit{non-commutative algebraic geometry} in which a dg-category (or a stable $\infty$-category) is studied as a \textit{non-commutative space}. Nowadays, non-commutative algebraic geometry plays an important role in research on mirror symmetry, mathematical physics and algebraic geometry. Besides, homological invariants are well-defined for dg-categories and stable $\infty$-categories. It has been known that some comparison theorems between cohomology groups can be formulated naturally for dg-categories and stable $\infty$-categories: instead of cohomology theories, one can consider homological invariants. For a smooth proper dg-category $\sT$ over $\mathbb{C}$, Kaledin \cite{Kaledin} proves that there is an isomorphism $\text{HP}_n(\sT/\mathbb{C}) \simeq \bigoplus_{i\in \Z} \text{HH}_{n+2i}(\sT/\mathbb{C})$, which has been conjectured by Kontsevich-Soibelman \cite{KontseS} and can be regarded as a \textit{non-commutative Hodge decomposition} via Connes \cite{Conne}, Feigin–Tsygan \cite{Feigin} and Hochschild-Kostant-Rosenberg \cite{HKR}. Besides, Blanc \cite{Blanc} conjectured that there is an equivalence $\operatorname{Ch}^{\operatorname{top}}\wedge_{\bS}H\bC:K_{\operatorname{top}}(\sT)\wedge_{\bS}H\bC \to \HP(\sT/\bC)$, which can be regarded as the \textit{non-commutative de Rham comparison theorem}, and in
some cases, the equivalence is proved by A. Kahn \cite{khan2023lattice}. For a smooth proper stable $\infty$-category $\sT$ over $W$, Scholze proves that there is an isomorphism of $W$-modules $\pi_n \TP(\sT_{k};\Z_p) \simeq \pi_i \HP(\sT/W)$ (it has been unpublished yet), and Petrov-Vologodsky obtain the same result for any stable $\infty$-category \cite{petrov}.

In the study of $p$-adic cohomology theories, crystalline comparison theory \cite{Tsujicrys}, \cite{Falcrys} states that for a smooth proper variety $X$ over $\sO_K$, the $p$-adic \'etale cohomology $H^i_\et(X_\sC,\Q_p)\otimes_{\Q_p}\Bcry$ is isomorphic to $H^i_{cry}(X_k/W)\otimes_{W}\Bcry$, and the isomorphism is compatible with $G_K$-action, Frobenius endomorphism and filtration. We study a non-commutative version of the crystalline comparison theorem. For a commutative ring $R$, we will refer to $R$-linear idempotent-complete, small stable $\infty$-categories simply as $R$-linear categories. For an $\sO_{K}$-linear category $\sT$, $G_K$ acts continuously on $\Z_p$-module $\pi_i \LK K(\sT_\sC)$, and there is a Frobenius operator $\text{Fr}:\pi_i\TP(\sT_k;\Z_p)[\frac{1}{p}] \overset{\can}{\simeq}\pi_i\TCn(\sT_k;\Z_p)[\frac{1}{p}] \overset{\varphi}{\to}\pi_i\TP(\sT_k;\Z_p)[\frac{1}{p}]$ (see \cite{KunnethTP}). Inspired by Petrov and Vologodsky's work on non-commutative crystalline cohomology theory \cite{petrov}, and the study of motivic filtration of the $K(1)$-local $K$ theory \cite{Thomason} and the topological periodic homology \cite{BMS2}, we predict the following conjecture.
\begin{conj}[Non-commutative version of crystalline comparison theorem~\cite{Tsujicrys}, \cite{Falcrys}] \label{NCcryconj}
Let $\sT$ be a smooth proper $\sO_K$-linear category. Then there is an isomorphism of $\Bcry$-module:
\[
\pi_i\TP(\sT_k;\Z_p)\otimes_{W} \Bcry \simeq \pi_i \LK K(\sT_\sC)\otimes_{\Z_p}\Bcry
\]
which is compatible with $G_K$-action and Frobenius endomorphism. In particular, the $p$-adic representation $\pi_i\LK K(\sT_\sC)\otimes_{\Z_p}\Q_p$ is crystalline.
\end{conj}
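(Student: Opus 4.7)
The plan is to adapt the Bhatt--Morrow--Scholze strategy for $p$-adic cohomology of smooth proper schemes to the non-commutative setting, with $\TCn(\sT/\bS[z];\Z_p)$ playing the role of Breuil--Kisin cohomology. The proof splits into three stages: first, construct a Breuil--Kisin module structure on $M := \pi_i\TCn(\sT/\bS[z];\Z_p)$; second, prove a $K$-theory analogue of the BMS comparison theorems that relate $M$ to both the special-fibre invariant $\pi_i\TP(\sT_k;\Z_p)$ and the generic-fibre invariant $\pi_i\LK K(\sT_\sC)$; third, invoke Gao's Breuil--Kisin $G_K$-module framework together with Du--Liu's $(\varphi,\hat G)$-module theory to identify $T_{\Ainf}(M^\vee)$ as a $\Z_p$-lattice in a crystalline representation, and then extend scalars to $\Bcry$.

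For the first stage I would leverage the Nikolaus--Scholze formalism of cyclotomic spectra relative to the base $\bS[z]$. After $p$-completion the coefficient ring becomes $W[\![z]\!] = \mathfrak{S}$, and the cyclotomic Frobenius produces a semilinear map $\varphi:M\to M$. The smoothness and properness of $\sT$ should force $M$ to be a perfect $\mathfrak{S}$-module of finite $E$-height, where $E$ is the Eisenstein polynomial of the uniformizer $\pi$, and hence a Breuil--Kisin module in the strong sense $\varphi^*M[1/E]\iso M[1/E]$.

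For the second stage, the comparison with the special fibre follows by base change along $\mathfrak{S}\to W$, $z\mapsto 0$, combined with the Scholze / Petrov--Vologodsky identification $\pi_i\TP(\sT_k;\Z_p)\simeq \pi_i\HP(\sT/W)$. The comparison with the generic fibre is obtained by base change along $\mathfrak{S}\to \Ainf$, together with a $K(1)$-local Thomason-type spectral sequence that recovers $\LK K(\sT_\sC)$ from étale realizations; this is exactly where the hypothesis that the generic fibre of $\sT$ admits a geometric realization in the sense of Orlov is used, so as to reduce the required identification of $K$-theoretic invariants with Galois cohomology to the commutative case already handled by Thomason and Faltings--Tsuji.

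In the third stage, Gao's theory upgrades $M$ to a Breuil--Kisin $G_K$-module once the compatibility between the Frobenius-semilinear structure on the special fibre side and the continuous $G_K$-action on the generic fibre side is established, and Du--Liu's $(\varphi,\hat G)$-criterion then detects crystallinity. Extending scalars from $\Ainf$ to $\Bcry$ converts the resulting Breuil--Kisin $G_K$-module into a filtered $\varphi$-module whose associated representation is $\pi_i\LK K(\sT_\sC)\otimes_{\Z_p}\Q_p$, and whose underlying $W$-module is $\pi_i\TP(\sT_k;\Z_p)$ after inverting $p$; this yields the desired comparison compatible with $G_K$-action and Frobenius. The main obstacle will be verifying in stage three that the Breuil--Kisin module coming from $\TCn$ satisfies the effectivity, finite-height, and $\hat G$-compatibility hypotheses of Gao's and Du--Liu's theorems, and in particular matching the $G_K$-action transported from $\LK K(\sT_\sC)$ with the one predicted by descent along $\mathfrak{S}\to\Ainf$; the Orlov-type geometric realization assumption enters precisely at this point to make the comparison with étale cohomology rigorous.
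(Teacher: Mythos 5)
Your outline reproduces the paper's architecture at the level of headings, but in one place it diverges in a way that would not go through as stated, and the decisive step is left as an acknowledged ``obstacle'' with no idea supplied for it.

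For the generic-fibre comparison you propose a ``$K(1)$-local Thomason-type spectral sequence'' and a reduction ``to the commutative case already handled by Thomason and Faltings--Tsuji.'' The paper does not argue this way, and for an admissible subcategory $\sT_{\Cu}\subset\perf(X)$ there is no direct d\'evissage of $\LK K(\sT_{\Cu})$ to \'etale cohomology of $X$. What the paper actually does is prove a K\"unneth formula for $\LK K$ of categories admitting a geometric realization (Proposition~\ref{geomKunnethprop}; the geometric hypothesis enters only there, via localization sequences and the retract property of admissible subcategories), and then compares $\LK K(\sT_\Cu)$ with $\LK\TP(\sT_{\sO_\Cu};\Z_p)$ by the cyclotomic trace together with the rigidity statement for symmetric monoidal natural transformations of \cite[Proposition 4.6]{KunnethTP}, the key computational input being that the trace sends the Bott element $\beta$ to a unit multiple of $([\epsilon]-1)\sigma$ and that $\Z_p[\beta^{\pm}]\to\Ainf[\frac{1}{[\epsilon]-1}]^{\wedge}_p[\sigma^{\pm}]$ is flat.

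More seriously, the step you defer --- verifying the $\hat G$-compatibility needed for Du--Liu's criterion --- is precisely the theorem: Gao's formalism by itself only yields \emph{semi-stability} (Theorem~\ref{mainwithproof}), and upgrading to crystallinity is the entire content of Section~\ref{sec4}. The paper must show that the matrix of the generator $\tilde\tau$ acting on $\pi_i\TCn(\sT_{\sO_\Cu};\Z_p)^{\vee}$, in a basis descended from $\pi_i\TCn(\sT/\bS[z];\Z_p)^{\vee}$, has entries in the subring $\mStw\subset\Ainf$. This is achieved by introducing the two-variable theories $\TP(\sT/\bS[z_0,z_1];\Z_p)$, invoking Liu--Wang's computation $\pi_0\TP(\sO_K/\bS[z_0,z_1];\Z_p)\simeq\mStw$, and realizing $\tilde\tau$ through the map $\bS[z_0^{1/p^{\infty}},z_1^{1/p^{\infty}}]\to\sO_\Cu$ sending $z_1^{1/p^n}$ to $\zeta_n\pi^{1/p^n}$ (Propositions~\ref{4.10} and~\ref{prop416}, plus the integrality Lemma~\ref{Lem415}); nothing in your proposal substitutes for this construction. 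Two smaller omissions: Gao's and Du--Liu's theories apply to \emph{finite free} Breuil--Kisin modules of finite $E$-height, which is why the paper works with the dual $\pi_i\TCn(\sT/\bS[z];\Z_p)^{\vee}$ rather than with $M$ itself; and the Breuil--Kisin structure exists only for $i\geq n$, so the statement for all $i$ still needs the closing periodicity-and-Tate-twist argument of Theorem~\ref{mainmainmain1}.
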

\begin{remark}
Assume $\sO_K=W$.
Then the isomorphism of $W$-modules
\[
\pi_n \TP(\sT_{k};\Z_p)\;\xrightarrow{\;\sim\;}\;\pi_n \HP(\sT/W)
\]
has been proved independently by Scholze (unpublished) and by Petrov–Vologodsky \cite{petrov}
for $p>2$. 
For all primes $p$ and for smooth proper dg-categories, this result 
has since been established by Devalapurkar–Raksit~\cite{Raksit} and Mao~\cite{Mao}. 
Consequently, the non-commutative Hodge--de~Rham filtration on $\pi_n\HP(\sT/W)$ 
induces a natural filtration on $\TP(\sT_{k};\Z_p)$. 
It is further expected that for a general complete DVR $\sO_K$ of mixed characteristic, 
the base change
\[
\pi_n\TP(\sT_{k};\Z_p)\otimes_W K
\]
inherits a filtration arising from the non-commutative Hodge--de~Rham filtration on 
$\pi_n\HP(\sT/\sO_K)$. 
Moreover, we expect the isomorphism in Conjecture~1.2 to be compatible with these filtrations.
\end{remark}

For a smooth proper variety $X$ over $\sO_K$, by \cite{Thomason} one has a $G_K$-equivariant isomorphism $\pi_i\LK K(\sT_\sC)\otimes_{\Z_p} \Q_p \simeq \bigoplus_{n\in\Z} H^{i+2n}_{\et}(X_{\Cu},\Z_p(n))\otimes_{\Z_p}\Q_p$, and similarly by \cite{BMS1} one has an isomorphism $\pi_i\TP(\sT_k;\Z_p)[\frac{1}{p}] \simeq \bigoplus_{n\in\Z} H^{i+2n}_{\cry}(X_{k}/W[\frac{1}{p}])(n)$ of isocrystals, thus the conjecture holds for $\sT=\perf(X)$ via crystalline comparison theorem~\cite{Tsujicrys}. In this paper, we approach this conjecture via $K$-theoritical version of Bhatt-Morrow-Scholze's comparison theorem \cite{BMS2}. We will use the language of stable $\infty$-categories, following Lurie \cite{Lurie17}.
\begin{defn}
 For an $\mathbb{E}_\infty$-ring $R$, we let $\Cat (R)$ denote the $\infty$-category of $R$-linear categories, where the morphisms are exact functors.
\end{defn}

\begin{defn}
For an $\mathbb{E}_\infty$-ring $R$, we let $\Catsat (R)$ denote the $\infty$-category of smooth proper $R$-linear categories, where the morphisms are exact functors. 
\end{defn}
\begin{defn}[{See also {\cite[section~4.1]{noncommutativescheme}}}]\label{geomreal}
    For an $\mathbb{E}_\infty$-ring $R$, an $R$-linear category $\sT$ \textit{admits 
a geometric realization} if there is a derived scheme $X$ over $R$ such that the truncation $\pi_0(X)$ is separated scheme of finite type over $\pi_0(R)$ and there is a fully faithful admissible inclusion $\sT \subset \perf(X)$ of $R$-linear categories (see \cite[Definition 3.1]{admissiblesub}). 
\end{defn} 
In many cases, a stable $\infty$-category is known to admit a geometric realization. For example, the derived Fukaya category of a symplectic manifold is known or expected to admit a geometric realization from the study of mirror symmetry. Besides, Lunts-Bergh-Schn\"urer \cite{geometricity} proved that the stable infinity category of perfect complexes on a smooth proper Deligne-Mumford admits a geometric realization.
\begin{remark}
Assume $R$ is an algebraic closed field of characteristic $0$. In \cite[Question 4.4]{noncommutativescheme}, Orlov asked if there exists $R$-linear idempotent-complete, small smooth proper stable $\infty$-categories which don't admit a geometric realization. This is still an important open problem. 
\end{remark}

\begin{remark}
 If a smooth proper $R$-linear category $\sT$ admits a geometric realization $\sT \hookrightarrow \perf(X)$, the dual $\sT^{\text{op}} \in \Catsat$ also admits a geometric realization $\sT^{\text{op}} \hookrightarrow \perf(X)^{\text{op}} =\perf(X)$.
\end{remark}

 Fix a sequence of elements $\zeta_n\in \ol{K}$ inductively such that $\zeta_0=1$ and $(\zeta_{n+1})^p=\zeta_n$, let $\epsilon=(\zeta_0,\zeta_1,\zeta_2,...)\in \lim_{\text{Frob}}\sO_\Cu/p$, and let $[\epsilon]\in \Ainf$ be the Teichm\"uller lift. Let $\mS=W[[z]]$, and let $\tilde{\theta}:\mS \to \sO_K$ be the usual map whose kernel is generated by Eisenstein polynomial $E$ of $\pi$. Let $\phi:\mS \to \Ainf$ be the $W$-linear map that sends $z$ to $[\pi^\flat]$. We write $\xi= \phi(E)$. We write $\mu=[\epsilon]-1$. Let $\varphi:\mS \to \mS$ be a Frobenius endomorphism which is Frobenius on $W$ and sends $z$ to $z^p$. We prove a non-commutative version of Bhatt-Morrow-Scholze's Breuil-Kisin cohomology theory $R\Gamma_{\mS}(-)$ \cite{BMS2}. 

\begin{thm}[{Theorem~\ref{NCBMS}}, {Non-commutative version of \cite{BMS2}}]\label{NCBMSintro}
  Let $\sT$ be an $\sO_K$-linear smooth proper category. Then there is a natural number $n$ such that the following holds:
 \begin{itemize}
   \item[(1)] For any $i\geq n$, $\pi_i \TCn(\sT/\bS[z];\Z_p)$ has a natural structure of a Breuil-Kisin module.
   \item[(1)'] For any $i\geq n$, $\pi_i \TCn(\sT/\bS[z];\Z_p)^{\vee}$ has a natural structure of a Breuil-Kisin module of finite $E$-height.
   \item[(2)] (K(1)-local K theory comparison) Assume $\sT_{\Cu}$ admits a geometric realization. For any $i\geq n$, after scalar extension along $\ol{\phi}:\mS\to \Ainf$ which sends $z$ to $[\pi^{\flat}]^p$ and is the Frobenius on $W$, one recovers K(1)-local K theory of the generic fiber
   \[
   \pi_i \TCn(\sT/\bS[z];\Z_p) \otimes_{\mS,\ol{\phi}} \Ainf[\frac{1}{\mu}]^{\wedge}_p \simeq \pi_i\LK K(\sT_{\Cu})\otimes_{\Z_p} \Ainf[\frac{1}{\mu}]^{\wedge}_p
   \]
   \item[(3)] (topological periodic homology theory comparison) For any $i\geq n$, after scalar extension along the map $\tilde{\phi}:\mS\to W$ which is the Frobenius on $W$ and sends $z$ to $0$, one recovers topological periodic homology theory of the special fibre:
  \[
   \pi_i \TCn(\sT/\bS[z];\Z_p)[\frac{1}{p}] \otimes_{\mS[\frac{1}{p}],\tilde{\phi}} K_0 \simeq \pi_i \TP(\sT_k;\Z_p)[\frac{1}{p}].
   \]
  \end{itemize}
\end{thm}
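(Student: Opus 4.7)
The plan is to mimic the strategy of \cite{BMS2} in the stable $\infty$-categorical setting, using as inputs the cyclotomic Frobenius on $\TCn$ and the finiteness properties of smooth proper categories. The $\mS$-module structure on $\pi_i \TCn(\sT/\bS[z];\Z_p)$ descends from the identification of $\pi_{*}\TCn(\sO_K/\bS[z];\Z_p)$ with $\mS$ in an appropriate range, and the Frobenius-linear endomorphism $\varphi$ comes from the cyclotomic Frobenius of $\TCn$, compatibly with $\varphi:\mS\to\mS$. These pieces are essentially formal from the definitions.

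For part (1), the substantive task is to show finite generation of $\pi_i \TCn(\sT/\bS[z];\Z_p)$ over $\mS$ for $i$ above some $n$, and the Breuil-Kisin property that $\varphi$ becomes an isomorphism after inverting $E$. I would deduce finite generation from the fact that smooth proper $\sT$ is dualizable in $\Catsat(\sO_K)$; combined with a Künneth formula for $\TCn$ and an analysis of $\TCn(\sO_K/\bS[z];\Z_p)$, this should force perfect-module behaviour of the homotopy groups in large degree. For part (1)', dualizability of $\sT$ translates into dualizability of the associated Breuil-Kisin module, and smoothness yields the finite $E$-height after verifying that the linearization of $\varphi$ becomes invertible after inverting $E$.

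For part (3), base change along $\tilde{\phi}:\mS\to W$ sending $z\mapsto 0$ factors through the specialization map $\TCn(\sT/\bS[z];\Z_p)\to\TCn(\sT_k/\bS;\Z_p)$ followed by the canonical comparison to $\TP(\sT_k;\Z_p)$, which becomes an equivalence after inverting $p$. Combined with Scholze's and Petrov--Vologodsky's identification $\pi_i \TP(\sT_k;\Z_p)\simeq \pi_i \HP(\sT/W)$ rationally, this step should reduce to formal properties of base change along $\bS[z]\to \bS$, $z\mapsto 0$, and the Tate-construction collapse.

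For part (2), I would use the geometric realization $\sT_\Cu \hookrightarrow \perf(X_\Cu)$ to reduce to the scheme case via admissibility, where the desired identification follows from the $K$-theoretic étale comparison in \cite{BMS2} together with Thomason's descent spectral sequence. The main obstacle, in my view, lies in part (1)/(1)': establishing finite generation and finite $E$-height intrinsically in the non-commutative setting. For schemes these are controlled by the Nygaard filtration and its étale/prismatic comparison; in the stable $\infty$-categorical context one must substitute dualizability together with Künneth, possibly supplemented by Petrov--Vologodsky's non-commutative crystalline cohomology, as the replacement for geometric finiteness. Once this finiteness and Frobenius structure are in place, the comparisons (2) and (3) follow more formally via base change.
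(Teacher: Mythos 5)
Your outline for parts (1)/(1') matches the paper's strategy (dualizability of $\sT$, Künneth for $\TCn$, and the computation of $\TCn(\sO_K/\bS[z];\Z_p)$), but you have not named the lemma that actually makes every comparison map an equivalence: the Antieau--Mathew--Nikolaus rigidity statement that a symmetric monoidal natural transformation between symmetric monoidal functors out of a category all of whose objects are dualizable is automatically an equivalence. The paper applies this repeatedly (to $\tilde{\varphi}^{hS^1}$, to $\can$ after inverting $E$, and to all base-change maps), together with the fact that $\pi_*\THH(\sO_K/\bS[z];\Z_p)\simeq\sO_K[u]$ is regular noetherian and even, so dualizable objects are perfect and Künneth holds on homotopy. "Formal properties of base change" alone do not give you these equivalences.

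The serious gap is in part (2). You propose to "reduce to the scheme case via admissibility," but the hypothesis is only that $\sT_{\Cu}$ admits a geometric realization; there is no integral model $X/\sO_K$ with $\sT\subset\perf(X)$, so the Breuil--Kisin side $\pi_i\TCn(\sT/\bS[z];\Z_p)\otimes_{\mS,\ol{\phi}}\Ainf[\frac{1}{\mu}]^{\wedge}_p$ cannot be read off from the étale comparison of \cite{BMS2} for any scheme. The paper instead proves the two halves separately and intrinsically: first a $G_{K_\infty}$-equivariant isomorphism $\pi_i\TCn(\sT/\bS[z];\Z_p)\otimes_{\mS,\ol{\phi}}\Ainf\simeq\pi_i\TP(\sT_{\sO_\Cu};\Z_p)$ (Theorem~\ref{TPTCcomp}, no geometricity needed), and then an identification $\LK K(\sT_\Cu)\otimes_{\LK K(\Cu)}\LK\TP(\sO_\Cu;\Z_p)\simeq\LK\TP(\sT_{\sO_\Cu};\Z_p)$ via the cyclotomic trace (Theorem~\ref{KTCncomp}); the geometric realization enters only to establish the Künneth formula for $\LK K$ of $\Cu$-linear categories (via Thomason, localization sequences, and Dundas--Goodwillie--McCarthy), which is what lets the rigidity lemma apply on the $K$-theory side. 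Finally, in part (3) you gloss over the fact that $\mS\to W$, $z\mapsto 0$, is not flat: the spectrum-level base-change equivalence does not pass to homotopy groups without knowing that $\pi_*\TP(\sT/\bS[z];\Z_p)[\frac{1}{p}]$ is finite free over $\mS[\frac{1}{p}]$, which the paper extracts from the Breuil--Kisin module structure of (1); the appeal to Petrov--Vologodsky's $\TP\simeq\HP$ is neither needed nor sufficient for this.
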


  Bhatt-Morrow-Scholze's Breuil-Kisin cohomology theory \cite[Theorem~1.2]{BMS2} implies crystalline comparison theorem. On the other hand, Theorem~\ref{NCBMSintro} does not imply Conjecture~\ref{NCcryconj}. This difference arises as follows. On Breuil-Kisin cohomology theory, there is a $G_K$-equivariant isomorphism (see \cite[Theorem 1.8 (iii)]{BMS1})
  \begin{equation}\label{diffCNC}
  R\Gamma_{\Ainf}(X_{\sO_\Cu})\otimes_{\Ainf} \Acry \simeq R\Gamma_{\cry}(X_{\sO_\Cu/p}/\Acry).
  \end{equation}
Combine \eqref{diffCNC} with \cite[Theorem~1.8 (iv)]{BMS1}, there is canonical $(G_K,\varphi)$-equivariant isomorphism 
\begin{equation}\label{BMSmulocal}
R\Gamma_{\et}(X_{\Cu},\Z_p)\otimes_{\Z_p} \Acry[\frac{1}{p\mu}] \simeq R\Gamma_{\cry}(X_{\sO_\Cu/p}/\Acry)[\frac{1}{p\mu}].
\end{equation}
The isomorphism induces the crystalline comparison theorem (see \cite[Theorem 14.4]{BMS1}). On the other hand, the non-commutative analogue of \eqref{diffCNC} becomes the following $G_K$-equivariant isomorphism
 \begin{equation}\label{diffCNC2}
 \pi_i \TP(\sT_{\sO_\Cu};\Z_p)\otimes_{\Ainf} \widehat{\Acry} \simeq \pi_i\TP(\sT_{\sO_\Cu/p};\Z_p),
 \end{equation}
 where $\widehat{\Acry}$ is the completion of $\Acry$ with respect to the Nygaard filtration (see \cite[Definition 8.9]{BMS2}). The problem is that $\mu$ is a $0$-divisor in $\widehat{\Acry}$ (see \cite[Corollary 2.11 and Corollary 2.12]{remarkonK1}). Thus, the non-commutative analogue of \eqref{BMSmulocal} becomes the trivial equation. Therefore we will study $\pi_i\TCn(\sT/\bS[z];\
\Z_p)$ in more detail. In section \ref{sec3}, we will show that the dual Breuil-Kisin module $\pi_i\TCn(\sT/\bS[z];\Z_p)^{\vee}$ admits a Breuil-Kisin $G_K$-module structure in the sense of Gao \cite{Gao}. In section \ref{sec4}, using Du-Liu's work on $(\varphi,\hat{G})$-module \cite{DuLiu},
we will prove the following.

\begin{remark}
As pointed out by the referee, the issue that only the Nygaard-completed $A_{\mathrm{crys}}$ arises naturally from $THH$ has recently been resolved by Mao \cite[Prop.~1.8]{Mao}, who constructs a decompleted version of $\TP(\mathcal{T}_{\sO_{\Cu}/p};\Z_p)$. Although we have not reworked the present arguments in that formalism, we expect that Mao’s approach could simplify some of the proofs. 
\end{remark}

\begin{thm}\label{main}
Let $\sT$ be a smooth and proper $\sO_K$-linear category. Then there exists an integer $n\ge 0$ such that, for all $i\ge n$, the following statements hold:
\begin{itemize}


 \item[(1)] The $\Z_p[G_K]$-module $T_{\Ainf}(\pi_i \TCn(\sT/\bS[z];\Z_p)^{\vee})$ is a $\Z_p$-lattice of a crystalline representation, and there is an isomorphism of $\Bcry$-module:
\[
\pi_i\TP(\sT_k;\Z_p)^{\vee}\otimes_{W} \Bcry \simeq T_{\Ainf}(\pi_i \TCn(\sT/\bS[z];\Z_p)^{\vee})\otimes_{\Z_p}\Bcry
\]
which is compatible with $G_K$-action and Frobenius endomorphism.

 \item[(2)] If $\sT_{\Cu}$ admits a geometric realization, then there is a $G_K$-equivariant isomorphism 
 \[
 T_{\Ainf}(\pi_i \TCn(\sT/\bS[z];\Z_p)^{\vee}) \simeq \pi_i\LK K(\sT_\Cu)^{\vee}
 \]
 of $\Z_p$-modules.
\end{itemize}
\end{thm}

We prove the following as a corollary.

\begin{thm}[Main Theorem, Theorem~\ref{mainmainmain1}]\label{maininintro}
    Let $\sT$ be a smooth proper $\sO_K$-linear category. If $\sT_{\Cu}$ admits a geometric realization, then Conjecture~\ref{NCcryconj} holds for $\sT$.
\end{thm}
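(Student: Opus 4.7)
The plan is to deduce the Conjecture by combining Theorem~\ref{main} with Theorem~\ref{NCBMSintro}(3), interpolated through the tautological $\Bcry$-reconstruction of crystalline representations.

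Write $M:=\pi_i\TCn(\sT/\bS[z];\Z_p)$. Theorem~\ref{main}(1) says that $V_0:=T_{\Ainf}(M^{\vee})$ is a $\Z_p$-lattice in a crystalline $G_K$-representation. Under the geometric realization hypothesis, Theorem~\ref{main}(2) produces a $G_K$-equivariant identification $V_0\simeq \pi_i\LK K(\sT_\Cu)^{\vee}$. Dualizing and inverting $p$ shows that the $\Q_p[G_K]$-module $W:=\pi_i\LK K(\sT_\Cu)\otimes_{\Z_p}\Q_p$ is itself crystalline; in particular Fontaine's tautological reconstruction
\[
W\otimes_{\Q_p}\Bcry \;\simeq\; D_{\cry}(W)\otimes_{K_0}\Bcry
\]
holds $G_K$-equivariantly and $\varphi$-equivariantly.

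It remains to identify the $F$-isocrystal $D_{\cry}(W)$ with $\pi_i\TP(\sT_k;\Z_p)\otimes_W K_0$. For this, the Breuil-Kisin module $M$ serves as the bridge. By the Kisin/Gao/Du-Liu reconstruction exploited in Sections~\ref{sec3}--\ref{sec4}, for a Breuil-Kisin module of finite $E$-height whose $T_{\Ainf}$ yields a given crystalline representation, the Dieudonn\'e module of that representation is computed by scalar extension along $\tilde\phi:\mS\to W$ followed by inverting $p$, with Frobenius induced from $\varphi$. Applied to $M^{\vee}$ and then dualized back, this identifies
\[
D_{\cry}(W) \;\simeq\; M[1/p]\otimes_{\mS[1/p],\tilde\phi} K_0,
\]
which by Theorem~\ref{NCBMSintro}(3) is exactly $\pi_i\TP(\sT_k;\Z_p)[1/p]$. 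Substituting into the displayed Fontaine isomorphism gives Conjecture~\ref{NCcryconj} for $\sT$.

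The main technical obstacle lies in tracking compatibilities: one must verify that the Frobenius on $M$ arising from $\TCn$-functoriality simultaneously induces (i) the Frobenius on $\pi_i\TP(\sT_k;\Z_p)[1/p]$ of Theorem~\ref{NCBMSintro}(3), (ii) the $G_K$-action on $V_0$ recovered via the $\ol\phi$-extension of Theorem~\ref{NCBMSintro}(2), and (iii) the Dieudonn\'e Frobenius on $D_{\cry}(W)$ dictated by the $\Bcry$-comparison. Reconciling (i)--(iii) amounts to faithfully transporting the $(\varphi,\hat G)$-module structures of Sections~\ref{sec3}--\ref{sec4} through both base changes $\ol\phi$ and $\tilde\phi$, so that dualization commutes with each of the three reconstructions used above.
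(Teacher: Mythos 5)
Your argument for a fixed large $i$ follows essentially the same route as the paper: the $G_K$- and Frobenius-equivariant identification of $\pi_i\LK K(\sT_\Cu)^{\vee\vee}\otimes_{\Z_p}\Bcry$ with $\pi_i\TCn(\sT/\bS[z];\Z_p)^{\vee\vee}\otimes_{\mS}\Bcry$ is Remark~\ref{rem38} (built on Theorem~\ref{mainwithproof} and Lemma~\ref{BKlemma}); the identification of the rational Dieudonn\'e module with the base change along $\tilde\phi:\mS\to W$ is exactly the Kisin-theoretic fact the paper quotes from \cite[Remark 4.5]{BMS1}; and Theorem~\ref{TCnTPcomp3} (i.e.\ Theorem~\ref{NCBMSintro}(3)) converts that base change into $\pi_i\TP(\sT_k;\Z_p)[\frac{1}{p}]$. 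The ``technical obstacle'' you leave open --- reconciling the three Frobenii and the $G_K$-action --- is not an additional problem in the paper's setup: the Frobenius-equivariance of the comparison over $W(\Cu^{\flat})$ is already established in the diagram~\eqref{GKequidiagram} of Theorem~\ref{mainwithproof}, and working with the double dual $(-)^{\vee\vee}$ (rather than dualizing back by hand, which requires finiteness/freeness bookkeeping you do not supply) makes the $\Bcry$-statement follow directly.

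The genuine gap is the range of $i$. Every input you invoke --- Theorem~\ref{main}(1),(2) and Theorem~\ref{NCBMSintro}(3) --- is only valid for $i\geq n$ with $n$ depending on $\sT$, so your argument proves Conjecture~\ref{NCcryconj} only for sufficiently large $i$, whereas the conjecture (and Theorem~\ref{mainmainmain1}) asserts the comparison for every $i$. The paper closes this by Bott periodicity and Tate twisting: $\pi_{i+2}\LK K(\sT_\Cu)\simeq \pi_i\LK K(\sT_\Cu)(1)$ since $\pi_2\LK K(\Cu)\simeq \Z_p(1)$, and $\pi_{i+2}\TP(\sT_k;\Z_p)[\frac{1}{p}]\simeq \pi_i\TP(\sT_k;\Z_p)[\frac{1}{p}](1)$ since $\pi_2\TP(k;\Z_p)[\frac{1}{p}]\simeq W[\frac{1}{p}](1)$ (a Frobenius twist); because twisting by $\Z_p(1)$ preserves crystallinity and is compatible with $-\otimes\Bcry$, the statement for $i\gg 0$ descends to all $i$. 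Without this (or some equivalent) periodicity step, your proposal does not prove the theorem as stated; with it added, it coincides with the paper's proof.
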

\begin{remark}
    In the case $\mathcal{T}$ embeds fully faithfully into $\perf(X)$ with $X$ smooth proper over $\mathcal{O}_K$, Theorem~1.10 can be deduced from the crystalline comparison for smooth proper schemes via a Fourier-Mukai kernel argument. 
\end{remark}


\section{Non-commutative version of Breuil-Kisin cohomology}


\subsection{Breuil-Kisin modules and Breuil-Kisin cohomology theory $R\Gamma_{\mS}$} Let us start by recalling the theory of Breuil–Kisin modules.
\begin{defn}
  A Breuil–Kisin module is a finitely generated $\mS$-module $M$ equipped with a  $\mS$-linear isomorphism
  \[
  \varphi_M:M\otimes_{\mS,\varphi} \mS[\frac{1}{E}] \simeq M[\frac{1}{E}].
  \]
\end{defn}

For a Breuil-Kisin module $\fM$, let us denote $\fM^{\vee}=\Hom_{\mS}(\fM,\mS)$. We note $\fM^{\vee}=\Hom_{\mS}(\fM,\mS)$ is a Breuil-Kisin module whose Frobenius map $\varphi_{\fM^{\vee}}$ is given by 
\begin{equation}\label{dualFrob}
  \fM^\vee \otimes_{\mS,\varphi} \mS[\frac{1}{E}] \simeq \Hom_{\mS[\frac{1}{E}]}(\fM\otimes_{\mS,\varphi} \mS[\frac{1}{E}],\mS[\frac{1}{E}]) \overset{\varphi_{\fM}^\vee}{\simeq} \Hom_{\mS[\frac{1}{E}]}(\fM[\frac{1}{E}],\mS[\frac{1}{E}]) \simeq \fM^\vee[\frac{1}{E}],
\end{equation}
where we use facts that $\varphi:\mS\to\mS$ and $\mS\to \mS[\frac{1}{E}]$ are flat. We note that $\fM^{\vee}$ is finite free $\mS$-module, this follows from the facts that $\text{gl.dim}~\mS=2$.

\begin{lemma}[{\cite[Corollaire 11.1.14]{FarguesFontaine}} and {\cite[Lemma 4.27]{BMS1}}]\label{BKlemma} Let $\ol{\phi}:\mS \to \Ainf$ be the map that sends $z$ to $[\pi^\flat]^p$ and is Frobenius on $W$. Let $M$ be a Breuil–Kisin module, and let $M_{\Ainf}=M\otimes_{\mS,\ol{\phi}}\Ainf$. Then $M'[\frac{1}{p}]=M_{\Ainf}[\frac{1}{p}]\otimes_{\Ainf[\frac{1}{p}]}K_0$ is a finite free $K_0$-module equipped with a Frobenius automorphism. Fix a section $k \to \sO_{\Cu}/p$, then there is a (noncanonical) $\varphi$-equivariant isomorphism
\[
M_{\Ainf}\otimes_{\Ainf}\Bcry \simeq M'[\frac{1}{p}]\otimes_{K_0}\Bcry.
\]
\end{lemma}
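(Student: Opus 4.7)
The plan is to give $M'[\frac{1}{p}]$ a natural Frobenius automorphism using the Breuil--Kisin axioms, and then invoke the cited Fargues--Fontaine classification to descend $M_{\Ainf}\otimes_{\Ainf}\Bcry$ to an isocrystal over $K_0$, which one identifies with $M'[\frac{1}{p}]$.

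First I would observe that the composite $\mS \xrightarrow{\ol{\phi}} \Ainf \twoheadrightarrow W(k) = W$, with the surjection induced by the residue map $\sO_\sC^\flat \twoheadrightarrow k$ (under which $[\pi^\flat] \mapsto 0$), agrees with $\tilde{\phi}$. Hence
\[
M_{\Ainf}[\tfrac{1}{p}] \otimes_{\Ainf[\frac{1}{p}]} K_0 \;=\; M \otimes_{\mS,\tilde{\phi}} K_0.
\]
Since $E$ is Eisenstein for $\pi$, the constant term $\tilde{\phi}(E) = E(0)$ is $p$ times a unit, hence invertible in $K_0$; base-changing the Frobenius isomorphism $\varphi_M \colon M \otimes_{\mS,\varphi} \mS[\tfrac{1}{E}] \simeq M[\tfrac{1}{E}]$ along the resulting map $\mS[\tfrac{1}{E}] \to K_0$ yields a $\varphi$-semilinear automorphism of $M'[\tfrac{1}{p}]$, and finite generation over the field $K_0$ makes it automatically finite free.

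Next I would set up the comparison over $\Bcry$. A section $k \hookrightarrow \sO_\sC/p$ lifts, via $\sO_\sC/p \cong \sO_\sC^\flat / \pi^\flat$, to an embedding $k \hookrightarrow \sO_\sC^\flat$; Witt vector functoriality promotes it to a $\varphi$-equivariant splitting $W \hookrightarrow \Ainf$ of the residue-field projection, and composition with $\Ainf \to \Bcry$ gives a $\varphi$-equivariant embedding $K_0 \hookrightarrow \Bcry$. The two modules $M_{\Ainf}\otimes_{\Ainf}\Bcry$ and $M'[\tfrac{1}{p}]\otimes_{K_0}\Bcry$ are then both finite free $\varphi$-modules over $\Bcry$, whose specialisations along the resulting $\Bcry \to K_0$ both produce the isocrystal $M'[\tfrac{1}{p}]$ constructed above.

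The core step is to apply \cite[Corollaire 11.1.14]{FarguesFontaine}: the base change $D \mapsto D \otimes_{K_0} \Bcry$ from isocrystals over $K_0$ to finite free $\varphi$-modules over $\Bcry$ is an equivalence, with quasi-inverse extracting the underlying isocrystal. Applied to $M_{\Ainf}\otimes_{\Ainf}\Bcry$, this produces an isocrystal $D$ over $K_0$ together with a $\varphi$-equivariant isomorphism $M_{\Ainf}\otimes_{\Ainf}\Bcry \simeq D \otimes_{K_0} \Bcry$; the previous paragraph identifies $D$ with $M'[\tfrac{1}{p}]$, giving the desired isomorphism. The principal obstacle is the Fargues--Fontaine trivialisation itself, i.e.\ producing inside $M_{\Ainf}\otimes_{\Ainf}\Bcry$ a $\varphi$-stable $K_0$-rational subspace of full rank; this rests on the solvability of Frobenius equations $\varphi(x) = \lambda x$ for $\lambda \in \Bcry^\times$, a genuinely transcendental input requiring both $p$ and $\mu$ to be inverted, and is also the source of the non-canonicity of the isomorphism and its dependence on the chosen section.
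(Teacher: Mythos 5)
The paper does not prove this lemma at all: it is quoted verbatim (up to replacing $W(\ol{k})$ by $K_0$) from \cite[Lemma 4.27]{BMS1}, whose proof is exactly the strategy you outline — reduce along the section to an isocrystal and invoke the Fargues--Fontaine classification of $\varphi$-modules. So your skeleton is the intended one; the problems are in how you state the two key inputs.

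First, the $k$ versus $\ol{k}$ confusion produces maps that do not exist. The residue field of $\sO_{\Cu}^{\flat}$ is $\ol{k}$, not $k$, so there is no surjection $\Ainf\twoheadrightarrow W(k)$ and no ring map $\Bcry\to K_0$ along which you could ``specialise''. What is true, and what you need, is that the composite $\mS\xrightarrow{\ol{\phi}}\Ainf\to W(\ol{k})$ kills $z$ and is Frobenius on $W$, hence factors through $W\subset W(\ol{k})$ and equals $\tilde{\phi}$; this gives $M_{\Ainf}\otimes_{\Ainf}W(\ol{k})[\frac{1}{p}]\simeq M'[\frac{1}{p}]\otimes_{K_0}W(\ol{k})[\frac{1}{p}]$, which is also the only sensible reading of the expression $M_{\Ainf}[\frac{1}{p}]\otimes_{\Ainf[\frac{1}{p}]}K_0$ in the statement. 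Likewise the retraction you want is $\Acry\to W(\ol{k})$ (it exists because $\xi\mapsto p$ and $p$ has divided powers in $W(\ol{k})$), giving $\Bcry^{+}\to W(\ol{k})[\frac{1}{p}]$, not a map to $K_0$.

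Second, the Fargues--Fontaine input is misquoted: base change from isocrystals over $K_0$ (or even over $W(\ol{k})[\frac{1}{p}]$) to $\varphi$-modules over $\Bcry$ is \emph{not} an equivalence of categories. It is not fully faithful — over a finite residue field, non-isomorphic rank-one unit-root isocrystals become isomorphic after base change (Lang's theorem), and in general $(D_1^{\vee}\otimes D_2\otimes\Bcry)^{\varphi=1}$ is strictly larger than the isocrystal Hom — so there is no ``quasi-inverse extracting the underlying isocrystal''. The statement that is true and suffices is essential surjectivity over $W(\ol{k})[\frac{1}{p}]$: every finite free $\varphi$-module over $\Bcry^{(+)}$ (with bijective linearised Frobenius) is isomorphic to $D\otimes_{W(\ol{k})[\frac{1}{p}]}\Bcry^{(+)}$ for some isocrystal $D$. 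Granting this, apply the $\varphi$-equivariant retraction $\Bcry^{+}\to W(\ol{k})[\frac{1}{p}]$ above (whose composite with the section-induced embedding is the identity) to identify $D\simeq M_{\Ainf}\otimes_{\Ainf}W(\ol{k})[\frac{1}{p}]\simeq M'[\frac{1}{p}]\otimes_{K_0}W(\ol{k})[\frac{1}{p}]$, and then tensor back up to $\Bcry$; note that $M'[\frac{1}{p}]\otimes_{K_0}\Bcry$ only uses the section through the embedding $K_0\hookrightarrow\Bcry$, so descending merely to $W(\ol{k})[\frac{1}{p}]$ is enough. You should also record why the classification applies to $M_{\Ainf}\otimes_{\Ainf}\Bcry$ at all: the linearised Frobenius is invertible there because $\ol{\phi}(E)$ is a unit multiple of $\varphi(\xi)$ and $\varphi(\xi)/p$ is a unit in $\Acry$. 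With these repairs (and the analogous correction that the section lifts to $k\to\sO_{\Cu}^{\flat}$ via $\sO_{\Cu}^{\flat}=\lim_{x\mapsto x^p}\sO_{\Cu}/p$, not via reduction mod $\pi^{\flat}$), your argument becomes the cited proof.
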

In \cite{BMS2}, Bhatt-Morrow-Scholze constructed a cohomology theory valued in Breuil–Kisin modules for smooth proper formal schemes over $\sO_K$. Let $\varphi_{\Ainf}:\Ainf \to \Ainf$ be the Frobenius endomorphism of $\Ainf$. Let $\phi:\mS \to \Ainf$ be the $W$-linear map that sends $z$ to $[\pi^\flat]$. Note that the diagram 
\begin{equation}\label{SAinfdiag}
\xymatrix
{\mS \ar[r]^{\varphi} \ar[d]_{\phi} \ar[rd]^{\ol{\phi}}& \mS \ar[d]^{\phi} \\
\Ainf \ar[r]_{\varphi_{\Ainf}}^{\simeq } & \Ainf
}
\end{equation}
commutes.
\begin{thm}[{\cite{BMS2}}]\label{BKcoh}
 Let $X/\sO_K$ be a smooth proper formal scheme. Then there exists a $\mS$-linear cohomology theory $R\Gamma_{\mS}(X)$ equipped with a $\varphi$-semilinear map, with the following properties:
 \begin{itemize}
   \item[(1)] All $H^i_\mS(X) :=H^i(R\Gamma_\mS(X))$ are Breuil-Kisin modules.
   \item[(2)] (\'etale comparison) After scalar extension along $\ol{\phi}:\mS\to \Ainf$, one recovers \'etale cohomology of the generic fiber
   \[
   R\Gamma_{\mS}(X) \otimes_{\mS} \Ainfmu^{\wedge}_p \simeq R\Gamma_{\et}(X_\Cu,\Z_p)\otimes_{\Z_p} \Ainfmu^{\wedge}_p
   \]
   \item[(3)] (crystalline comparison) After scalar extension along the map $\mS\to W$, which is the Frobenius on $W$ and sends $z$ to $0$, one recovers crystalline cohomology of the special fibre:
   \[
   R\Gamma_\mS(X)\otimes_{\mS}^{\mathbb{L}}W \simeq R\Gamma_{\cry}(X_k/W).
   \]
   \item[(4)] (de Rham comparison) After scalar extension along the map $\tilde{\theta}\circ \varphi:\mS \to \sO_K$, one recovers de Rham cohomology:
   \[
   R\Gamma_{\mS}(X)\otimes_{\mS}^{\mathbb{L}}\sO_K \simeq R\Gamma_{\text{dR}}(X/\sO_K).
   \]
   \end{itemize}
\end{thm}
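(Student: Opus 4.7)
The plan is to reproduce the construction of Bhatt--Morrow--Scholze from \cite{BMS2}. First, I would invoke the $\Ainf$-valued cohomology theory $R\Gamma_{\Ainf}(X)$ from \cite{BMS1}, built as the derived global sections on the pro-\'etale site of the generic fibre $X_{\Cu}$ of a certain decalage of the period sheaf; this object already satisfies \'etale, de Rham, and crystalline comparisons after suitable specialization, and its cohomology groups become finite free after inverting $\mu$ and become isomorphisms under Frobenius after inverting $\xi$. The main task is then to produce an $\mS$-valued refinement along the flat map $\phi:\mS\to \Ainf$ of diagram \eqref{SAinfdiag}.

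For this descent I would work Zariski-locally on $X$: on each affine formal open $\Spec R$ admitting a framing over $\sO_K$, construct a $q$-de Rham style complex over $\mS$ carrying a semilinear Frobenius, chosen so that base change along $\phi$ recovers the local $\Ainf$-cohomology. The global $R\Gamma_\mS(X)$ is produced by Zariski (and \'etale) descent. Its cohomology modules are coherent over the regular two-dimensional ring $\mS$, hence finitely generated; the Frobenius becomes an isomorphism after inverting $E$ because this may be checked after the faithfully flat base change $\mS[\frac{1}{E}]\to \Ainf[\frac{1}{\xi}]$, reducing to the corresponding Frobenius-isomorphism statement for $R\Gamma_{\Ainf}$.

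The three comparison theorems then follow from the known $\Ainf$-comparisons by base-changing along the three arrows out of $\mS$ appearing in (variants of) \eqref{SAinfdiag}: base-change along $\ol{\phi}$ followed by inverting $\mu$ gives (2); base-change along the map $\mS \to W$ sending $z\mapsto 0$ and Frobenius on $W$, then passing through $\Acry$, yields the crystalline comparison (3); base-change along $\tilde\theta\circ\varphi$ recovers de Rham cohomology and gives (4).

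The main technical obstacle will be the local descent step: constructing a Frobenius-equivariant $\mS$-linear model of $R\Gamma_{\Ainf}$ with the correct Nygaard filtration, and verifying that it glues well enough Zariski-locally on $X$, is the real heart of \cite{BMS2}; everything else in the argument is a formal consequence once this descent is in place.
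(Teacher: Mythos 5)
First, note that the paper does not prove this statement at all: Theorem~\ref{BKcoh} is quoted verbatim from \cite{BMS2} as an imported input, so the only meaningful benchmark is the actual argument of Bhatt--Morrow--Scholze. Measured against that, your proposal misidentifies the construction. BMS2 do \emph{not} build $R\Gamma_{\mS}(X)$ by choosing framings on affine opens, writing down $q$-de Rham complexes over $\mS$, and gluing Zariski-locally. That route runs directly into the coordinate-independence problem for $q$-de Rham complexes: a framed $q$-de Rham complex depends on the framing, and producing canonical, coherently compatible quasi-isomorphisms between the complexes attached to different framings (which is what Zariski/\'etale gluing requires) was precisely the open conjecture of \cite{BMS1} and of Scholze's work on canonical $q$-deformations. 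You flag this as ``the main technical obstacle'' but offer nothing to resolve it, and it is not a technicality --- it is the entire content of the theorem, and it was never solved by the method you describe. The actual BMS2 construction avoids framings altogether: it works on the quasisyntomic site, defines the theory on quasiregular semiperfectoid rings as $\pi_0\TCn(-/\bS[z];\Z_p)$ (exactly the objects the rest of the present paper manipulates), and obtains $R\Gamma_{\mS}(X)$ by quasisyntomic descent; the Nygaard filtration and the Frobenius come for free from the homotopy fixed point spectral sequence and the cyclotomic structure. (The framing-free construction along the lines you gesture at was only achieved later, via the prismatic site, not by direct gluing of $q$-de Rham complexes.)

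Your deduction of properties (1)--(4) from the $\Ainf$-theory is in the right spirit but also glosses over real work: finite generation of $H^i_{\mS}(X)$ does not follow from a bare coherence assertion --- in \cite{BMS2} it is extracted from the de Rham specialization together with derived $(p,E)$-completeness and a Nakayama-type argument --- and the \'etale comparison (2) is not a formal base change of your putative local models but rests on the identification of $\pi_0\TCn$ after inverting $\mu$ with the \'etale side via the $\Ainf$-theory of \cite{BMS1}. In short: the high-level shape (refine $R\Gamma_{\Ainf}$ to $\mS$ along $\phi$, then specialize along the three maps out of $\mS$) is correct, but the proposed mechanism for producing the $\mS$-linear refinement is the one approach that is known not to work as stated, and the gap you acknowledge is the theorem.
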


\subsection{Perfect modules and K\"unneth formula} Let $(\mathcal{A},\otimes,1_{\mathcal{A}})$ be a symmetric monoidal, stable $\infty$-category with biexact tensor product. Firstly, we recall from \cite[section 1]{KunnethTP}.
\begin{defn}[{\cite[Definition 1.2]{KunnethTP}}]
  An object $X\in A$ is perfect if it belongs to the thick subcategory generated by the unit.
\end{defn}
For a lax symmetric monoidal, exact $\infty$-functor $F:\mathcal{A} \to \Sp$, $F(1_{\mathcal{A}})$ is naturally an $\mathbb{E}_\infty$-ring. For any $X,Y\in \mathcal{A}$, we have a natural map
\begin{equation}\label{laxmon}
F(X)\otimes_{F(1_{\mathcal{A}})} F(Y) \to F(X\otimes Y).
\end{equation}
Since $F$ is exact, if $X$ is perfect, then the map \eqref{laxmon} is an equivalence, and $F(X)$ is a perfect $F(1_{\mathcal{A}})$-module. 

We regard $\sO_K$ as an $\bS [z]$-algebra via $z\mapsto \pi$. There is a symmetric monoidal $\infty$-functor
\[
\THH(-/\bS[z];\Z_p):\Cat (\sO_K) \to \Mod_{\THH(\sO_K/\bS[z];\Z_p)}(\Sp^{BS^1}).
\]
Let us study this functor from \cite[section 11]{BMS2}. By the base change along $\bS[z] \to \bS[z^{1/{p^\infty}}]~|~z\mapsto z$, there is a natural equivalence
\begin{equation}\label{basechange}
\THH(\sO_K/\bS[z]) \otimes_{\bS[z]} \bS[z^{1/{p^\infty}}] \simeq \THH(\sO_K[\pi^{1/p^{\infty}}]/\bS[z^{1/{p^\infty}}]).
\end{equation}
The natural map $\THH(\bS[z^{1/{p^{\infty}}}];\Z_p) \to \bS[z^{1/{p^{\infty}}}]^{\wedge}_p$ is an equivalence (see \cite[Proposition. 11.7]{BMS2}), we obtain equivalences
\begin{eqnarray*}
\THH(\sO_K[\pi^{1/p^{\infty}}];\Z_p) &\simeq & \THH(\sO_K[\pi^{1/p^{\infty}}]/\bS[z^{1/{p^\infty}}];\Z_p)  \\
&\overset{\eqref{basechange}}{\simeq}&\THH(\sO_K/\bS[z];\Z_p) \otimes_{\bS[z]^{\wedge}_p} \bS[z^{1/{p^\infty}}]^{\wedge}_p.
\end{eqnarray*}
Since a morphism of homotopy groups $\pi_*(\bS[z]^{\wedge}_p) \to \pi_*(\bS[z^{1/{p^\infty}}]^{\wedge}_p)$ is faithfully flat and there is an isomorphism
$\pi_*\THH(\sO_K[\pi^{1/p^{\infty}}];\Z_p) \simeq \sO_K[\pi^{1/p^{\infty}}][u]$, where $\deg u=2$ (see \cite[section. 6]{BMS2}), we obtain an isomorphism
\begin{equation}\label{BMScalcu}
\pi_*\THH(\sO_K/\bS[z];\Z_p) \simeq \sO_K[u],
\end{equation}
where $u$ has degree $2$ (see \cite[Proposition. 11.10]{BMS2}).
\begin{prop}\label{propdualisperf}
Any dualizable object in $\Mod_{\THH(\sO_K/\bS[z];\Z_p)}(\Sp^{BS^1})$ is perfect.
\end{prop}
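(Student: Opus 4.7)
The plan is to reduce to the non-equivariant case and then reconstruct the $S^1$-equivariant structure inductively. Write $R := \THH(\sO_K/\bS[z];\Z_p)$; by \eqref{BMScalcu}, $R$ is a connective $\mathbb{E}_\infty$-ring with $\pi_* R = \sO_K[u]$ concentrated in even degrees, a regular Noetherian graded ring of global dimension $2$. The unit of $\Mod_R(\Sp^{BS^1})$ is $R$ itself, and perfect means belonging to the thick subcategory generated by $R$.

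First, I would note that the symmetric monoidal forgetful functor $U : \Mod_R(\Sp^{BS^1}) \to \Mod_R(\Sp)$ preserves dualizable objects, so if $M$ is dualizable then $U(M)$ is dualizable in $\Mod_R(\Sp)$. Since $R$ is connective with $\pi_* R$ Noetherian and of finite global dimension, dualizable $R$-modules in $\Sp$ coincide with perfect ones; in particular $\pi_* M$ is a bounded, finitely generated $\sO_K[u]$-module admitting a finite free resolution of length at most $2$.

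Next, I would upgrade this to the $S^1$-equivariant setting by induction along the Postnikov tower of $M$ in $\Mod_R(\Sp^{BS^1})$. Since $M$ is bounded, the tower is finite, and by induction on the cofiber sequences $\tau_{\leq i-1} M \to \tau_{\leq i} M \to \pi_i M [i]$ it suffices to show each layer $\pi_i M[i]$ belongs to the thick subcategory generated by $R$. Because $S^1$ is connected, it acts trivially on the discrete group $\pi_i M$, so $\pi_i M[i]$ is the Eilenberg--MacLane-type $R$-module attached to a finitely generated $\sO_K[u]$-module $N$ carrying trivial $S^1$-action. A finite free resolution of $N$ over $\sO_K[u]$ should then lift, stage by stage, to a finite tower of cofiber sequences in $\Mod_R(\Sp^{BS^1})$ whose graded pieces are finite sums of shifts of $R$.

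The hardest step will be this final lifting. Although the $S^1$-action on $N$ is trivial, the $S^1$-action on $R$ itself is highly non-trivial (it encodes the Connes operator), so realizing each algebraic boundary map of the $\sO_K[u]$-resolution as an $S^1$-equivariant $R$-module map demands control over obstructions sitting in higher $\Ext$-groups computed by the universal coefficient / hypercohomology spectral sequence. I expect these to vanish thanks to a strongly convergent spectral sequence whose $E_2$-page involves $\Ext^*_{\pi_* R}(-,-)$: the bound $\Ext^j = 0$ for $j > 2$ coming from the global dimension of $\sO_K[u]$, combined with the boundedness of the Postnikov tower, should force convergence in a range where all obstructions are trivial.
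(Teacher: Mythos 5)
Your opening reduction is fine: the forgetful functor $\Mod_{\THH(\sO_K/\bS[z];\Z_p)}(\Sp^{BS^1})\to\Mod_{\THH(\sO_K/\bS[z];\Z_p)}(\Sp)$ is symmetric monoidal, so a dualizable object has perfect underlying module (and here no Noetherian or regularity hypotheses are even needed: dualizable $=$ perfect in $\Mod_R(\Sp)$ for any ring spectrum $R$). But the next step contains a genuine error. Writing $R=\THH(\sO_K/\bS[z];\Z_p)$, perfectness of the underlying $R$-module does \emph{not} imply that $\pi_*M$ is bounded: $R$ itself is dualizable and $\pi_*R\simeq\sO_K[u]$ with $|u|=2$ is nonzero in infinitely many degrees. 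You are implicitly treating $R$ as a discrete ring of global dimension $2$; the graded ring $\sO_K[u]$ does have global dimension $2$, but finitely generated modules over it (hence homotopy of perfect $R$-modules) are bounded below, not bounded. Consequently ``the Postnikov tower is finite'' fails, the decomposition of $M$ into finitely many layers $\pi_iM[i]$ is unavailable, and the induction never terminates. (If one wants a filtration adapted to this $R$, it would have to be something like a filtration by powers of $u$, which is a different argument.)

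The second, deeper gap is the equivariant lifting step, which you correctly identify as the hard point but then dispose of with an unsupported vanishing claim. Mapping spaces in $\Mod_R(\Sp^{BS^1})$ are homotopy fixed points $\Map_R(X,Y)^{hS^1}$, and the relevant obstruction classes live in the homotopy fixed point spectral sequence with $E_2$-page built from $H^*(BS^1;\pi_*\Map_R(X,Y))$; since $H^*(BS^1)$ is a polynomial ring, these classes occur in arbitrarily high cohomological degree, so the bound $\Ext^j_{\pi_*R}(-,-)=0$ for $j>2$ gives no vanishing line for them. Moreover a layer of $M$ carries a specific $S^1$-equivariant structure (trivial on homotopy, but with nontrivial Connes-operator coherences), and deciding whether such an object lies in the thick subcategory generated by $R$ \emph{with its} $S^1$-action is essentially the original problem, not a formal consequence of the algebraic resolution. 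This is exactly why ``dualizable implies perfect'' in the Borel $S^1$-equivariant setting is a genuine theorem requiring that $\pi_*R$ be even, Noetherian and regular of finite Krull dimension: the paper's proof consists of observing, via the computation $\pi_*\THH(\sO_K/\bS[z];\Z_p)\simeq\sO_K[u]$, that these hypotheses hold and then citing Antieau--Mathew--Nikolaus (Theorem 2.15 of the reference used for the K\"unneth results), rather than running a Postnikov/obstruction-theory argument of the kind you propose. As written, your proposal neither reproduces that theorem nor supplies a working substitute for it.
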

\begin{proof}
By the isomorphism \eqref{BMScalcu}, $\pi_*\THH(\sO_K/\bS[z];\Z_p)$ is a regular noetherian ring of finite Krull dimension concentrated in even degrees. We obtain the claim by \cite[Theorem 2.15]{KunnethTP}.
\end{proof}
\begin{prop}\label{TPTCkunneth}
Let $\sT_1,\sT_2$ be $\sO_K$-linear categories and suppose $\sT_1$ is smooth and proper. Then $\THH(\sT_1/\bS[z];\Z_p)$ is perfect in $\Mod_{\THH(\sO_K/\bS[z];\Z_p)}(\Sp^{BS^1})$, and $\TCn(\sT_1/\bS[z];\Z_p)$ (resp. $\TP(\sO_K/\bS[z];\Z_p)$) is a perfect $\TCn(\sT_1/\bS[z];\Z_p)$-module (resp. $\TP(\sO_K/\bS[z];\Z_p)$-module) and the natural map
\[
\TCn(\sT_1/\bS[z];\Z_p) \otimes_{\TCn(\sO_K/\bS[z];\Z_p)}\TCn(\sT_2/\bS[z];\Z_p) \to \TCn(\sT_1\otimes_{\sO_K}\sT_2/\bS[z];\Z_p)
\]
\[
(\text{resp. }\TP(\sT_1/\bS[z];\Z_p) \otimes_{\TP(\sO_K/\bS[z];\Z_p)}\TP(\sT_2/\bS[z];\Z_p) \to \TP(\sT_1\otimes_{\sO_K}\sT_2/\bS[z];\Z_p) \quad )
\]
is an equivalence.
\end{prop}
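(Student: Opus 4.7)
The plan is to derive the statement as a direct application of the formalism of perfect modules recalled at the beginning of this subsection, combined with Proposition~\ref{propdualisperf}. The proof is very short once one accepts this framework, so I would present it as essentially a formal consequence.

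First I would observe that the hypothesis that $\sT_1$ is a smooth proper $\sO_K$-linear category is precisely the statement that $\sT_1$ is a dualizable object of the symmetric monoidal $\infty$-category $\Cat(\sO_K)$. Since $\THH(-/\bS[z];\Z_p)$ is a symmetric monoidal functor, it preserves dualizability, so $\THH(\sT_1/\bS[z];\Z_p)$ is dualizable in $\Mod_{\THH(\sO_K/\bS[z];\Z_p)}(\Sp^{BS^1})$. Proposition~\ref{propdualisperf} then promotes dualizability to perfectness. The same symmetric monoidality, together with perfectness of one factor making the lax monoidal structure map~\eqref{laxmon} into an equivalence, yields the $\THH$ K\"unneth equivalence
\[
\THH(\sT_1/\bS[z];\Z_p)\otimes_{\THH(\sO_K/\bS[z];\Z_p)}\THH(\sT_2/\bS[z];\Z_p)\simeq \THH(\sT_1\otimes_{\sO_K}\sT_2/\bS[z];\Z_p).
\]

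Next I would feed this into the general principle recalled around~\eqref{laxmon}: for a lax symmetric monoidal exact functor $F:\mathcal{A}\to \Sp$ and a perfect object $X\in \mathcal{A}$, the spectrum $F(X)$ is a perfect $F(1_{\mathcal{A}})$-module and the natural map $F(X)\otimes_{F(1_{\mathcal{A}})}F(Y)\to F(X\otimes Y)$ is an equivalence. Both homotopy fixed points $(-)^{hS^1}$ and Tate fixed points $(-)^{tS^1}$, composed with $p$-completion so as to yield $\TCn(-;\Z_p)$ and $\TP(-;\Z_p)$ respectively, are lax symmetric monoidal exact functors $\Sp^{BS^1}\to \Sp$. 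Applying the principle to $X=\THH(\sT_1/\bS[z];\Z_p)$ and $Y=\THH(\sT_2/\bS[z];\Z_p)$, and combining with the $\THH$ K\"unneth formula from the previous step, gives at once both the perfectness of $\TCn(\sT_1/\bS[z];\Z_p)$ over $\TCn(\sO_K/\bS[z];\Z_p)$ (respectively of $\TP(\sT_1/\bS[z];\Z_p)$ over $\TP(\sO_K/\bS[z];\Z_p)$) and the two K\"unneth equivalences in the statement.

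The only non-formal ingredient is Proposition~\ref{propdualisperf}, which has already been proved using the concrete computation~\eqref{BMScalcu} of $\pi_*\THH(\sO_K/\bS[z];\Z_p)$ as a regular noetherian ring of finite Krull dimension concentrated in even degrees. The rest is a formal unwinding. I expect the only subtlety to be bookkeeping with $p$-completion; I would handle this by working throughout in the $p$-complete setting from the outset, so that the lax symmetric monoidal exact functors $\TCn(-;\Z_p)$ and $\TP(-;\Z_p)$ land in the appropriate $p$-complete module categories and all tensor products are tacitly $p$-completed, which causes no trouble for the perfect objects at hand.
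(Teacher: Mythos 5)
Your proposal is correct and follows essentially the same route as the paper's own proof: reduce everything to the perfectness of $\THH(\sT_1/\bS[z];\Z_p)$ in $\Mod_{\THH(\sO_K/\bS[z];\Z_p)}(\Sp^{BS^1})$, obtained from dualizability of $\sT_1$ in $\Cat(\sO_K)$ plus symmetric monoidality of $\THH(-/\bS[z];\Z_p)$ and Proposition~\ref{propdualisperf}, and then apply the lax symmetric monoidal exact functors $(-)^{hS^1}$ and $(-)^{tS^1}$ (as in \cite[Corollary I.4.3]{NS18}) together with the principle around~\eqref{laxmon}. The only cosmetic difference is that the paper views these functors as landing in $\Mod_{\TCn(\sO_K/\bS[z];\Z_p)}(\Sp)$ rather than $\Sp$, which changes nothing in the argument.
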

\begin{proof}
Note that $\infty$-functors 
\[
(-)^{hS^1}:\Mod_{\THH(\sO_K/\bS[z];\Z_p)}(\Sp^{BS^1}) \to \Mod_{\TCn(\sO_K/\bS[z];\Z_p)}(\Sp)
\]
and \[
(-)^{tS^1}:\Mod_{\THH(\sO_K/\bS[z];\Z_p)}(\Sp^{BS^1}) \to \Mod_{\TCn(\sO_K/\bS[z];\Z_p)}(\Sp)\] are lax symmetric monoidal, exact (see \cite[Corollary I.4.3]{NS18}). It is enough to show $\THH(\sT_1/\bS[z];\Z_p)$ is perfect in $\Mod_{\THH(\sO_K/\bS[z];\Z_p)}(\Sp^{BS^1})$. Since a $\infty$-functor
\[
\THH(-/\bS[z];\Z_p): \Cat(\sO_K) \to\Mod_{\THH(\sO_K/\bS[z];\Z_p)}(\Sp^{BS^1})
\]
is symmetric monoidal, and $\sT_1$ is dualizable in $\Cat(\sO_K)$ (Cf. \cite[Ch. 11]{Lurie17}). Thus $\THH(\sT_1/\bS[z];\Z_p)$ is dualizable in $\Mod_{\THH(\sO_K/\bS[z];\Z_p)}(\Sp^{BS^1})$. By Proposition \ref{propdualisperf}, we obtain the claim. 
\end{proof}

\subsection{Breuil-Kisin module of stable $\infty$-categories} Let us recall Antieau-Mathew-Nikolaus's comparison theorem of symmetric monoidal $\infty$-functors from \cite{KunnethTP}.
\begin{prop}[{\cite[Proposition 4.6]{KunnethTP}}]\label{compmonoid}
Let $\sT,\hat{\sT}$ be symmetric monoidal $\infty$-categories. Let $F_1,F_2:\sT\to\hat{\sT}$ be symmetric monoidal functors and let $t:F_1\Longrightarrow F_2$ be a symmetric monoidal natural transformation. Suppose every object of $\sT$ is dualizable. Then $t$ is an equivalence.
\end{prop}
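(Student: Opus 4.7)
The plan is to construct, for each dualizable object $X\in\sT$, an explicit two-sided inverse $s_X:F_2(X)\to F_1(X)$ to the component $t_X$, using the duality data for $X$ together with the symmetric monoidal compatibilities enjoyed by $t$. First I would fix a dual $X^\vee$ with coevaluation $\eta_X:\mathbf{1}_\sT\to X\otimes X^\vee$ and evaluation $\epsilon_X:X^\vee\otimes X\to\mathbf{1}_\sT$ satisfying the triangle identities. Because $F_1$ and $F_2$ are symmetric monoidal, they preserve dualizable objects together with their duality data, so $F_i(X)$ is dualizable in $\hat{\sT}$ with dual $F_i(X^\vee)$, coevaluation $F_i(\eta_X)$, and evaluation $F_i(\epsilon_X)$.

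From the symmetric monoidal structure on $t$ I extract the coherences $t_{\mathbf{1}_\sT}\simeq\text{id}$ and $t_{A\otimes B}\simeq t_A\otimes t_B$. Applying naturality of $t$ along the structure maps $\eta_X$ and $\epsilon_X$ then yields the two identities
\begin{align*}
(t_X\otimes t_{X^\vee})\circ F_1(\eta_X) &\simeq F_2(\eta_X), \\
F_2(\epsilon_X)\circ (t_{X^\vee}\otimes t_X) &\simeq F_1(\epsilon_X).
\end{align*}
I define the candidate inverse as the composition
\begin{align*}
s_X:F_2(X) &\xrightarrow{F_1(\eta_X)\otimes\text{id}} F_1(X)\otimes F_1(X^\vee)\otimes F_2(X) \\
&\xrightarrow{\text{id}\otimes t_{X^\vee}\otimes\text{id}} F_1(X)\otimes F_2(X^\vee)\otimes F_2(X) \\
&\xrightarrow{\text{id}\otimes F_2(\epsilon_X)} F_1(X).
\end{align*}

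To verify $t_X\circ s_X\simeq\text{id}_{F_2(X)}$, I would push $t_X$ through the last arrow to the leftmost tensor factor and collapse the first two inner maps; using the first identity above this collapses to $F_2(\eta_X)\otimes\text{id}$, so the composite becomes $(\text{id}\otimes F_2(\epsilon_X))\circ(F_2(\eta_X)\otimes\text{id})$, which equals $\text{id}_{F_2(X)}$ by the triangle identity for the dual pair $(F_2(X),F_2(X^\vee))$. A parallel computation, this time pushing $t_X$ in at the start and using the second identity, shows $s_X\circ t_X\simeq\text{id}_{F_1(X)}$.

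The main obstacle is not the content of the argument, which is essentially the classical $1$-categorical proof, but the coherence: in the $\infty$-categorical setting each $\simeq$ above must be realized by a coherent homotopy, not by an equality on homotopy categories. The required coherence data for the symmetric monoidal $\infty$-functors $F_1,F_2$ and for the monoidal natural transformation $t$ are encoded in Lurie's operadic formalism \cite{Lurie17}, and extracting from that setup enough of the implicit higher-cell data to make the diagram chases above rigorous is the only genuine technical point.
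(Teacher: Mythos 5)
Your construction of the inverse $s_X$ and the two triangle-identity verifications are exactly the standard argument, and indeed the paper does not prove this proposition at all: it is quoted directly from \cite[Proposition 4.6]{KunnethTP}, whose proof is the same classical computation you give. One remark: the higher-coherence issue you flag at the end is not actually a genuine obstacle, because a morphism in an $\infty$-category is an equivalence if and only if it becomes an isomorphism in the homotopy category; since $t$ induces a monoidal natural transformation between the induced symmetric monoidal functors $h\sT\to h\hat{\sT}$, and $X$ is dualizable in $\sT$ if and only if it is dualizable in $h\sT$, your entire diagram chase can be carried out $1$-categorically, with no need to extract higher cells from the operadic formalism.
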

In \cite[Proposition 11.10]{BMS2}, Bhatt-Morrow-Scholze showed the followings: on homotopy groups, there exists isomorphisms
\[
\pi_* \TCn(\sO_K/\bS[z];\Z_p) \simeq \mS[u,v]/(uv-E)
\]
where $u$ is of degree $2$ and $v$ is of degree $-2$, and 
\[
\pi_* \TP(\sO_K/\bS[z];\Z_p) \simeq \mS[\sigma^{\pm}]
\]
where $\sigma$ is of degree $2$. Let $\varphi$ be the endomorphism of $\mS$ determined by the Frobenius on $W$ and $z\mapsto z^{p}$. Nikolaus-Scholze \cite{NS18} construct two maps
\[
\can,\varphi^{hS^1}_\sT:\TCn(\sT/\bS[z];\Z_p) \rightrightarrows \TP(\sT/\bS[z];\Z_p).
\]
In \cite[Proposition 11.10]{BMS2}, Bhatt-Morrow-Scholze also showed the morphism 
\[
\can_{\sO_K}:\pi_*\TCn(\sO_K/\bS[z];\Z_p) \to \pi_*\TP(\sO_K/\bS[z];\Z_p),
\]
is $\mS$-linear and sends $u$ to $E\sigma$ and $v$ to $\sigma^{-1}$, and the morphism
\[
\varphi^{hS^1}_{\sO_K}:\pi_*\TCn(\sO_K/\bS[z];\Z_p) \to \pi_*\TP(\sO_K/\bS[z];\Z_p)
\]
is $\varphi$-linear and sends $u$ to $\sigma$ and $v$ to $\varphi(E) \sigma^{-1}$. Since $\sigma$ is an invertible element in $\TP(\sO_K/\bS[z];\Z_p)$, $\varphi^{hS^1}_{\sO_K}$ induces a map
\begin{equation}\label{localFrob}
\tilde{\varphi}^{hS^1}_{\sT}:\TCn(\sT/\bS[z];\Z_p)[\frac{1}{u}] \to \TP(\sT/\bS[z];\Z_p)
\end{equation}
for an $\sO_K$-linear stable $\infty$-category $\sT$. We note that on homotopy groups, the morphism
\[  \tilde{\varphi}^{hS^1}_{\sO_K}:\pi_*\TCn(\sO_K/\bS[z];\Z_p)[\frac{1}{u}] \to \pi_*\TP(\sO_K/\bS[z];\Z_p)
\]
is given by $\mS[u^{\pm}] \to \mS[\sigma^{\pm}]$ which is $\varphi$-semi-linear and sends $u$ to $\sigma$.
\begin{lemma}\label{periodlemma}
Let $\sT$ be a smooth proper $\sO_K$-linear category. Then the followings hold:
\begin{itemize}
  \item[(1)] $\pi_*\TCn(\sT/\bS[z];\Z_p)$ is a finitely generated $\pi_*\TCn(\sO_K/\bS[z];\Z_p)$-module, thus $\pi_i\TCn(\sT/\bS[z];\Z_p)$ is a finitely generated $\mS$-module for all $i$.
  \item[(2)] There is a natural number $n$ satisfying that for any $j \geq n$, $\pi_j\TCn(\sT/\bS[z];\Z_p)\overset{u\cdot }{\to} \pi_{j+2}\TCn(\sT/\bS[z];\Z_p)$ is an isomorphism. 
\end{itemize}
\end{lemma}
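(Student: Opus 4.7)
The plan is to reduce both statements to elementary graded commutative algebra over the ring $R_* := \pi_*\TCn(\sO_K/\bS[z];\Z_p) \simeq \mS[u,v]/(uv-E)$, exploiting the perfection statement of Proposition~\ref{TPTCkunneth}. Throughout the argument the key standing fact is that $\mS = W[[z]]$ is noetherian, so $R_*$ is a $\Z$-graded noetherian ring whose even-degree components are each free of rank one over $\mS$ and whose odd-degree components vanish.

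For (1), Proposition~\ref{TPTCkunneth} shows that $\TCn(\sT/\bS[z];\Z_p)$ lies in the thick subcategory generated by $\TCn(\sO_K/\bS[z];\Z_p)$. Since $R_*$ is noetherian, a standard cell-by-cell induction then forces $M_* := \pi_*\TCn(\sT/\bS[z];\Z_p)$ to be finitely generated as a graded $R_*$-module. Choosing finitely many homogeneous generators yields a surjection from a finite direct sum of shifts of $R_*$ onto $M_*$, and because each homogeneous component of $R_*$ is a finitely generated $\mS$-module, the same follows for each $M_i$.

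For (2), the plan is to analyse the kernel $K_* := \ker(u\cdot\colon M_* \to M_*)$ and the cokernel $C_* := M_*/uM_*$ of multiplication by $u \in R_2$. Both are finitely generated graded $R_*$-modules---one is a submodule, the other a quotient of the finitely generated module $M_*$ over the noetherian ring $R_*$---and both are annihilated by $u$: for $K_*$ by definition, and for $C_*$ because $u\cdot[x] = [ux] = 0$ in $M_{n+2}/uM_n$ for every $x \in M_n$. Hence $K_*$ and $C_*$ are finitely generated graded modules over $R_*/(u) \simeq (\mS/E)[v]$, where $v$ sits in degree $-2$. Any finitely generated graded module over $(\mS/E)[v]$ is generated by finitely many homogeneous elements of some maximum degree $d$, and since $v$ only decreases degree the module must vanish in degrees exceeding $d$. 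Taking $n$ larger than the common degree bound of $K_*$ and $C_*$ then produces the required integer: for every $j \geq n$ one has $K_j = 0$ and $C_{j+2} = 0$, which is precisely the assertion that $u\cdot\colon M_j \to M_{j+2}$ is an isomorphism.

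No substantial obstacle is anticipated beyond invoking Proposition~\ref{TPTCkunneth}; the remainder is a graded Nakayama-type observation that finitely generated graded modules over $(\mS/E)[v]$ with $v$ in negative degree are bounded above.
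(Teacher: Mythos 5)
Your proposal is correct and follows essentially the same route as the paper: claim (1) comes from the perfectness statement of Proposition~\ref{TPTCkunneth} together with noetherianity of $\pi_*\TCn(\sO_K/\bS[z];\Z_p)\simeq\mS[u,v]/(uv-E)$, and claim (2) from finite generation plus the explicit structure of this graded ring. The only difference is that you spell out the graded-algebra bookkeeping (kernel and cokernel of $u$ as finitely generated modules over $R_*/(u)\simeq(\mS/E)[v]$, bounded above in degree) which the paper leaves implicit in its one-line deduction of (2) from (1).
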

\begin{proof}
Claim (1) directly follows from Proposition ~\ref{TPTCkunneth}. For any $j \geq 0$, by the calculation of $\pi_* \TCn(\sO_K/\bS[z];\Z_p)$ (see \cite[Proposition 11.10]{BMS2}), we know $\pi_j\TCn(\sO_K/\bS[z];\Z_p)\overset{u\cdot}{\to} \pi_{j+2}\TCn(\sO_K/\bS[z];\Z_p)$ is an isomorphism. This yields claim (2) by claim (1).
\end{proof}
\begin{thm}\label{BKtheorem}
Let $\sT$ be a smooth proper $\sO_K$-linear category. Then there is a natural number $n$ such that the homotopy group $\pi_i \TCn(\sT/\bS[z];\Z_p)$ has a natural structure of a Breuil-Kisin module for any $i\geq n$, and the dual $\pi_i \TCn(\sT/\bS[z];\Z_p)^{\vee}$ also has a natural structure of a Breuil-Kisin module of finite $E$-height for any $i\geq n$. 
\end{thm}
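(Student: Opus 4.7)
The plan is to first establish that $M_i := \pi_i\TCn(\sT/\bS[z];\Z_p)$ is a Breuil-Kisin module for $i \geq n$, and then to deduce the dual statement. Finite generation of $M_i$ as an $\mS$-module is exactly Lemma~\ref{periodlemma}(1). It therefore remains to construct the Frobenius isomorphism
\[
\varphi_{M_i}\colon M_i \otimes_{\mS,\varphi} \mS[\tfrac{1}{E}] \simeq M_i[\tfrac{1}{E}].
\]

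My strategy is to exploit the two natural transformations
\[
\can_\sT,\ \varphi^{hS^1}_\sT\colon \TCn(\sT/\bS[z];\Z_p) \rightrightarrows \TP(\sT/\bS[z];\Z_p)
\]
of symmetric monoidal functors on $\Cat(\sO_K)$. In the base case $\sT = \perf(\sO_K)$, the explicit calculations recalled above, together with the formulas $\can(u)=E\sigma$, $\can(v)=\sigma^{-1}$, $\varphi^{hS^1}(u)=\sigma$ and $\varphi^{hS^1}(v)=\varphi(E)\sigma^{-1}$, show directly that $\can_{\sO_K}$ becomes an $\mS$-linear isomorphism after inverting $E$ (both $u$ and $v$ become units on the target), and that the $\mS$-linear map induced by $\varphi^{hS^1}_{\sO_K}$ after base change along $\varphi$ is already an isomorphism on homotopy groups before any localization. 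I then propagate both statements from $\sO_K$ to arbitrary smooth proper $\sT$ by invoking Proposition~\ref{TPTCkunneth}: the perfectness of $\TCn(\sT/\bS[z];\Z_p)$ and $\TP(\sT/\bS[z];\Z_p)$ as modules over $\TCn(\sO_K/\bS[z];\Z_p)$ and $\TP(\sO_K/\bS[z];\Z_p)$ ensures that tensoring with the base-case equivalences yields equivalences for $\sT$. Composing $\varphi^{hS^1}_\sT$ with the inverse of $\can_\sT[\tfrac{1}{E}]$ and passing to $\pi_i$ for $i \geq n$ (using Lemma~\ref{periodlemma}(2) to identify $M_i$ with its $[\tfrac{1}{u}]$-localization, so that the source of $\tilde{\varphi}^{hS^1}_\sT$ from \eqref{localFrob} matches $M_i$) then produces $\varphi_{M_i}$.

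For the dual statement, the remark following the definition of Breuil-Kisin modules already observes that $M_i^\vee = \Hom_\mS(M_i,\mS)$ is finite free over $\mS$, using $\text{gl.dim}\,\mS = 2$ together with \cite[Proposition 4.3]{BMS1}; its Frobenius is the one described in \eqref{dualFrob}. Finite $E$-height is then automatic: the integral Frobenius map $M_i^\vee \otimes_{\mS,\varphi}\mS \to M_i^\vee$ is injective because $M_i^\vee$ is $E$-torsion free, and its cokernel is finitely generated over $\mS$ and annihilated by inverting $E$, hence killed by some power $E^h$. The main obstacle I anticipate lies in the second paragraph, namely arranging the interplay between the symmetric monoidal structures, the $\varphi$-twist and the $E$-localization so that the $\sT = \perf(\sO_K)$ computation promotes cleanly to a Frobenius isomorphism for arbitrary smooth proper $\sT$; Proposition~\ref{compmonoid} should be useful here, since it reduces the required check of symmetric monoidal natural transformations to agreement on dualizable objects, and once the bookkeeping is set up the remainder of the argument is formal.
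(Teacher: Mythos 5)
Your construction of the Breuil--Kisin structure on $M_i=\pi_i\TCn(\sT/\bS[z];\Z_p)$ is essentially the paper's argument: the explicit $\sO_K$-computation, propagation to general smooth proper $\sT$ via Proposition~\ref{TPTCkunneth}, composing the linearized $\tilde{\varphi}^{hS^1}_\sT$ with $\can_\sT[\frac{1}{E}]^{-1}$, and Lemma~\ref{periodlemma}(2) to remove the $u$-localization. Two imprecisions there: the linearized $\varphi^{hS^1}_{\sO_K}$ is an isomorphism only after inverting $u$ (on $\pi_*\TCn$ itself the base-changed map $\mS[u,v]/(uv-\varphi(E))\to\mS[\sigma^{\pm}]$ is not surjective), and the propagation of the Frobenius comparison cannot be run as a thick-subcategory argument in $\Mod_{\THH(\sO_K/\bS[z];\Z_p)}(\Sp^{BS^1})$, since the cyclotomic Frobenius is not defined for arbitrary modules there; the paper instead applies Proposition~\ref{compmonoid} to the symmetric monoidal natural transformations over $\Catsat(\sO_K)$, where every object is dualizable (you flag this, so I regard it as vagueness rather than a gap). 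You should also note that passing from the spectrum-level equivalences to homotopy groups uses flatness of $\varphi:\mS\to\mS$.

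The genuine gap is in the dual statement: you assert that finite $E$-height of $M_i^{\vee}$ is ``automatic,'' starting from ``the integral Frobenius map $M_i^{\vee}\otimes_{\mS,\varphi}\mS\to M_i^{\vee}$.'' No such integral map exists for a general Breuil--Kisin module: \eqref{dualFrob} only produces an isomorphism after inverting $E$, and effectivity of the dual can fail. For instance, $\fM=\mS$ with $\varphi_{\fM}(1\otimes 1)=E$ is a Breuil--Kisin module of finite $E$-height whose dual has Frobenius sending the generator to $E^{-1}$ times the dual generator, so the dual admits no integral structure map and is not of finite $E$-height. What makes the dual work here is the specific factorization of $\varphi_{M_i}$ that you yourself built: the linearized cyclotomic Frobenius is an \emph{integral} isomorphism $M_i\otimes_{\mS,\varphi}\mS\simeq\pi_i\TP(\sT/\bS[z];\Z_p)$ (this is \eqref{TCnKisin}), while $\can_\sT$ is an integral map that becomes an isomorphism only after inverting $E$ (this is \eqref{canisom}). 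Dualizing this factorization yields the integral map
\begin{equation*}
M_i^{\vee}\otimes_{\mS,\varphi}\mS\;\simeq\;\Hom_{\mS}\bigl(\pi_i\TP(\sT/\bS[z];\Z_p),\mS\bigr)\;\xrightarrow{\;\can_\sT^{\vee}\;}\;M_i^{\vee},
\end{equation*}
whose $E$-localization is the dual of \eqref{structuremapTCn}; this is exactly the paper's \eqref{dualstruc}, and it is the step you must supply. Once that integral map is in place, your injectivity and ``cokernel is finitely generated and $E$-power torsion, hence killed by $E^h$'' argument is fine.
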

\begin{proof}
The morphism $\tilde{\varphi}^{hS^1}_\sT$ induces a morphism of $\TP(\sO_K/\bS[z];\Z_p)$-modules:
\begin{equation}\label{TCTP}
\TCn(\sT/\bS[z];\Z_p) [\frac{1}{u}]\otimes_{\TCn(\sO_K/\bS[z];\Z_p)[\frac{1}{u}],\tilde{\varphi}^{hS^1}_{\sO_K}} \TP(\sO_K/\bS[z];\Z_p) \to \TP(\sT/\bS[z];\Z_p).
\end{equation}
By proposition~\ref{TPTCkunneth}, both sides of the map \eqref{TCTP} yields symmetric monoidal functors from $\Catsat(\sO_K)$ to $\Mod_{\TP(\sO_K/\bS[z];\Z_p)}(\Sp)$, and the map \eqref{TCTP} yields a symmetric monoidal natural transformation between them. By Proposition~\ref{compmonoid}, the morphism \eqref{TCTP} is an equivalence. On $0$-th homotopy group, $\tilde{\varphi}^{hS^1}_{\sO_K}$ is given by $\varphi:\mS\to \mS$. Since $\varphi$ is flat, one has an isomorphism
\begin{equation}\label{preTCnKisin}
\pi_i\TCn(\sT/\bS[z];\Z_p)[\frac{1}{u}] \otimes_{\mS,\varphi} \mS \simeq \pi_i \TP(\sT/\bS[z];\Z_p)
\end{equation}
for any $i$. By Lemma~\ref{periodlemma} (2), one obtains an isomorphism 
\[
\pi_i\TCn(\sT/\bS[z];\Z_p) \simeq \pi_i\TCn(\sT/\bS[z];\Z_p)[\frac{1}{u}] 
\]
for any $i \geq n$, thus we have a $\mS$-linear isomorphism on homotopy groups:
\begin{equation}\label{TCnKisin}
\pi_i\TCn(\sT/\bS[z];\Z_p) \otimes_{\mS,\varphi} \mS \simeq \pi_i \TP(\sT/\bS[z];\Z_p)
\end{equation}
for any $i \geq n$.

After inverting $E\in \mS \simeq \pi_0 \TCn(\sO_K/\bS[z];\Z_p)$, the morphism $\can_\sT$ induces a morphism of $\TP(\sO_K/\bS[z];\Z_p)[\frac{1}{E}]$-module:
\begin{equation}\label{TCTPcan}
\TCn(\sT/\bS[z];\Z_p)[\frac{1}{E}] \otimes_{\TCn(\sO_K/\bS[z];\Z_p)[\frac{1}{E}],\can_{\sO_K}} \TP(\sO_K/\bS[z];\Z_p)[\frac{1}{E}] \to \TP(\sT/\bS[z];\Z_p)[\frac{1}{E}].
\end{equation}
Both sides of the map \eqref{TCTPcan} yield symmetric monoidal functors from $\Catsat(\sO_K)$ to $\Mod_{\TP(\sO_K/\bS[z];\Z_p)[\frac{1}{E}]}(\Sp)$, and the map \eqref{TCTPcan} yield a symmetric monoidal natural transformation between them. Thus the morphism \eqref{TCTPcan} is an equivalence. Note that the morphism 
\[
\can_{\sO_K}[\frac{1}{E}]:\pi_*\TCn(\sO_K/\bS[z];\Z_p)[\frac{1}{E}] \to \pi_*\TP(\sO_K/\bS[z];\Z_p)[\frac{1}{E}]
\]
is an isomorphism (see \cite[Proposition 11.10]{BMS2}). This yields an isomorphism
\begin{equation}\label{canisom}
\can_{\sT}[\frac{1}{E}]:\pi_*\TCn(\sT/\bS[z];\Z_p)[\frac{1}{E}] \simeq \pi_*\TP(\sT/\bS[z];\Z_p)[\frac{1}{E}].
\end{equation}
Combine \eqref{canisom} with \eqref{TCnKisin}, we obtain an isomorphism
\begin{equation}\label{structuremapTCn}
\pi_i\TCn(\sT/\bS[z];\Z_p) \otimes_{\mS,\varphi} \mS [\frac{1}{E}]\overset{\eqref{TCnKisin}}{\simeq} \pi_i \TP(\sT/\bS[z];\Z_p)[\frac{1}{E}] \overset{\eqref{canisom}}{\simeq} \pi_i\TCn(\sT/\bS[z];\Z_p)[\frac{1}{E}] 
\end{equation}
for any $i \geq n$. Let us study the dual of \eqref{structuremapTCn}. We have a morphism
\begin{align}\label{dualstruc}
\pi_i\TCn(\sT/\bS[z];\Z_p)^{\vee} \otimes_{\mS,\varphi}\mS &= \Hom_{\mS}(\pi_i\TCn(\sT/\bS[z];\Z_p) \otimes_{\mS,\varphi}\mS, \mS) \\
    \overset{\eqref{TCnKisin}^{\vee}}{\simeq} \Hom_{\mS}(\pi_i\TP(\sT/\bS[z];\Z_p), \mS) &\overset{\text{can}_{\sT}^{\vee}}{\to} \Hom_{\mS}(\pi_i\TCn(\sT/\bS[z];\Z_p), \mS) \nonumber
\end{align}
After localization by $E$, the morphism  \eqref{dualstruc} coincides with the dual of \eqref{structuremapTCn}. 
\end{proof} 

\subsection{The comparison between $\TCn(\sT/\bS[z];\Z_p)$ and $\TP(\sT_{\sO_\Cu};\Z_p)$} In this section, for a smooth proper $\sO_K$-linear category $\sT$, we will compare $\TCn(\sT/\bS[z];\Z_p)$ with $\TP(\sT_{\sO_\Cu};\Z_p)$. Denote $K_\infty=K(\pi^{1/p^{\infty}})$ and by $\sO_{K_\infty}$ by the integer ring of $K_\infty$.
\begin{lemma}[{\cite[Corollary 11.8]{BMS2}}]\label{BMSTHHlemma}
 For any $\sO_K$-linear stable $\infty$-category $\mathcal{T}$, the natural map
 \[ \THH(\mathcal{T}_{\sO_\Cu};\Z_p) 
 \to \THH(\mathcal{T}_{\sO_\Cu}/\bS[z^{1/p^{\infty}}];\Z_p) 
 \]
 is an equivalence which is compatible with $S^1$-action, and $G_{K_\infty}$-action. In particular, the natural map
 \[ \TP(\mathcal{T}_{\sO_\Cu};\Z_p) 
 \to \TP(\mathcal{T}_{\sO_\Cu}/\bS[z^{1/p^{\infty}}];\Z_p)
 \]
 is an equivalence which is compatible with $G_{K_\infty}$-action.
\end{lemma}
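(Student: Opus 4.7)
The strategy is to reduce the general categorical statement to the case $\mathcal{T}=\sO_K$ (so that $\mathcal{T}_{\sO_\Cu}=\sO_\Cu$), which is essentially~\cite[Proposition~11.7]{BMS2}. First I would note that $\sO_\Cu$ carries a natural $\bS[z^{1/p^\infty}]$-algebra structure via $z^{1/p^n}\mapsto \pi^{1/p^n}$, and this structure map is $G_{K_\infty}$-equivariant: by the definition $K_\infty=K(\pi^{1/p^\infty})$, the group $G_{K_\infty}$ fixes each $\pi^{1/p^n}$, while it acts trivially on $\bS[z^{1/p^\infty}]$. Base-changing along $\sO_K\to\sO_\Cu$ thus endows $\mathcal{T}_{\sO_\Cu}$ with a $G_{K_\infty}$-equivariant $\bS[z^{1/p^\infty}]$-linear structure, and the comparison map from absolute to relative THH is functorial, $S^1$-equivariant from the cyclic structure on both sides, and $G_{K_\infty}$-equivariant by naturality.

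The key technical step is to identify this comparison map with base change along $\THH(\sO_\Cu;\Z_p)\to\THH(\sO_\Cu/\bS[z^{1/p^\infty}];\Z_p)$; more precisely, to establish a natural equivalence
\[
\THH(\mathcal{T}_{\sO_\Cu}/\bS[z^{1/p^\infty}];\Z_p)\simeq \THH(\mathcal{T}_{\sO_\Cu};\Z_p)\otimes_{\THH(\sO_\Cu;\Z_p)}\THH(\sO_\Cu/\bS[z^{1/p^\infty}];\Z_p).
\]
I would verify this by viewing both sides as symmetric monoidal $\infty$-functors $\Cat(\sO_\Cu)\to\Mod_{\THH(\sO_\Cu/\bS[z^{1/p^\infty}];\Z_p)}(\Sp^{BS^1})$; they agree tautologically on the unit $\sO_\Cu$, so applying Proposition~\ref{compmonoid} to the full subcategory $\Catsat(\sO_\Cu)$ of dualizable objects yields the equivalence there, and since both functors commute with filtered colimits one extends to all of $\Cat(\sO_\Cu)$. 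Granted this identification, it suffices to show $\THH(\sO_\Cu;\Z_p)\to\THH(\sO_\Cu/\bS[z^{1/p^\infty}];\Z_p)$ is an equivalence; by a base-change argument analogous to the one leading to isomorphism \eqref{basechange}, this in turn reduces to $\THH(\bS[z^{1/p^\infty}];\Z_p)\simeq \bS[z^{1/p^\infty}]^{\wedge}_p$, which is \cite[Proposition~11.7]{BMS2}.

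The TP statement is then immediate by applying the Tate construction $(-)^{tS^1}$, which is lax symmetric monoidal and exact, hence sends the above equivalence to the desired equivalence on TP; the $G_{K_\infty}$-equivariance is preserved because this action commutes with $S^1$ and all functors involved are $G_{K_\infty}$-equivariant. The main obstacle I anticipate is the rigorous justification of the base-change identification for relative THH of stable $\infty$-categories: this is elementary for $\mathbb{E}_\infty$-rings via the cyclic bar construction, but for general $\mathcal{T}$ one must either invoke the symmetric monoidal formalism combined with Proposition~\ref{compmonoid} as above, or appeal to the categorical definition of relative THH in the style of Nikolaus--Scholze together with the fact that THH preserves filtered colimits of stable $\infty$-categories.
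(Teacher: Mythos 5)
Your overall strategy --- reduce to the statement that $\THH(\bS[z^{1/p^{\infty}}];\Z_p)\to\bS[z^{1/p^{\infty}}]^{\wedge}_p$ is an equivalence (\cite[Proposition~11.7]{BMS2}) and note that $G_{K_\infty}$ fixes the $\pi^{1/p^n}$ --- is exactly the paper's, and your treatment of the equivariance and of the passage to $\TP$ via $(-)^{tS^1}$ is fine. But the route you take to the key base-change identification has a genuine gap. The paper does not need any Künneth-type comparison over $\THH(\sO_\Cu;\Z_p)$: it simply uses the tautological formula for relative topological Hochschild homology,
\[
\THH(\mathcal{A}/R;\Z_p)\;\simeq\;\THH(\mathcal{A};\Z_p)\otimes_{\THH(R;\Z_p)}R^{\wedge}_p ,
\]
valid for \emph{every} $R$-linear category $\mathcal{A}$, applied with $R=\bS[z^{1/p^{\infty}}]$. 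The comparison map from absolute to relative $\THH$ is then literally base change along $\THH(\bS[z^{1/p^{\infty}}];\Z_p)\to\bS[z^{1/p^{\infty}}]^{\wedge}_p$, which is an equivalence, and one is done for arbitrary $\mathcal{T}$.

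Your detour instead asserts
\[
\THH(\mathcal{T}_{\sO_\Cu}/\bS[z^{1/p^\infty}];\Z_p)\simeq \THH(\mathcal{T}_{\sO_\Cu};\Z_p)\otimes_{\THH(\sO_\Cu;\Z_p)}\THH(\sO_\Cu/\bS[z^{1/p^\infty}];\Z_p)
\]
and justifies it by Proposition~\ref{compmonoid}. That proposition applies only when every object of the source is dualizable, i.e.\ on $\Catsat(\sO_\Cu)$; it says nothing about a general object of $\Cat(\sO_\Cu)$. The proposed extension ``both functors commute with filtered colimits, hence the equivalence extends to all of $\Cat(\sO_\Cu)$'' does not close the gap: a general idempotent-complete small stable $\sO_\Cu$-linear category is a filtered colimit of its \emph{finite-type} (compact) subcategories, not of smooth proper (dualizable) ones, so the equivalence on $\Catsat(\sO_\Cu)$ does not propagate to $\Cat(\sO_\Cu)$ by colimits. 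Since the lemma is stated for an arbitrary $\sO_K$-linear $\mathcal{T}$, your argument as written only proves the smooth proper case. The repair is the alternative you mention in your last sentence: use the definition of relative $\THH$ (equivalently, the displayed base-change formula over $\THH(R)$), under which the displayed identification over $\THH(\sO_\Cu;\Z_p)$ follows formally for all $\mathcal{T}$ by applying the formula to $\mathcal{T}_{\sO_\Cu}$ and to $\sO_\Cu$ and cancelling --- no appeal to Proposition~\ref{compmonoid} is needed.
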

\begin{proof}
The morphism $\bS[z^{1/p^{\infty}}] \to \sO_{K_\infty}~|~ z^{1/p^n}\mapsto \pi^{1/p^n}$ fits into the following commutative diagram
\begin{equation}\label{diagram}
\xymatrix{
 &\sO_K\ar[r] &\sO_{K_\infty} \ar[r] & \sO_{\Cu}\\
\bS\ar[r] &\bS[z]\ar[r] \ar[u]& \bS[z^{1/p^{\infty}}]\ar[u] &
}
\end{equation}
The diagram yields a map 
\[
\THH(\mathcal{T}_{\sO_\Cu};\Z_p) 
 \to \THH(\mathcal{T}_{\sO_\Cu}/\bS[z^{1/p^{\infty}}];\Z_p)\simeq \THH(\mathcal{T}_{\sO_\Cu};\Z_p)\otimes_{\THH(\bS[z^{1/p^{\infty}}];\Z_p)}\bS[z^{1/p^{\infty}}]^{\wedge}_{p}
\]
which is compatible with $S^1$-action. By \cite[Proposition 11.7]{BMS2}, the natural map $\THH(\bS[z^{1/p^{\infty}}];\Z_p) \to \bS[z^{1/p^{\infty}}]^{\wedge}_{p}$ is an equivalence. Since $\pi^{1/p^n}$ is in $K_\infty$ for any $n$, thus the equivalence is $G_{K_\infty}$-equivariant.
\end{proof}

\begin{prop}\label{THHOCperfect}
  For a smooth proper $\sO_K$-linear category $\sT$, $\THH(\sT_{\sO_\Cu}/\bS[z^{1/p^{\infty}}];\Z_p)$ is a perfect object in $\Mod_{\THH({\sO_\Cu}/\bS[z^{1/p^{\infty}}];\Z_p)}(\Sp^{BS^1})$. In particular, $\TP(\sT_{\sO_\Cu}/\bS[z^{1/p^{\infty}}];\Z_p)$ is perfect in $\Mod_{\TP({\sO_\Cu}/\bS[z^{1/p^{\infty}}];\Z_p)}(\Sp)$.
\end{prop}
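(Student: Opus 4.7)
The overall plan is to reduce the statement to the $\sO_K$-linear setting handled by Proposition~\ref{TPTCkunneth}, through a base-change argument in the spirit of the proof of Theorem~\ref{BKtheorem}. The direct approach used in Proposition~\ref{propdualisperf}, which invokes \cite[Theorem 2.15]{KunnethTP}, will not apply here: that criterion requires the homotopy groups of the structure ring to form a regular Noetherian graded ring of finite Krull dimension, whereas $\pi_{*}\THH(\sO_{\Cu}/\bS[z^{1/p^{\infty}}];\Z_{p})$ is of the form $\sO_{\Cu}[u]$ with $\sO_{\Cu}$ far from Noetherian. So the strategy is to transfer perfectness from the Noetherian base $\sO_{K}$ along a base change.

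Concretely, I would introduce the two symmetric monoidal functors
\[
F_{1},F_{2}:\Catsat(\sO_{K})\longrightarrow \Mod_{\THH(\sO_{\Cu}/\bS[z^{1/p^{\infty}}];\Z_{p})}(\Sp^{BS^{1}})
\]
defined by $F_{1}(\sT)=\THH(\sT_{\sO_{\Cu}}/\bS[z^{1/p^{\infty}}];\Z_{p})$ and
\[
F_{2}(\sT)=\THH(\sT/\bS[z];\Z_{p})\otimes_{\THH(\sO_{K}/\bS[z];\Z_{p})}\THH(\sO_{\Cu}/\bS[z^{1/p^{\infty}}];\Z_{p}).
\]
Both take the common value $\THH(\sO_{\Cu}/\bS[z^{1/p^{\infty}}];\Z_{p})$ on the unit $\sT=\sO_{K}$, and the commutative square relating $\bS[z]\to\sO_{K}$ and $\bS[z^{1/p^{\infty}}]\to\sO_{\Cu}$ (as in diagram~\eqref{diagram}) together with the universal property of relative THH yields a symmetric monoidal natural transformation $F_{2}\Rightarrow F_{1}$. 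Since $\sT$ is smooth and proper over $\sO_{K}$, it is dualizable in $\Cat(\sO_{K})$, and therefore Proposition~\ref{compmonoid} upgrades this transformation to an equivalence $F_{2}(\sT)\simeq F_{1}(\sT)$.

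By Proposition~\ref{TPTCkunneth}, $\THH(\sT/\bS[z];\Z_{p})$ is perfect in $\Mod_{\THH(\sO_{K}/\bS[z];\Z_{p})}(\Sp^{BS^{1}})$, and perfectness is preserved by the symmetric monoidal base-change functor $(-)\otimes_{\THH(\sO_{K}/\bS[z];\Z_{p})}\THH(\sO_{\Cu}/\bS[z^{1/p^{\infty}}];\Z_{p})$, so $F_{2}(\sT)\simeq F_{1}(\sT)=\THH(\sT_{\sO_{\Cu}}/\bS[z^{1/p^{\infty}}];\Z_{p})$ is perfect. For the ``In particular'' part, I would apply the lax symmetric monoidal exact functor $(-)^{tS^{1}}$ (cf.\ \cite[Corollary I.4.3]{NS18}); as in the proof of Proposition~\ref{TPTCkunneth}, the map~\eqref{laxmon} becomes an equivalence on perfect objects, so $\TP(\sT_{\sO_{\Cu}}/\bS[z^{1/p^{\infty}}];\Z_{p})$ is perfect over $\TP(\sO_{\Cu}/\bS[z^{1/p^{\infty}}];\Z_{p})$. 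The main technical point to be careful about is that the comparison $F_{2}\Rightarrow F_{1}$ is genuinely a symmetric monoidal natural transformation of $\infty$-functors (and not merely a pointwise map); since $\sO_{\Cu}\neq\sO_{K}\otimes_{\bS[z]}\bS[z^{1/p^{\infty}}]$, the cleanest workaround is to argue via the absolute THH and then invoke Lemma~\ref{BMSTHHlemma} to pass between absolute and relative THH after the comparison has been set up.
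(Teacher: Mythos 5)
Your argument is correct, and its core is the same reduction the paper makes: perfectness is imported from Proposition~\ref{TPTCkunneth} and transported along exact symmetric monoidal base-change functors, rather than re-running the regularity criterion of Proposition~\ref{propdualisperf} over the non-Noetherian ring $\sO_\Cu$ (your remark on why that criterion fails here matches the paper's implicit reasoning). Where you diverge is in how the base-changed object is identified with $\THH(\sT_{\sO_\Cu}/\bS[z^{1/p^{\infty}}];\Z_p)$: the paper does this in two explicit steps, first applying $-\otimes_{\THH(\sO_K/\bS[z];\Z_p)}\THH(\sO_\Cu/\bS[z];\Z_p)$ (base change of THH along $\sO_K\to\sO_\Cu$) and then $-\otimes_{\THH(\bS[z^{1/p^{\infty}}]/\bS[z];\Z_p)}\bS[z^{1/p^{\infty}}]^{\wedge}_p$ (change of base ring $\bS[z]\to\bS[z^{1/p^{\infty}}]$, using $\THH(\bS[z^{1/p^{\infty}}];\Z_p)\simeq\bS[z^{1/p^{\infty}}]^{\wedge}_p$), each time using only the standard base-change equivalences for relative THH; you instead perform a single base change along $\THH(\sO_K/\bS[z];\Z_p)\to\THH(\sO_\Cu/\bS[z^{1/p^{\infty}}];\Z_p)$ and identify it with $F_1(\sT)$ via Proposition~\ref{compmonoid} applied to a symmetric monoidal transformation $F_2\Rightarrow F_1$ on $\Catsat(\sO_K)$, exactly the device the paper deploys later (e.g.\ for the map \eqref{AKAMO} in Theorem~\ref{TPTCcomp}). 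Your route buys uniformity with those later arguments and in effect proves the THH-level comparison underlying \eqref{AKAMO}, but it carries the extra burden you yourself flag: one must check that $F_1$ is genuinely symmetric monoidal (which rests on the K\"unneth/base-change property of $\THH(-/\bS[z^{1/p^{\infty}}];\Z_p)$ on $\Cat(\sO_\Cu)$) and that the transformation is monoidal, whereas the paper's two-step factorization needs only exactness and the unit-to-unit behaviour of the base-change functors. Your closing device of passing through absolute THH via Lemma~\ref{BMSTHHlemma} is also consistent with how the paper handles the discrepancy $\sO_\Cu\neq\sO_K\otimes_{\bS[z]}\bS[z^{1/p^{\infty}}]$, and the deduction of the $\TP$ statement via the lax symmetric monoidal exact functor $(-)^{tS^1}$ is the same as in Proposition~\ref{TPTCkunneth}.
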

\begin{proof}
  We already know $\THH(\sT/\bS[z];\Z_p)$ is a perfect object in $\Mod_{\THH(\sO_K/\bS[z];\Z_p)}(\Sp^{BS^1})$ (see Proposition~\ref{TPTCkunneth}). Besides, the functor 
  \[
  -\otimes_{\THH(\sO_K/\bS[z];\Z_p)}\THH(\sO_{\Cu}/\bS[z];\Z_p) :\Mod_{\THH(\sO_K/\bS[z];\Z_p)}(\Sp^{BS^1}) \to \Mod_{\THH(\sO_{\Cu}/\bS[z];\Z_p)}(\Sp^{BS^1})
  \]
  is exact and sends $\THH(\sT/\bS[z];\Z_p)$ to $\THH(\sT_{\sO_\Cu}/\bS[z];\Z_p)$, $\THH(\sT_{\sO_\Cu}/\bS[z];\Z_p)$ is a perfect object in $\Mod_{\THH(\sO_{\Cu}/\bS[z];\Z_p)}(\Sp^{BS^1})$. Since the functor 
  \[
  -\otimes_{\THH(\bS[z^{1/p^{\infty}}]/\bS[z];\Z_p)}\bS[z^{1/p^{\infty}}]^{\wedge}_p :\Mod_{\THH(\sO_{\Cu}/\bS[z];\Z_p)}(\Sp^{BS^1}) \to \Mod_{\THH(\sO_{\Cu}/\bS[z^{1/p^\infty}];\Z_p)}(\Sp^{BS^1})
  \]
  is exact and sends $\THH(\sT_{\sO_\Cu}/\bS[z];\Z_p)$ to $\THH(\sT_{\sO_\Cu}/\bS[z^{1/p^{\infty}}];\Z_p)$, thus $\THH(\sT_{\sO_\Cu}/\bS[z^{1/p^{\infty}}];\Z_p)$ is a perfect object in $\Mod_{\THH(\sO_{\Cu}/\bS[z^{1/p^\infty}];\Z_p)}(\Sp^{BS^1})$.
\end{proof}

Lemma~\ref{BMSTHHlemma} and Proposition~\ref{THHOCperfect} imply the following.
\begin{prop}\label{THHOCbSperf}
  For a smooth proper $\sO_K$-linear category $\sT$, $\THH(\sT_{\sO_\Cu};\Z_p)$ is a perfect object in $\Mod_{\THH({\sO_\Cu};\Z_p)}(\Sp^{BS^1})$.
\end{prop}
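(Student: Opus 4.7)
The plan is a direct two-step reduction: combine the base-change equivalence of Lemma~\ref{BMSTHHlemma} with the perfectness established in Proposition~\ref{THHOCperfect}.

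First I would apply Lemma~\ref{BMSTHHlemma} to the $\sO_K$-linear category $\sO_K$ itself (viewed as $\perf(\sO_K)$), obtaining a $G_{K_\infty}$-equivariant equivalence of $S^1$-equivariant $\mathbb{E}_\infty$-algebras
\[
\THH(\sO_\Cu;\Z_p) \iso \THH(\sO_\Cu/\bS[z^{1/p^{\infty}}];\Z_p).
\]
This upgrades to an equivalence of symmetric monoidal $\infty$-categories of $S^1$-equivariant modules
\[
\Mod_{\THH(\sO_\Cu;\Z_p)}(\Sp^{BS^1}) \iso \Mod_{\THH(\sO_\Cu/\bS[z^{1/p^{\infty}}];\Z_p)}(\Sp^{BS^1}),
\]
which in particular identifies the respective notions of perfect object (perfectness is an intrinsic property of the symmetric monoidal stable $\infty$-category, since the unit is preserved).

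Next I would apply Lemma~\ref{BMSTHHlemma} again, this time to $\sT$, to get an equivalence
\[
\THH(\sT_{\sO_\Cu};\Z_p) \iso \THH(\sT_{\sO_\Cu}/\bS[z^{1/p^{\infty}}];\Z_p)
\]
that is compatible with the $S^1$-action and with the module structure over the algebra equivalence above. Under the identification of module categories from the previous paragraph, the left-hand side corresponds to the right-hand side as objects of $\Mod_{\THH(\sO_\Cu/\bS[z^{1/p^\infty}];\Z_p)}(\Sp^{BS^1})$.

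Finally, Proposition~\ref{THHOCperfect} tells us that $\THH(\sT_{\sO_\Cu}/\bS[z^{1/p^{\infty}}];\Z_p)$ is perfect in $\Mod_{\THH(\sO_\Cu/\bS[z^{1/p^{\infty}}];\Z_p)}(\Sp^{BS^1})$. Transporting this across the symmetric monoidal equivalence of module categories, we conclude that $\THH(\sT_{\sO_\Cu};\Z_p)$ is perfect in $\Mod_{\THH(\sO_\Cu;\Z_p)}(\Sp^{BS^1})$. There is no real obstacle here; the only thing to be careful about is that the identification in Lemma~\ref{BMSTHHlemma} genuinely respects the algebra/module structures (so that the two module categories, together with their units, are identified symmetric-monoidally), which is clear from the construction since the equivalence is induced by base change along $\bS[z^{1/p^\infty}] \to \sO_\Cu$.
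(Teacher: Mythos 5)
Your proposal is correct and is essentially the paper's own argument: the paper deduces Proposition~\ref{THHOCbSperf} directly by combining Lemma~\ref{BMSTHHlemma} (applied both to the base and to $\sT$, giving the identification $\THH(\sT_{\sO_\Cu};\Z_p)\simeq \THH(\sT_{\sO_\Cu}/\bS[z^{1/p^{\infty}}];\Z_p)$ over $\THH(\sO_\Cu;\Z_p)\simeq\THH(\sO_\Cu/\bS[z^{1/p^{\infty}}];\Z_p)$) with the perfectness statement of Proposition~\ref{THHOCperfect}. Your extra care about the equivalence respecting the algebra/module structures, so that perfectness transports across the induced equivalence of module categories, is exactly the implicit content of the paper's one-line deduction.
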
 

The following is a non-commutative version of the comparison theorem between $R\Gamma_\mS$ and $R\Gamma_{\Ainf}$ in \cite[Theorem 1.2 (1)]{BMS2}
\begin{thm}\label{TPTCcomp}
Let $\sT$ be a smooth proper $\sO_K$-linear category. Then there is a natural number $n$ satisfying that there is a $G_{K_\infty}$-equivariant isomorphism
\[
\pi_i \TCn(\sT/\bS[z];\Z_p)\otimes_{\mS,\ol{\phi}} \Ainf \simeq \pi_i \TP(\sT_{\sO_\Cu}/\bS[z^{1/p^{\infty}}];\Z_p)\simeq \pi_i \TP(\sT_{\sO_\Cu};\Z_p)
\]
for any $i \geq n$, where $\ol{\phi}$ is the map which sends $z$ to $[\pi^{\flat}]^p$ and is the Frobenius on $W$, and $g\in G_{K_\infty}$ acts on $1\otimes g$ on left hand side.
\end{thm}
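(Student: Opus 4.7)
The second isomorphism in the statement is precisely Lemma~\ref{BMSTHHlemma}, so the task is to produce the first isomorphism with its $G_{K_\infty}$-equivariance. My plan is to exploit the commutative diagram \eqref{SAinfdiag}, which writes $\ol{\phi} = \phi\circ\varphi$, and to handle the two factors separately. For the Frobenius factor, the proof of Theorem~\ref{BKtheorem} already produces, for $i \geq n$, the $\mS$-linear isomorphism
\[
\pi_i\TCn(\sT/\bS[z];\Z_p) \otimes_{\mS,\varphi}\mS \simeq \pi_i\TP(\sT/\bS[z];\Z_p),
\]
and tensoring further along $\phi:\mS\to\Ainf$ reduces the theorem to establishing, for $i\geq n$, an isomorphism
\[
\pi_i\TP(\sT/\bS[z];\Z_p) \otimes_{\mS,\phi}\Ainf \simeq \pi_i\TP(\sT_{\sO_\Cu}/\bS[z^{1/p^\infty}];\Z_p).
\]

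For this remaining base-change step I would apply the symmetric-monoidal comparison technique used repeatedly in the preceding section. Consider the two $\infty$-functors from $\Catsat(\sO_K)$ to $\Mod_{\TP(\sO_\Cu/\bS[z^{1/p^\infty}];\Z_p)}(\Sp)$ sending $\sT$ to
\[
\TP(\sT/\bS[z];\Z_p) \otimes_{\TP(\sO_K/\bS[z];\Z_p)} \TP(\sO_\Cu/\bS[z^{1/p^\infty}];\Z_p) \qquad \text{and to} \qquad \TP(\sT_{\sO_\Cu}/\bS[z^{1/p^\infty}];\Z_p)
\]
respectively. By Proposition~\ref{TPTCkunneth} together with extension of scalars the first is symmetric monoidal, and by Proposition~\ref{THHOCbSperf} combined with a K\"unneth argument for $\sO_\Cu$-linear categories the second is also symmetric monoidal; both land in perfect, hence dualizable, modules. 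The natural comparison map between them is a symmetric monoidal transformation, so Proposition~\ref{compmonoid} gives an equivalence of spectra for every $\sT \in \Catsat(\sO_K)$. On $\pi_0$, \cite[Proposition 11.10]{BMS2} combined with the BMS calculation $\pi_0\TP(\sO_\Cu;\Z_p)\simeq \Ainf$ identifies the structural map $\mS = \pi_0\TP(\sO_K/\bS[z];\Z_p) \to \pi_0\TP(\sO_\Cu/\bS[z^{1/p^\infty}];\Z_p) = \Ainf$ with $\phi$, which is exactly what is needed.

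The final step is to pass from the spectrum-level equivalence to the claimed isomorphism on $\pi_i$. Perfection of $\TP(\sT/\bS[z];\Z_p)$ over $\TP(\sO_K/\bS[z];\Z_p)$ together with the fact that $\mS$ is a two-dimensional regular complete local ring gives $\pi_i\TP(\sT/\bS[z];\Z_p)$ finite Tor-dimension over $\mS$, and since $\sigma$ is invertible in $\pi_*\TP(\sO_K/\bS[z];\Z_p) \simeq \mS[\sigma^{\pm}]$ the relevant Tor computation reduces to ordinary Tor over $\mS$; for $i \geq n$ (after possibly enlarging $n$) the higher Tor contributions are pushed into adjacent homotopy degrees, leaving the plain tensor identification claimed. $G_{K_\infty}$-equivariance is preserved throughout: $G_{K_\infty}$ acts trivially on $\mS$ and $\sO_K$, fixes each $\pi^{1/p^n}$ and hence $[\pi^\flat]$, so $\ol{\phi}$ is equivariant, and Lemma~\ref{BMSTHHlemma} already records the equivariance of the identification $\TP(\sT_{\sO_\Cu}/\bS[z^{1/p^\infty}];\Z_p) \simeq \TP(\sT_{\sO_\Cu};\Z_p)$. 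The main obstacle I anticipate is precisely this last homotopy-group descent: cleanly arguing that higher Tor contributions vanish (or can be absorbed into homotopy degrees below $n$) in the range $i \geq n$, using the Breuil-Kisin structure on $\pi_i\TCn(\sT/\bS[z];\Z_p)$ supplied by Theorem~\ref{BKtheorem} and the invertibility of $u$ from Lemma~\ref{periodlemma}(2).
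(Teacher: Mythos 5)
Your overall strategy is essentially the paper's: the paper also reduces the statement to a symmetric monoidal comparison of functors on $\Catsat(\sO_K)$ valued in $\Mod_{\TP(\sO_\Cu/\bS[z^{1/p^\infty}];\Z_p)}(\Sp)$ (via Proposition~\ref{TPTCkunneth}, Proposition~\ref{THHOCperfect} and Proposition~\ref{compmonoid}), and then descends to homotopy groups. The only structural difference is that the paper does not factor $\ol{\phi}=\phi\circ\varphi$: it inverts $u$ and works with the single composite $\underline{\varphi}^{hS^1}_{\sT}:\TCn(\sT/\bS[z];\Z_p)[\frac{1}{u}]\to\TP(\sT_{\sO_\Cu}/\bS[z^{1/p^\infty}];\Z_p)$, which on the base is the $\ol{\phi}$-linear map $\mS[u^{\pm}]\to\Ainf[\sigma^{\pm}]$, and then uses Lemma~\ref{periodlemma}(2) to remove the localization at $u$ for $i\geq n$. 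Your two-step version through \eqref{TCnKisin} and a base change along $\phi$ is a harmless refactoring of the same argument, and your treatment of $G_{K_\infty}$-equivariance (triviality on $\mS$, fixing $[\pi^\flat]$, Lemma~\ref{BMSTHHlemma} for the second identification) matches the paper.

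The genuine gap is exactly the step you flag at the end. The mechanism you propose — that higher Tor contributions are "pushed into adjacent homotopy degrees" and can be discarded for $i\geq n$ — cannot work: everything in sight is a module over the $2$-periodic ring $\TP(\sO_K/\bS[z];\Z_p)$ with $\pi_*\simeq\mS[\sigma^{\pm}]$, so the $E_2$-page $\operatorname{Tor}^{\mS[\sigma^{\pm}]}_s\bigl(\pi_*\TP(\sT/\bS[z];\Z_p),\Ainf[\sigma^{\pm}]\bigr)$ is itself $2$-periodic in the internal grading, and any nonvanishing higher Tor would contribute in \emph{every} degree of a fixed parity; no choice of $n$ makes it vanish in the range $i\geq n$. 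What actually closes the argument — and what the paper invokes — is flatness: the map $\mS[u^{\pm}]\to\Ainf[\sigma^{\pm}]$ (equivalently $\phi:\mS\to\Ainf$, $z\mapsto[\pi^\flat]$, hence also $\ol{\phi}$) is flat by \cite[Lemma 4.30]{BMS1}, so the Tor spectral sequence collapses and the plain tensor identification holds on $\pi_i$ for \emph{all} $i$; the restriction $i\geq n$ is needed only to pass from $\TCn[\frac{1}{u}]$ back to $\TCn$ (Lemma~\ref{periodlemma}(2)), or in your version only to invoke \eqref{TCnKisin}. With that flatness statement inserted in place of the degree-shifting claim, your proof goes through.
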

\begin{proof}
For a smooth proper $\sO_K$-linear category $\sT$, consider the following morphism
\[
\TCn(\sT/\bS[z];\Z_p) \overset{\varphi^{hS^1}_{\sT}}{\to} \TP(\sT/\bS[z];\Z_p) \to \TP(\sT_{\sO_\Cu}/\bS[z^{1/p^{\infty}}];\Z_p)
\]
where the second map is given by the diagram~\eqref{diagram}. The map sends $u$ to $\sigma$ and $u$ is an invertible element in $\TP(\sT_{\sO_\Cu}/\bS[z^{1/p^{\infty}}];\Z_p)$, we have a morphism
\begin{equation}
  \underline{\varphi}^{hS^1}_{\sT}:\TCn(\sT/\bS[z];\Z_p)[\frac{1}{u}] \to \TP(\sT_{\sO_\Cu}/\bS[z^{1/p^{\infty}}];\Z_p).
\end{equation}
Let us prove that the map $\underline{\varphi}^{hS^1}_{\sT}$ yields an equivalence of $\TP(\sT_{\sO_\Cu}/\bS[z^{1/p^{\infty}}];\Z_p)$-module:
\begin{equation}\label{AKAMO}
\TCn(\sT/\bS[z];\Z_p)[\frac{1}{u}] \otimes_{\TCn(\sO_K/\bS[z];\Z_p)[\frac{1}{u}],\underline{\varphi}^{hS^1}_{\sO_K}}\TP(\sO_{\Cu}/\bS[z^{1/p^{\infty}}];\Z_p)\simeq\TP(\sT_{\sO_\Cu}/\bS[z^{1/p^{\infty}}];\Z_p)
\end{equation}
which is compatible with $G_{K_\infty}$-action. By Proposition~\ref{TPTCkunneth}, the left side of the map \eqref{AKAMO} yields a symmetric monoidal functor from $\Catsat(\sO_K)$ to $\Mod_{\TP(\sO_{\Cu}/\bS[z^{1/p^{\infty}}];\Z_p)}(\Sp)$. By Proposition~\ref{THHOCperfect}, the right side of the map \eqref{AKAMO} also yields a symmetric monoidal functor from $\Catsat(\sO_K)$ to $\Mod_{\TP(\sO_{\Cu}/\bS[z^{1/p^{\infty}}];\Z_p)}(\Sp)$. By \cite[section 11]{BMS2}, on homotopy groups, the morphism
\[
\underline{\varphi}^{hS^1}_{\sO_K}:\pi_*\TCn(\sO_K/\bS[z];\Z_p)[\frac{1}{u}] \to \pi_*\TP(\sO_{\Cu}/\bS[z^{1/p^\infty}];\Z_p)
\]
is given by $\mS[u^{\pm}] \to \Ainf[\sigma^{\pm}]$ which is $\ol{\phi}$-linear and sends $u$ to $\sigma$, thus it is flat by \cite[Lemma 4.30]{BMS1}. We now have an $G_{K_\infty}$-equivariant isomorphism
\[
\pi_i\TCn(\sT/\bS[z];\Z_p)[\frac{1}{u}] \otimes_{\mS,\ol{\phi}} \Ainf\simeq \pi_i\TP(\sT_{\sO_\Cu}/\bS[z^{1/p^\infty}];\Z_p).
\]
for any $i$. By Lemma \ref{periodlemma} (2), we obtain the claim.
\end{proof}

\subsection{The comparison between $\TCn(\sT/\bS[z];\Z_p)$ and $\TP(\sT_{k};\Z_p)$} In this section, for a smooth proper $\sO_K$-linear category $\sT$, we will compare $\TCn(\sT/\bS[z];\Z_p)$ with $\TP(\sT_{k};\Z_p)$. There exists a Cartesian diagram of $\mathbb{E}_\infty$-ring:
\[
\xymatrix{
k & \sO_K \ar[l] \\
\bS \ar[u] & \bS[z] \ar[u] \ar[l]
}
\]
For an $\sO_K$-linear stable $\infty$-category $\sT$, the diagram yields morphisms
\[
\TCn(\sT/\bS[z];\Z_p) \to \TCn(\sT_k;\Z_p)
\]
and 
\begin{equation}\label{TCnTOkcomp}
\TCn(\sT/\bS[z];\Z_p) \to \TCn(\sT_k;\Z_p) \overset{\varphi^{hS^1}_k}{\to} \TP(\sT_k;\Z_p).
\end{equation}
\begin{thm}\label{TCnTPcomp3}
Let $\sT$ be a smooth proper $\sO_K$-linear category. Then there is a natural number $n$ satisfying that the morphism \eqref{TCnTOkcomp} an isomorphism
\[
\pi_i \TCn(\sT/\bS[z];\Z_p)[\frac{1}{p}]\otimes_{\mS [\frac{1}{p}],\tilde{\phi}} K_0 \to \pi_i \TP(\sT_k;\Z_p)[\frac{1}{p}]
\]
for any $i \geq n$, where $\tilde{\phi}$ is the map which sends $z$ to $0$ and Frobenius on $W$.
\end{thm}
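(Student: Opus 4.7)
The plan is to follow the strategy of Theorems~\ref{BKtheorem} and \ref{TPTCcomp}: realize both sides as the values of symmetric monoidal functors $\Catsat(\sO_K)\to\Mod_{\TP(k;\Z_p)[\frac{1}{p}]}(\Sp)$, verify agreement on the unit $\sT=\sO_K$ from the explicit computation in \cite[Proposition 11.10]{BMS2}, and invoke Proposition~\ref{compmonoid}. To set this up, note that the composite
\[
\TCn(\sT/\bS[z];\Z_p)\to \TCn(\sT_k;\Z_p)\xrightarrow{\varphi^{hS^1}_k}\TP(\sT_k;\Z_p)
\]
is natural in $\sT$ and, for $\sT=\sO_K$, induces on $\pi_0$ precisely $\tilde{\phi}\colon\mS\to W$ (the reduction $z\mapsto 0$ followed by the Frobenius of $W$). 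Hence $\TP(k;\Z_p)[\frac{1}{p}]$ is naturally a $\TCn(\sO_K/\bS[z];\Z_p)$-algebra, and I would define
\begin{align*}
F_1(\sT) &= \TCn(\sT/\bS[z];\Z_p)\otimes_{\TCn(\sO_K/\bS[z];\Z_p)}\TP(k;\Z_p)[\tfrac{1}{p}], \\
F_2(\sT) &= \TP(\sT_k;\Z_p)[\tfrac{1}{p}].
\end{align*}

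Both $F_1$ and $F_2$ are symmetric monoidal into perfect $\TP(k;\Z_p)[\frac{1}{p}]$-modules: for $F_1$ by Proposition~\ref{TPTCkunneth} and base change, and for $F_2$ by the obvious $k$-linear analogue of Proposition~\ref{TPTCkunneth}, which uses Proposition~\ref{propdualisperf} together with the fact that $\pi_\ast\TP(k;\Z_p)=W[\sigma^{\pm}]$ is a regular noetherian ring of finite Krull dimension concentrated in even degrees. The composite above produces a monoidal natural transformation $F_1\Rightarrow F_2$ which on the unit $\sT=\sO_K$ reduces to the identity of $\TP(k;\Z_p)[\frac{1}{p}]$. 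Applying Proposition~\ref{compmonoid} therefore yields an equivalence
\[
\TCn(\sT/\bS[z];\Z_p)[\tfrac{1}{p}]\otimes_{\TCn(\sO_K/\bS[z];\Z_p)[\frac{1}{p}]}\TP(k;\Z_p)[\tfrac{1}{p}]\simeq\TP(\sT_k;\Z_p)[\tfrac{1}{p}]
\]
for every smooth proper $\sO_K$-linear $\sT$.

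Finally, I would descend to $\pi_i$. Since $z$ is regular in the integral domain $\mS[\frac{1}{p}]$ and $K_0=\mS[\frac{1}{p}]/(z)$ after the (iso) Frobenius twist factor of $\tilde{\phi}$, the two-term free resolution of $K_0$ gives, via the Tor spectral sequence, a short exact sequence
\[
0\to \pi_i\TCn(\sT/\bS[z];\Z_p)[\tfrac{1}{p}]\otimes_{\mS[\frac{1}{p}],\tilde{\phi}} K_0\to \pi_i\TP(\sT_k;\Z_p)[\tfrac{1}{p}]\to \bigl(\pi_{i-1}\TCn(\sT/\bS[z];\Z_p)[\tfrac{1}{p}]\bigr)[z]\to 0.
\]
The remaining essential step, and the only real obstacle, is to show the rightmost $z$-torsion term vanishes for $i$ large. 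The key input is the Frobenius structure: by Theorem~\ref{BKtheorem}, the module $M:=\pi_{i-1}\TCn(\sT/\bS[z];\Z_p)[\frac{1}{p}]$ is finitely generated over $\mS[\frac{1}{p}]$ and admits an isomorphism $\varphi^\ast M\simeq M$ (because $E$ is a unit in $\mS[\frac{1}{p}]$). Localizing at the height-one prime $(z)$ gives a discrete valuation ring on which $\varphi$ acts as the degree-$p$ endomorphism $z\mapsto z^p$; hence any non-zero $z$-torsion submodule $T$ would satisfy $\mathrm{length}(\varphi^\ast T)=p\cdot\mathrm{length}(T)$, contradicting $\varphi^\ast T\simeq T$. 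Therefore $M$ is $z$-torsion-free for $i-1\geq n$, the Tor$_1$ term vanishes, and (after replacing $n$ by $n+1$) the asserted isomorphism follows.
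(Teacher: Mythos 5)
Your outline is viable and genuinely differs from the paper's proof at one point: you base-change $\TCn(\sT/\bS[z];\Z_p)$ directly along the composite to $\TP(\sT_k;\Z_p)[\frac{1}{p}]$ and then kill the resulting $\operatorname{Tor}_1$ term by hand, whereas the paper first replaces $\TCn$ by $\TP(\sT/\bS[z];\Z_p)$ via the cyclotomic--Frobenius isomorphism \eqref{TCnKisin} (valid for $i\geq n$) and then quotes \cite[Proposition 4.3]{BMS1} to see that $\pi_i\TCn(\sT/\bS[z];\Z_p)[\frac{1}{p}]$, hence by $2$-periodicity every $\pi_i\TP(\sT/\bS[z];\Z_p)[\frac{1}{p}]$, is finite \emph{free} over $\mS[\frac{1}{p}]$; flatness then makes the K\"unneth/Tor computation degenerate and no torsion analysis is needed. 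Your torsion-killing step is in effect a re-proof of the special case of \cite[Proposition 4.3]{BMS1} that is used. Both arguments rely on Proposition~\ref{compmonoid} for the spectrum-level equivalence and both need symmetric monoidality of $\sT\mapsto\TP(\sT_k;\Z_p)[\frac{1}{p}]$; for that, the regularity criterion should be applied to $\pi_*\THH(k;\Z_p)\simeq k[u]$ in $\Mod_{\THH(k;\Z_p)}(\Sp^{BS^1})$ (as in Proposition~\ref{propdualisperf}, and as Proposition~\ref{THHolkperf} does for $\ol{k}$), not to $\pi_*\TP(k;\Z_p)$.

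Two steps need repair. First, $E$ is \emph{not} a unit in $\mS[\frac{1}{p}]$ (indeed $\mS[\frac{1}{p}]/(E)\simeq K\neq 0$), so the Breuil--Kisin structure does not give a global isomorphism $\varphi^*M\simeq M$ over $\mS[\frac{1}{p}]$; it gives one only after inverting $E$. Your length argument survives because $E$ is Eisenstein, hence $E\notin(z)$ and $E$ becomes a unit in the localization $\mS[\frac{1}{p}]_{(z)}$; since $\operatorname{Tor}_1^{\mS[\frac{1}{p}]}(M,K_0)$ is the $z$-torsion of $M$ and is supported at $(z)$, and $\varphi^{-1}((z))=(z)$ with $\varphi$ flat on the DVR $\mS[\frac{1}{p}]_{(z)}$, the multiplication-of-lengths argument does show $M_{(z)}$ is torsion free, which suffices --- but the isomorphism must be stated locally at $(z)$, not globally. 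Second, your short exact sequence is not the Tor sequence over $\mS[\frac{1}{p}]$: the relative tensor product is over $\TCn(\sO_K/\bS[z];\Z_p)$, whose homotopy ring is $\mS[u,v]/(uv-E)$, and since $u$ maps to the invertible class $\sigma$ in $\pi_*\TP(k;\Z_p)[\frac{1}{p}]$ the Tor spectral sequence produces the $u$-localized groups $\pi_*\TCn(\sT/\bS[z];\Z_p)[\frac{1}{p}][\frac{1}{u}]$ in both the $\operatorname{Tor}_0$ and $\operatorname{Tor}_1$ terms. One must then invoke Lemma~\ref{periodlemma}~(2) to identify these with $\pi_i$ and $\pi_{i-1}$ for $i-1\geq n$, which is consistent with your final shift of $n$ but has to be carried out explicitly. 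With these repairs your argument is correct, at the cost of redoing work that the paper imports wholesale from \cite[Proposition 4.3]{BMS1}.
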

\begin{proof}
  The morphism \eqref{TCnTOkcomp} induces a commutative diagram
  \[
  \xymatrix{
  \TCn(\sT/\bS[z];\Z_p) \ar[d]_{\varphi^{hS^1}_{\sT}} \ar[r]& \TCn(\sT_k;\Z_p) \ar[d]^{\varphi^{hS^1}_k} \\
    \TP(\sT/\bS[z];\Z_p)   \ar[r]&\TP(\sT_k;\Z_p) 
  }
  \]
Firstly, we prove the morphism $\TP(\sT/\bS[z];\Z_p) \to\TP(\sT_k;\Z_p) $ induces an isomorphism
  \[
  \pi_i \TP(\sT/\bS[z];\Z_p)[\frac{1}{p}]\otimes_{\mS [\frac{1}{p}],\gamma} K_0 \to \pi_i \TP(\sT_k;\Z_p)[\frac{1}{p}]
  \]
  for any $i$, where $\gamma$ is a $W$-linear and sends $z$ to $0$. By Theorem~\ref{BKtheorem} and \cite[Proposition 4.3]{BMS1}, there is $n$ such that for any $i\geq n$, the homotopy group $\pi_i\TCn(\sT/\bS[z];\Z_p)$ is a Breuil-Kisin module, and $\pi_i\TCn(\sT/\bS[z];\Z_p)[\frac{1}{p}]$ is a finite free $\mS[\frac{1}{p}]$-module. By an isomorphism~\ref{TCnKisin} and the fact that $\TP(\sT/\bS[z];\Z_p)$ is $2$-periodic, we obtain that $\pi_i\TP(\sT/\bS[z];\Z_p)[\frac{1}{p}]$ is a finite free $\mS[\frac{1}{p}]$-module for any $i$. Thus, on homotopy groups, one obtains an isomorphism 
  \begin{align}\label{flatmap}
 \pi_*\TP(\sT/\bS[z];\Z_p)[\frac{1}{p}]\simeq \pi_0\TP(\sT/\bS[z];\Z_p)[\frac{1}{p}]\otimes_{\mS[\frac{1}{p}]} \pi_*\TP(\sO_K/\bS[z];\Z_p)[\frac{1}{p}] \\
 \bigoplus \pi_1\TP(\sT/\bS[z];\Z_p)[\frac{1}{p}]\otimes_{\mS[\frac{1}{p}]}\pi_*\TP(\sO_K/\bS[z];\Z_p)[\frac{1}{p}],\nonumber
  \end{align}
 and we see that $\pi_*\TP(\sT/\bS[z];\Z_p)[\frac{1}{p}]$ is a flat graded $\pi_*\TP(\sO_K/\bS[z];\Z_p)[\frac{1}{p}]$-module. Let us prove the morphism
  \begin{equation}\label{ACOMU}
  \TP(\sT/\bS[z];\Z_p)[\frac{1}{p}] \otimes_{\TP(\sO_K/\bS[z];\Z_p)[\frac{1}{p}]} \TP(k;\Z_p)[\frac{1}{p}] \to \TP(\sT_k;\Z_p)[\frac{1}{p}]
  \end{equation}
  is an equivalence. Both sides of the map \eqref{ACOMU} yield symmetric monoidal functors from $\Catsat(\sO_K)$ to $\Mod_{\TP(k;\Z_p)[\frac{1}{E}]}(\Sp)$, thus by Proposition~\ref{compmonoid}, we know the map \eqref{ACOMU} is an equivalence. By the isomorphism \eqref{flatmap} and the fact that $\TP(\sT_{\sO_\Cu};\Z_p)$ is $2$-periodic, on homotopy groups, we have an isomorphism
\begin{equation}\label{TPTPcomp}
\pi_i \TP(\sT/\bS[z];\Z_p)[\frac{1}{p}]\otimes_{\mS [\frac{1}{p}],\gamma} K_0 \to \pi_i \TP(\sT_k;\Z_p)[\frac{1}{p}]
\end{equation}
for any $i$. Combine the isomorphism~\eqref{TCnKisin} with an equality $\gamma \circ \varphi=\tilde{\phi}$, there is an isomorphism
\begin{eqnarray*}
\pi_i \TCn(\sT/\bS[z];\Z_p)[\frac{1}{p}]\otimes_{\mS [\frac{1}{p}],\tilde{\phi}} K_0 & \simeq & (\pi_i \TCn(\sT/\bS[z];\Z_p)[\frac{1}{p}]\otimes_{\mS [\frac{1}{p}],\phi} \mS[\frac{1}{p}]) \otimes_{\mS[\frac{1}{p}],\tau} K_0 \\
&\overset{\eqref{TCnKisin}}{\simeq} & \pi_i \TP(\sT/\bS[z];\Z_p)[\frac{1}{p}]\otimes_{\mS [\frac{1}{p}],\tau} K_0 \\
&\overset{\eqref{TPTPcomp}}{\simeq} &\pi_i \TP(\sT_k;\Z_p)[\frac{1}{p}].
\end{eqnarray*}
for any $i\geq n$.
\end{proof}

\subsection{The comparison theorem between $\TCn(\sT/\bS[z];\Z_p)$ and $\LK K(\sT_{\Cu})$} Firstly, we prove K\"unneth formula of $K(1)$-local $K$ theory for $\Cu$-linear categories that admit a geometric realization.
\begin{prop}\label{geomKunnethprop}
For $\Cu$-linear categories $\sT_1$, $\sT_2$ which admit geometric realization, the natural map
\begin{equation}\label{geomKunneth}
\LK K(\sT_1) \otimes_{\LK K(\Cu)}\LK K(\sT_2) \to \LK (\sT_1\otimes_{\Cu} \sT_2)
\end{equation}
is an equivalence.
\end{prop}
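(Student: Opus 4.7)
The plan is to reduce the Künneth formula to the scheme case via the admissible inclusion from the geometric realization, and then verify Künneth for $\perf$ of $\Cu$-schemes using Thomason's spectral sequence \eqref{bakido} together with the classical Künneth theorem for $p$-adic \'etale cohomology over the algebraically closed base $\Cu$.

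First I would exploit the admissibility of the inclusions $\sT_i \hookrightarrow \perf(X_i)$. These yield semi-orthogonal decompositions $\perf(X_i) \simeq \langle \sT_i, \sT_i^{\perp}\rangle$, and since $\LK K$ is a localizing invariant they induce direct sum splittings $\LK K(\perf(X_i)) \simeq \LK K(\sT_i) \oplus \LK K(\sT_i^{\perp})$. Combined with the equivalence $\perf(X_1) \otimes_\Cu \perf(X_2) \simeq \perf(X_1 \times_\Cu X_2)$ of $\Cu$-linear categories (standard for derived schemes over a field), the Künneth map \eqref{geomKunneth} for $(\sT_1, \sT_2)$ appears as a direct summand of the Künneth map for $(\perf(X_1), \perf(X_2))$. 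Hence it suffices to prove the statement for perfect complexes on $X_1$ and $X_2$.

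For this I would apply Thomason's spectral sequence \eqref{bakido} on both sides. The $E_2$-pages are the $p$-adic \'etale cohomology groups $H^*_{\et}(X_i, \Z_p(l))$ and $H^*_{\et}(X_1 \times_\Cu X_2, \Z_p(l))$, and the Künneth map induces the external product
\[
\bigoplus_{l_1+l_2=l} H^i_{\et}(X_1, \Z_p(l_1)) \otimes_{\Z_p} H^j_{\et}(X_2, \Z_p(l_2)) \longrightarrow H^{i+j}_{\et}(X_1 \times_\Cu X_2, \Z_p(l)).
\]
Since $\Cu$ is algebraically closed of residue characteristic $0 \neq p$ and the truncations $\pi_0(X_i)$ are separated of finite type over $\Cu$, the classical Künneth formula for $p$-adic \'etale cohomology gives an isomorphism on $E_2$-pages. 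The spectral sequence degenerates after tensoring with $\Q_p$, so the Künneth map becomes an isomorphism rationally; one then upgrades this to an integral isomorphism using finite generation of the homotopy groups together with $p$-adic completeness of $\LK K(\Cu)$, by an argument in the spirit of Theorem~\ref{BKtheorem}.

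The main obstacle is handling the derived scheme technicalities cleanly: verifying $\perf(X_1) \otimes_\Cu \perf(X_2) \simeq \perf(X_1 \times_\Cu X_2)$ for genuinely derived schemes $X_i$ whose truncations are only assumed separated of finite type, and reducing Thomason's spectral sequence (classically formulated for ordinary schemes) to the derived setting via nilinvariance of $p$-completed $K$-theory under the closed immersion $\pi_0(X_i) \hookrightarrow X_i$. Once these inputs are granted, the rest is a routine $E_2$-level comparison of spectral sequences.
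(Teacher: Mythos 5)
Your reduction to the scheme case via admissibility is essentially the paper's own final step (it uses that $\LK K(\sT_i)$ is a retract of $\LK K(X_i)$), but your treatment of the scheme case has a genuine gap at the integral level. Thomason's spectral sequence \eqref{bakido} degenerates only after tensoring with $\Q_p$, so the comparison you describe can prove at best that the Künneth map is a rational equivalence, and the proposed upgrade to an integral equivalence "using finite generation and $p$-adic completeness, in the spirit of Theorem~\ref{BKtheorem}" does not work: a map of $p$-complete spectra inducing an isomorphism on rationalized homotopy groups need not be an equivalence (multiplication by $p$ on $\Z_p$ is the standard counterexample), and nothing here rules out torsion. Indeed the groups $H^i_\et(X,\Z_p(l))$ may have torsion, the integral étale Künneth formula carries Tor correction terms, and the homotopy of $\LK K(X_1)\otimes_{\LK K(\Cu)}\LK K(X_2)$ is computed by a Künneth/Tor spectral sequence over $\pi_*\LK K(\Cu)\simeq\Z_p[\beta^{\pm 1}]$ rather than being the naive tensor product of $E_2$-pages; so the "routine $E_2$-level comparison" is not routine, and what it yields is strictly weaker than the proposition. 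A secondary issue: you invoke \eqref{bakido} for $X_i$ whose truncations are only separated of finite type, whereas the cited result is for smooth varieties; for singular and derived schemes one needs the passage $\LK K\simeq \LK KH$ plus localization/stratification (or modern étale hyperdescent input), which is precisely the dévissage you are skipping.

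For contrast, the paper's route avoids étale cohomology altogether and never needs a degeneration argument: for smooth proper varieties it identifies $\LK K(X_i)$ with $p$-completed topological $K$-theory via Thomason and quotes Atiyah's Künneth theorem for finite CW complexes, which is an integral statement; it then propagates the result by induction over good compactifications using localization (Verdier quotient) sequences, treats singular schemes via $\LK K\simeq \LK KH$ and stratifications with Quillen localization, treats derived schemes via Dundas--Goodwillie--McCarthy (using $\LK\TC=0$ in characteristic $0$) and Zariski descent, and only then applies the retract argument. If you want to salvage your approach you would have to work with $\Z/p^n$-coefficients throughout, control the Tor terms on both sides, and establish integral degeneration or an integral comparison of the two spectral sequences; as it stands, the key integral step is missing.
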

\begin{proof} At first, we prove the claim in the case that there exist smooth proper varieties $X_1$ and $X_2$ so that $\sT_1=\perf (X_1)$ and $\sT_2= \perf (X_2)$. As a preliminary reduction, we first establish the claim over $\Cu=\bC_p$, the completion of $\overline{\Q}_p$ with respect to $|\cdot|_p$. We fix an isomorphism of fields $\sigma:\sC \simeq \bC$. We denote $X_{i}\times_{\Spec \sC} \Spec \bC$ by $X_{i,\bC}$. Due to Thomason \cite{Thomason}, we have an equivalence $L_{K(1)}K(X_i)\simeq K_{\text{top}}(X_{i,\bC}^{an})^{\wedge}_p$. Due to Atiyah \cite{Atiyah}, since a CW complex which comes from a compact complex manifold is finite, we know a natural map
\[
K_{\text{top}}(X_{1,,\bC}^{an}) \otimes_{K_{\text{top}}(\text{pt})} K_{\text{top}}(X_{2,\bC}^{an}) \to K_{\text{top}}(X_{1,\bC}^{an}\times X_{2,\bC}^{an})
\]
is an equivalence. The isomorphism of fields $\sigma:\sC \simeq \bC$ induces an equivalence of ring spectra $K_{\text{top}}(\text{pt})^{\wedge}_p \simeq \LK K(\bC)$. We see there exists a commutative diagram
\[
\xymatrix{
L_{K(1)}K(X_1) \otimes_{L_{K(1)}K(\sC)} L_{K(1)}K(X_2) \ar[d]_-{\simeq} \ar[r]&L_{K(1)}K(X_1\times_{\Spec \sC} X_2)\ar[d]^{\simeq}\\
K_{\text{top}}(X_{1,,\bC}^{an}) \otimes_{K_{\text{top}}(\text{pt})} K_{\text{top}}(X_{2,\bC}^{an})\ar[r]_-{\simeq}&K_{\text{top}}(X_{1,\bC}^{an}\times X_{2,\bC}^{an})
}
\]
and we obtain the claim. We now treat the general case. We fix an embedding of fields $\bC_p \hookrightarrow \sC$. There exists a subfield $\bC_p \subset L \subset \Cu$ s.t. $L$ is finitely generated over $\bC_p$ and $X$ and $Y$ are defined over $L$. Take a variety $T$ over $\bC_p$ such that
the function field $K(T)$ is isomorphic to $L$. Then, we can take varieties $X', Y'$ over $\bC_p$ and morphisms $X' \to T, Y' \to T$ over $\bC_p$ such that the generic fibers of $X',Y'$ tensored with $\Cu$ are $X,Y$ respectively. Then, by shrinking
$T$, we may assume $X',Y'$ are proper and smooth over $T$. Choose a $\bC_p$-value point $t$ on $T$. In that case, by using proper smooth base change theorems in \'etale cohomology and Thomason's spectral sequence \cite{Thomason}, we have canonical equivalences
\begin{eqnarray*}
    \LK K(X'_t) \simeq \LK(X) \\
    \LK K(Y'_t) \simeq \LK(Y) \\
    \LK K(X'_t \times_{\bC_p} Y'_t) \simeq \LK(X\times_{\Cu}Y)
\end{eqnarray*}
The claim follows from the $\Cu=\bC_p$-case.

Secondly, we prove the claim in the case when $\sT_1=\perf(X)$ and $\sT_2=\perf(Y)$ for a smooth variety $X$ and a smooth proper variety $Y$. Choose a good compactification $(\overline{X},\Sigma_{i=1}^n D_i)$ of $X$ where each $D_i$ is irreducible. For $n\leq i$, we write $X_i= X-\bigcup_{j=1}^i D_j$. The Verdier quotient $\perf_{D_{i}\cap X_{i-1}}(X_{i-1}) \subset \perf(X_{i-1}) \to \perf(X_{i})$ induces a fibre sequence
\begin{equation}\label{verdier1}
    \LK K(Y) \to \LK K(Y) \to \LK K(Y), 
\end{equation}
and the Verdier quotient $\perf_{(D_{i}\cap X_{i-1})\times Y}(X_{i-1}\times Y) = \perf_{D_{i}\cap X_{i-1}}(X_{i-1}) \otimes \perf(Y) \subset \perf(X_{i-1})\otimes \perf(Y) \to \perf(X_{i})\otimes \perf(Y)$ induces a fibre sequence
\begin{equation}\label{verdier2}
  \LK K((D_i\cap X_{i-1})\times Y) \to \LK K(X_{i-1}\times Y) \to \LK K(X_i\times Y).
\end{equation}
There is a morphism of fibre sequence $\LK K(D_{i}\cap X_{i-1})\otimes_{\LK K(\Cu)} \eqref{verdier1} \to \eqref{verdier2}$.
Note that $D_i\cap X_{i-1}$ have a good compactification $(D_i,\Sigma_{l=1}^{i-1} D_i\cap D_l)$. By the induction on $i$, we obtain the claim. The same argument as above proves in the case that $X$ and $Y$ are smooth varieties. 

Thirdly, we prove the claim in the case that $\sT_1=\perf(X)$ and $\sT_2=\perf(Y)$ for a separated and finite type scheme $X$ over $\Cu$ and a smooth variety $Y$. Choose a sequence of closed subschemes of $X$:
\[
\emptyset=Z_0 \subset Z_1 \subset Z_2 \subset \cdots \subset Z_n=X
\]
such that $Z_i\backslash |Z_{i-1}|$ is smooth over $\Cu$ for any $i$. For any variety $Z$ over $\Cu$, since $\Cu$ is characteristic $0$, we have an equivalence $\LK K(Z) \simeq \LK HK(Z)$ (see  \cite{Weibel}), where $HK(Z)$ is the homotopy $K$ theory of $Z$. Using Quillen's localization theorem, we have fibre sequences
\begin{equation}\label{verdier3}
    \LK K(Z_{i-1})\to  \LK K(Z_{i}) \to \LK K(Z_i\backslash |Z_{i-1}|) 
\end{equation}
and 
\begin{equation}\label{verdier4}
   \LK K(Z_{i-1}\times Y) \to \LK K(Z_{i}\times Y) \to \LK K((Z_i\backslash |Z_{i-1}|)\times Y),
\end{equation}
and there is a morphism of fibre sequences $ \LK K(Y)\otimes_{\LK K(\Cu)}\eqref{verdier3} \to \eqref{verdier4}$. By the induction on $i$, we obtain the claim. The same argument as above shows the claim in the case that $X$ and $Y$ are smooth varieties. 

For a $\Cu$-dg-algebra $B$ concentrated in non-positive degrees, using the Dundas-Goodwillie-McCarthy theorem \cite{DGM}, we have a homotopy pullback square
\[
\xymatrix{
K(B) \ar[r] \ar[d] & K(H^0(B)) \ar[d] \\
\TC(B) \ar[r]& \TC(H^0(B))
}
\]
Since $\Cu$ has characteristic $0$, we have  $\LK \TC(B)=\LK \TC (H^0(B))=0$. Thus the natural map $\LK K(B) \to \LK K(H^0(B))$ is an equivalence. For affine derived schemes $X$ and $Y$ whose truncations are separated and of finite type over $\Cu$, the claim holds for $\sT_1=\perf(X)$ and $\sT_2=\perf(Y)$. For derived schemes $X$ and $Y$ whose truncations are smooth over $\Cu$, the claim follows from the case that $X$ and $Y$ are affine derived schemes by \cite[Theorem A.4]{CMNN}. For $\Cu$-linear categories $\sT_1$, $\sT_2$ which admit geometric realization $\sT_1 \hookrightarrow \perf(X)$ and $\sT_2 \hookrightarrow \perf(Y)$, the claim follows from the fact that $\LK K(\sT_1)$ (resp. $\LK K(\sT_2)$) is a retract in $\LK K(X)$ (resp. $\LK K(Y)$) see \cite[Proposition 3.4]{admissiblesub}.
\end{proof}

We let $\Catsat(\sO_K)_{\text{geom}}$ denote the $\infty$-category of smooth proper $\sO_K$-linear categories $\sT$ such that $\sT_{\Cu}$ admits a geometric realization.

Let $\sT$ be a smooth proper $\sO_K$-linear category. Using \cite[Thm 2.16]{remarkonK1}, one obtain an equivalence $\LK K(\sT_{\sO_{\Cu}}) \simeq \LK K(\sT_\Cu)$ of ring spectra. We recall some results about topological Hochschild homology theory from \cite{Lars}, \cite{remarkonK1} and \cite{BMS2}. On homotopy groups, there is an isomorphism
\begin{equation}
  \pi_*\TP(\sO_\Cu;\Z_p) \simeq \Ainf[\sigma,\sigma^{-1}]
\end{equation}
where $\sigma$ is a generator $\sigma \in \TP_2(\sO_\Cu;\Z_p)$. Let $\beta\in K_2(\Cu)$ be the Bott element. For any $\sO_\Cu$-linear stable $\infty$-category $\sD$, we have an identification $\LK K(\sD) \simeq \LK K(\sD_\Cu)$ (see \cite[Theorem~2.16]{remarkonK1}). The cyclotomic trace map $\text{tr}:K(\sO_\Cu) \to \TP(\sO_\Cu;\Z_p)$ sends $\beta$ to a $\Z_p^*$-multiple of $([\epsilon]-1)\sigma$ (see \cite{Lars} and \cite[Theorem 1.3.6]{HN}), the cyclotomic trace map induce a morphism
\[
\LK\text{tr}:\LK K(\sD_\Cu)\simeq \LK K(\sD) \to \LK \TP(\sD)\simeq \TP(\sD;\Z_p)[\frac{1}{[\epsilon]-1}]^{\wedge}_p
\]
for any $\sO_\Cu$-linear category $\sD$.

\begin{thm}\label{KTCncomp}
Let $\sT$ be a smooth proper $\sO_K$-linear category. We assume $\sT_{\Cu}$ admits a geometric realization, then the trace map induces an equivalence
\begin{equation}\label{KTCnequi}
\LK K(\sT_\Cu)\otimes_{\LK K(\Cu)} \LK \TP(\sO_\Cu;\Z_p) \to \LK \TP(\sT_{\sO_\Cu};\Z_p).
\end{equation}
In particular, there is a $G_K$-equivariant isomorphism
\[
\pi_i \LK K(\sT_\Cu) \otimes_{\Z_p} \Ainf[\frac{1}{[\epsilon]-1}]^{\wedge}_p \simeq \pi_i \TP(\sT_{\sO_\Cu};\Z_p)[\frac{1}{[\epsilon]-1}]^{\wedge}_p
\]
for any $i$.
\end{thm}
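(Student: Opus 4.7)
The plan is to apply Proposition~\ref{compmonoid}: I would exhibit both sides of \eqref{KTCnequi} as symmetric monoidal functors
\[ F_1, F_2 : \Catsat(\sO_K)_{\mathrm{geom}} \longrightarrow \Mod_{\LK \TP(\sO_\Cu;\Z_p)}(\Sp), \]
and the $L_{K(1)}$-localized cyclotomic trace as a symmetric monoidal natural transformation $F_1 \to F_2$. Since every smooth proper $\sO_K$-linear category is dualizable in $\Cat(\sO_K)$, Proposition~\ref{compmonoid} will then reduce the claim to the unit object $\sT = \sO_K$, where both sides evaluate canonically to $\LK \TP(\sO_\Cu;\Z_p)$ and the comparison map is the identity.

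For the left-hand side, the assignment $\sT \mapsto \LK K(\sT_\Cu)$ is symmetric monoidal on $\Catsat(\sO_K)_{\mathrm{geom}}$ by Proposition~\ref{geomKunnethprop} --- this is precisely where the geometric realization hypothesis enters --- and postcomposition with base-change along the trace $\LK K(\Cu)\to \LK \TP(\sO_\Cu;\Z_p)$ preserves symmetric monoidality. For the right-hand side, Proposition~\ref{THHOCbSperf} shows that $\THH(\sT_{\sO_\Cu};\Z_p)$ is perfect over $\THH(\sO_\Cu;\Z_p)$ in $\Sp^{BS^1}$; the same argument as in the proof of Proposition~\ref{TPTCkunneth} then promotes the lax symmetric monoidal functor $(-)^{tS^1}$ to a strict one on perfect modules, so that $\sT \mapsto \TP(\sT_{\sO_\Cu};\Z_p)$ and hence $\sT \mapsto \LK \TP(\sT_{\sO_\Cu};\Z_p)$ is symmetric monoidal. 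The identification $\LK K(\sT_{\sO_\Cu}) \simeq \LK K(\sT_\Cu)$ from \cite[Thm 2.16]{remarkonK1} makes the cyclotomic trace descend to a natural transformation $F_1 \to F_2$.

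The ``in particular'' clause will follow by taking homotopy groups. Using that $\pi_*\LK K(\Cu) \cong \Z_p[\beta^{\pm}]^{\wedge}_p$ and that the trace sends $\beta$ to a unit multiple of $([\epsilon]-1)\sigma \in \pi_*\LK \TP(\sO_\Cu;\Z_p)$, the element $\beta$ acts invertibly after $\LK$-localization; combined with $2$-periodicity in $\sigma$, this rewrites the tensor product over $\LK K(\Cu)$ as an ungraded tensor product over $\Z_p$, yielding the stated formula. The $G_K$-equivariance is automatic, since the entire construction is functorial in $\sO_\Cu$.

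The main obstacle I anticipate is verifying that $F_2$ is genuinely (and not merely laxly) symmetric monoidal. This relies crucially on $\THH(\sT_{\sO_\Cu};\Z_p)$ lying in the thick subcategory generated by $\THH(\sO_\Cu;\Z_p)$, which is the content of Proposition~\ref{THHOCbSperf} and rests on the perfectness statement over $\bS[z]$ together with the base-change argument through $\bS[z^{1/p^\infty}]$. A secondary technical check is that $L_{K(1)}$-localization is compatible with the tensor products of perfect modules appearing here, so that the strict monoidality survives after inverting $[\epsilon]-1$.
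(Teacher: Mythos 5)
Your proposal is correct and follows essentially the same route as the paper: both sides of \eqref{KTCnequi} are exhibited as symmetric monoidal functors on $\Catsat(\sO_K)_{\text{geom}}$ via Proposition~\ref{geomKunnethprop} and Proposition~\ref{THHOCbSperf}, the equivalence follows from Proposition~\ref{compmonoid}, and the homotopy-group statement follows because $\LK\tr$ is given by $\Z_p[\beta^{\pm}]\to \Ainf[\frac{1}{[\epsilon]-1}]^{\wedge}_p[\sigma^{\pm}]$, $\beta\mapsto ([\epsilon]-1)\sigma$, which is flat.
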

\begin{proof}
By Proposition~\ref{THHOCbSperf} and Proposition~\ref{geomKunnethprop}, both sides of the map \eqref{KTCnequi} yield symmetric monoidal functors from $\Catsat(\sO_K)_{\text{geom}}$ to $\Mod_{\LK\TP(\sO_\Cu;\Z_p)}(\Sp)$, thus the map \eqref{KTCnequi} is an equivalence. On homotopy groups, the morphism $\LK \tr :\pi_*\LK K(\Cu)\to \pi_*\LK\TP(\sO_\Cu;\Z_p)$ is given by $\Z_p[\beta^{\pm}] \to \Ainf[\frac{1}{[\epsilon]-1}]^{\wedge}_p[\sigma^{\pm}]$ which sends $\beta$ to $([\epsilon]-1)\sigma$, and is flat. Thus we obtain the claim.
\end{proof}


\subsection{Non-commutative version of Bhatt-Morrow-Scholze's comparison theorem} In this section, as a summary of the previous several sections, we prove a non-commutative version of Theorem~\ref{BKcoh}.
\begin{thm}\label{NCBMS}
  Let $\sT$ be a smooth proper $\sO_K$-linear category. Then there is $n$ satisfying the followings:
 \begin{itemize}
   \item[(1)] For any $i\geq n$, $\pi_i \TCn(\sT/\bS[z];\Z_p)$ is a Breuil-Kisin module.
   \item[(1')] For any $i\geq n$, $\pi_i \TCn(\sT/\bS[z];\Z_p)^{\vee}$ is a Breuil-Kisin module of finite $E$-height.
   \item[(2)] (K(1)-local K theory comparison) Assume $\sT_{\Cu}$ admits a geometric realizaiton. For any $i\geq n$, after scalar extension along $\ol{\phi}:\mS\to \Ainf$, one recovers K(1)-local K theory of generic fiber
   \[
   \pi_i \TCn(\sT/\bS[z];\Z_p) \otimes_{\mS,\ol{\phi}} \Ainf[\frac{1}{\mu}]^{\wedge}_p \simeq \pi_i\LK K(\sT_{\Cu})\otimes_{\Z_p} \Ainf[\frac{1}{\mu}]^{\wedge}_p
   \]
   \item[(3)] (topological periodic homology theory comparison) For any $i\geq n$, after scalar extension along the map $\tilde{\phi}:\mS\to W$ which is the Frobenius on $W$ and sends $z$ to $0$, one recovers topological periodic homology theory of the special fiber:
  \[
   \pi_i \TCn(\sT/\bS[z];\Z_p)[\frac{1}{p}] \otimes_{\mS[\frac{1}{p}],\tilde{\phi}} K_0 \simeq \pi_i \TP(\sT_k;\Z_p)[\frac{1}{p}].
   \]
   \end{itemize}
\end{thm}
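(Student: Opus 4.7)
The plan is to observe that this theorem is essentially a packaging of results established in the previous subsections, so the proof should amount to citing them and taking a common threshold $n$. First, parts (1) and (1') are literally the content of Theorem~\ref{BKtheorem}, which produces some $n_1$ above which both $\pi_i\TCn(\sT/\bS[z];\Z_p)$ is a Breuil--Kisin module and its $\mS$-linear dual has finite $E$-height. Part (3) is nothing but Theorem~\ref{TCnTPcomp3}, which already supplies a threshold $n_3$ above which the scalar extension along $\tilde\phi$ recovers $\pi_i\TP(\sT_k;\Z_p)[\tfrac{1}{p}]$. These two parts require no further work beyond invoking the prior results.

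The nontrivial content of the proof concerns part (2), where we need the $K(1)$-local comparison. The plan is to factor this isomorphism through $\TP(\sT_{\sO_\Cu};\Z_p)$ after inverting $\mu=[\epsilon]-1$ and $p$-completing. First I would apply Theorem~\ref{TPTCcomp}, which yields, for $i\geq n_2$, a $G_{K_\infty}$-equivariant isomorphism
\[
\pi_i\TCn(\sT/\bS[z];\Z_p)\otimes_{\mS,\ol{\phi}}\Ainf \;\simeq\; \pi_i\TP(\sT_{\sO_\Cu};\Z_p).
\]
Tensoring both sides with $\Ainf[\tfrac{1}{\mu}]^{\wedge}_p$ over $\Ainf$ gives
\[
\pi_i\TCn(\sT/\bS[z];\Z_p)\otimes_{\mS,\ol{\phi}}\Ainf[\tfrac{1}{\mu}]^{\wedge}_p \;\simeq\; \pi_i\TP(\sT_{\sO_\Cu};\Z_p)\otimes_{\Ainf}\Ainf[\tfrac{1}{\mu}]^{\wedge}_p.
\]
On the right side, assuming $\sT_\Cu$ admits a geometric realization, I would then invoke Theorem~\ref{KTCncomp} (the $K(1)$-local cyclotomic trace comparison) to rewrite this as $\pi_i\LK K(\sT_\Cu)\otimes_{\Z_p}\Ainf[\tfrac{1}{\mu}]^{\wedge}_p$. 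Chaining the two isomorphisms produces the desired identification in (2).

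The main subtlety I expect to have to address is uniformity of the constant $n$: each cited theorem supplies its own threshold ($n_1$ from Theorem~\ref{BKtheorem}, $n_2$ from Theorem~\ref{TPTCcomp}, $n_3$ from Theorem~\ref{TCnTPcomp3}), and one must take $n=\max(n_1,n_2,n_3)$. A secondary point to verify is that the identifications in (2) are compatible along the two different scalar extensions of $\Ainf$ involved (inverting $\mu$ versus $p$-completing), but this is immediate because the base change $\Ainf\to \Ainf[\tfrac{1}{\mu}]^{\wedge}_p$ is a ring map and the isomorphism from Theorem~\ref{TPTCcomp} is already $\Ainf$-linear. There is no genuine obstacle here beyond bookkeeping; the real mathematical content has already been invested in the preceding theorems, particularly in the monoidal-comparison arguments (Proposition~\ref{compmonoid}) used to establish Theorems~\ref{TPTCcomp}, \ref{TCnTPcomp3}, and \ref{KTCncomp}.
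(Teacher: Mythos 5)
Your proposal is correct and follows exactly the paper's own proof: (1) and (1') from Theorem~\ref{BKtheorem}, (2) by chaining Theorem~\ref{TPTCcomp} with Theorem~\ref{KTCncomp}, and (3) from Theorem~\ref{TCnTPcomp3}, taking the maximum of the thresholds. The paper's proof is just this citation list, so nothing further is needed.
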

\begin{proof}
  Claims (1) and (1') follow from Theorem~\ref{BKtheorem}. Claim (2) follows from Theorem~\ref{TPTCcomp} and Theorem~\ref{KTCncomp}. Claim (3) follows from Theorem~\ref{TCnTPcomp3}.
\end{proof}
\begin{cor}\label{Bcrycor}
 For a smooth proper $\sO_K$-linear category $\sT$ which admits a geometric realization and an integer $i$, there is an isomorphism of $\Bcry$-modules:
\[
\pi_i\TP(\sT_k;\Z_p)\otimes_W \Bcry \simeq \pi_i L_{K(1)}K(\sT_\Cu)\otimes_{\Z_p} \Bcry.
\]
\end{cor}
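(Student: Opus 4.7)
The plan is to apply Lemma~\ref{BKlemma} to the Breuil--Kisin module $M := \pi_i \TCn(\sT/\bS[z];\Z_p)$ and identify the two resulting $\Bcry$-modules via the comparison isomorphisms of Theorem~\ref{NCBMS}.

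Fix $i$. Since $\pi_*\TP(\sT_k;\Z_p)$ is $2$-periodic by construction and $\pi_*\LK K(\sT_\Cu)$ is $2$-periodic via the Bott element of $K(1)$-local $K$-theory, after replacing $i$ by $i+2k$ for sufficiently large $k$ we may assume $i \geq n$, where $n$ is the integer furnished by Theorem~\ref{NCBMS}. Then $M$ is a Breuil--Kisin module by Theorem~\ref{NCBMS}(1), and Lemma~\ref{BKlemma} supplies a (non-canonical) $\varphi$-equivariant isomorphism
\[
M \otimes_{\mS,\ol{\phi}} \Ainf \otimes_{\Ainf} \Bcry \;\simeq\; M' \otimes_{K_0} \Bcry,
\]
where $M' := M \otimes_{\mS,\ol{\phi}} \Ainf \otimes_{\Ainf[\frac{1}{p}]} K_0$ is finite free over $K_0$.

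To identify the right-hand side, observe that the composite $\mS \xrightarrow{\ol{\phi}} \Ainf \to W \hookrightarrow K_0$ sends $z$ to the image of $[\pi^{\flat}]^{p}$ in $W$, which is $0$, and restricts to Frobenius on $W$; hence it coincides with the map $\tilde{\phi}$ of Theorem~\ref{NCBMS}(3). Consequently $M' \simeq \pi_i\TP(\sT_k;\Z_p)[\tfrac{1}{p}]$ and the right-hand side becomes $\pi_i\TP(\sT_k;\Z_p) \otimes_W \Bcry$. For the left-hand side, the map $\Ainf \to \Bcry$ factors through $\Ainf[\tfrac{1}{\mu}]^{\wedge}_p$, so Theorem~\ref{NCBMS}(2) (applicable because $\sT_\Cu$ admits a geometric realization) yields
\[
M \otimes_{\mS,\ol{\phi}} \Ainf \otimes_{\Ainf} \Bcry \;\simeq\; \pi_i\LK K(\sT_\Cu) \otimes_{\Z_p} \Bcry.
\]
Combining the two identifications produces the claimed isomorphism. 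The only real subtlety is the bookkeeping of base-change maps, in particular verifying that the composition $\mS \to \Ainf \to K_0$ equals $\tilde{\phi}$ on the nose and that the two factorizations of $\Ainf \to \Bcry$ are compatible with the identifications in Theorem~\ref{NCBMS}; both are immediate from the constructions.
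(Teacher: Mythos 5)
Your overall strategy is the same as the paper's (its proof of this corollary is literally the citation of Theorem~\ref{NCBMS} and Lemma~\ref{BKlemma}), and your identification of the topological periodic side, via the observation that $\mS\xrightarrow{\ol{\phi}}\Ainf\to W(\ol{k})$ kills $z$ and is Frobenius on $W$, as well as the reduction to $i\geq n$ by $2$-periodicity, are fine. The genuine gap is in the identification of the $K(1)$-local side: you claim that $\Ainf\to\Bcry$ factors through $\Ainf[\frac{1}{\mu}]^{\wedge}_p$ and call this immediate, but no such factorization exists. Indeed, $[p^{\flat}]$ is a unit in $\Ainf[\frac{1}{\mu}]^{\wedge}_p$ (its image in $(\Ainf[\frac{1}{\mu}])/p\simeq\Cu^{\flat}$ is nonzero, hence invertible, and the ring is $p$-complete; in fact $\Ainf[\frac{1}{\mu}]^{\wedge}_p\simeq W(\Cu^{\flat})$), whereas the image of $[p^{\flat}]$ in $\Bcry$ is a non-unit --- this is the standard example showing $\Bcry$ is not a field. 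So an $\Ainf$-algebra map $\Ainf[\frac{1}{\mu}]^{\wedge}_p\to\Bcry$ cannot exist, and one cannot base-change the isomorphism of Theorem~\ref{NCBMS}(2) along such a map. The absence of a direct ring map from the ``\'etale'' period ring $W(\Cu^{\flat})$ to the ``crystalline'' one is exactly why Lemma~\ref{BKlemma} (Fargues--Fontaine) is needed in the first place, and is the same phenomenon the paper points to around \eqref{diffCNC2}.

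The repair is short and does not change the strategy, because the corollary only asserts an isomorphism of underlying $\Bcry$-modules. Your first half already shows that $\pi_i\TP(\sT_k;\Z_p)\otimes_W\Bcry\simeq M'[\frac{1}{p}]\otimes_{K_0}\Bcry\simeq M_{\Ainf}\otimes_{\Ainf}\Bcry$ is a finite free $\Bcry$-module whose rank equals the rank $r$ of the finite free $\mS[\frac{1}{p}]$-module $\pi_i\TCn(\sT/\bS[z];\Z_p)[\frac{1}{p}]$. On the other side, $\pi_i\LK K(\sT_\Cu)$ is a finitely generated $\Z_p$-module, and its torsion dies after $\otimes_{\Z_p}\Bcry$ since $p$ is invertible there, so $\pi_i\LK K(\sT_\Cu)\otimes_{\Z_p}\Bcry$ is finite free of rank $\operatorname{rank}_{\Z_p}\pi_i\LK K(\sT_\Cu)$. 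Finally, Theorem~\ref{NCBMS}(2), read after the base change $\Ainf[\frac{1}{\mu}]^{\wedge}_p\to W(\Cu^{\flat})$ (which does exist and is used in the paper's proof of Theorem~\ref{mainwithproof}) and after inverting $p$, compares two free modules over the field $W(\Cu^{\flat})[\frac{1}{p}]$ and shows these two ranks agree. Hence both sides of the corollary are free $\Bcry$-modules of the same rank and are therefore isomorphic --- which is all that is claimed, and is consistent with the fact that the isomorphism of Lemma~\ref{BKlemma} is itself noncanonical.
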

\begin{proof}
  The claim follows from Theorem~\ref{NCBMS} amd Lemma~\ref{BKlemma}.
\end{proof}
\section{Breuil-Kisin module and semi-stable representation}\label{sec3}

\subsection{Breuil-Kisin $G_K$-module} Let us recall the work of H. Gao on Breuil-Kisin $G_K$-module \cite{Gao}. Look at the following diagram: 
\[
\xymatrix{
\Ainf \ar[r]^{\varphi_{\Ainf}} & \Ainf \\
  \mS \ar[u]^{\phi} \ar[r]_{\varphi} & \mS \ar[u]^{\phi} 
}
\]
 We note that $\phi$ sends $E$ to $\xi$. Let $(\fM,\varphi_\fM)$ be a Breuil-Kisin module. We write $\widehat{\fM}$ denote to $\fM\otimes_{\mS,\phi}\Ainf$. After tensoring $-\otimes_{\mS,\phi} \Ainf$, the $\mS[\frac{1}{E}]$-linear isomorphism $\fM\otimes_{\mS,\varphi}\mS[\frac{1}{E}]\overset{\varphi_{\fM}}{\simeq}\fM[\frac{1}{E}]$ becomes the isomorphism 
\begin{equation}\label{power}
\widehat{\varphi}_{\fM}:\widehat{\fM}\otimes_{\Ainf,\varphi_{\Ainf}} \Ainf[\frac{1}{\xi}] \simeq \widehat{\fM}[\frac{1}{\xi}].
\end{equation}

\begin{defn}[{\cite[Defn 7.1.1]{Gao}}]\label{BKGKdefn}
Let $(\fM,\varphi_\fM)$ be a finite free Breuil-Kisin module of finite $E$-height,
equipped with a continuous $\Ainf$-semi-linear $G_K$-action on 
$\widehat{\fM}=\fM\otimes_{\mS,\phi}\Ainf$. 
We call $(\fM,\varphi_\fM)$ a Breuil--Kisin $G_K$-module if this $G_K$-action satisfies:
\begin{itemize}
  \item[(1)] $G_K$ commutes with $\widehat{\varphi}_\fM$.
  \item[(2)] $\fM \subset \widehat{\fM}^{G_{K_\infty}}$ via the embedding $\fM \subset \widehat{\fM}$.
  \item[(3)] $\fM/z\fM \subset \bigl(\widehat{\fM}\otimes_{\Ainf}W(\ol{k})\bigr)^{G_K}$ via the embedding $\fM/z\fM\subset \widehat{\fM}\otimes_{\Ainf}W(\ol{k})$.
\end{itemize}
\end{defn}

The $G_K$-equivariant surjective map of rings $\sO_\Cu \to \ol{k}$ induces a morphism of ring spectra
\begin{equation}\label{TPTPOColkmap}
\TP(\sT_{\sO_\Cu};\Z_p) \to \TP(\sT_{\ol{k}};\Z_p).
\end{equation}
Combine an equivalence $\THH(\sT_{\ol{k}};\Z_p)\simeq \THH(\ol{k};\Z_p) \otimes_{\THH(\sO_\Cu;\Z_p)} \THH(\sT_{\sO_\Cu};\Z_p)$ with Proposition~\ref{THHOCbSperf}, we see the following Proposition.
\begin{prop}\label{THHolkperf}
For a smooth proper $\sO_K$-linear category $\sT$, $\THH(\sT_{\ol{k}};\Z_p)$ is perfect in $\Mod_{\THH({\ol{k}};\Z_p)}(\Sp^{BS^1})$.
\end{prop}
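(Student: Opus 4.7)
My plan is to follow exactly the hint stated just before the proposition: combine the base change identity $\THH(\sT_{\ol{k}};\Z_p) \simeq \THH(\ol{k};\Z_p) \otimes_{\THH(\sO_\Cu;\Z_p)} \THH(\sT_{\sO_\Cu};\Z_p)$ with Proposition~\ref{THHOCbSperf}. The key conceptual input is that perfectness, being defined as membership in the thick subcategory generated by the unit, is transported along any symmetric monoidal exact functor between the relevant module $\infty$-categories.

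Concretely, the first step is to recall from Proposition~\ref{THHOCbSperf} that $\THH(\sT_{\sO_\Cu};\Z_p)$ lies in the thick subcategory of $\Mod_{\THH(\sO_\Cu;\Z_p)}(\Sp^{BS^1})$ generated by the unit. Next, I would consider the base change functor
\[
- \otimes_{\THH(\sO_\Cu;\Z_p)} \THH(\ol{k};\Z_p) : \Mod_{\THH(\sO_\Cu;\Z_p)}(\Sp^{BS^1}) \to \Mod_{\THH(\ol{k};\Z_p)}(\Sp^{BS^1}),
\]
which is symmetric monoidal and exact, hence sends the unit to the unit and preserves the thick subcategory generated by it. Applying this functor to $\THH(\sT_{\sO_\Cu};\Z_p)$ and using the stated base change equivalence delivers that $\THH(\sT_{\ol{k}};\Z_p)$ is perfect in $\Mod_{\THH(\ol{k};\Z_p)}(\Sp^{BS^1})$.

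The main obstacle, if any, is justifying the base change identity itself. This should follow from the fact that $\sT$ smooth proper over $\sO_K$ makes $\sT_{\sO_\Cu}$ dualizable in $\Cat(\sO_\Cu)$, so that the symmetric monoidal $\infty$-functor $\THH(-;\Z_p)$ (restricted to $\sO_\Cu$-linear categories and landing in $\THH(\sO_\Cu;\Z_p)$-modules) sends the pushout $\sT_{\ol{k}} = \sT_{\sO_\Cu} \otimes_{\sO_\Cu} \ol{k}$ to the corresponding relative tensor product. This is the precise analogue of the argument in the proof of Proposition~\ref{TPTCkunneth}, with $\sO_K \to \sO_\Cu$ replaced by $\sO_\Cu \to \ol{k}$; alternatively, one can simply cite the smooth/proper K\"unneth formula for $\THH$ after $p$-completion used repeatedly in \cite{KunnethTP}. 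Either way, no new work is required beyond invoking these general base change properties of $\THH$.
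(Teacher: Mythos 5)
Your argument is correct and is essentially the paper's own: the paper proves Proposition~\ref{THHolkperf} exactly by combining the base change equivalence $\THH(\sT_{\ol{k}};\Z_p)\simeq \THH(\ol{k};\Z_p)\otimes_{\THH(\sO_\Cu;\Z_p)}\THH(\sT_{\sO_\Cu};\Z_p)$ with Proposition~\ref{THHOCbSperf}, the point being that the exact symmetric monoidal base change functor preserves the thick subcategory generated by the unit. Your extra remarks on justifying the base change identity only add detail to what the paper takes for granted, so there is no substantive difference.
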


Let us recall the calculation of $\TP(\sO_\Cu;\Z_p)$ \cite{BMS2}. On homotopy groups,
\begin{equation}
\varphi^{hS^1}_{\sO_\Cu}:  \pi_* \TCn(\sO_\Cu;\Z_p) \to \pi_*\TP(\sO_\Cu;\Z_p)
\end{equation}
is given by 
\[
\Ainf[u,v]/(uv-\xi) \to \Ainf[\sigma^\pm]
\]
which is $\varphi_{\Ainf}$-linear and sends $u$ to $\sigma$ and $v$ to $\varphi_{\Ainf}(\xi)\sigma^{-1}$, where $u$ and $\sigma$ have degree $2$ and $v$ has degree $-2$. Similarly, on homotopy groups,
\begin{equation}
\varphi^{hS^1}_{\ol{k}}: \pi_* \TCn(\ol{k};\Z_p) \to \pi_*\TP(\ol{k};\Z_p)
\end{equation}
is given by 
\[
W(\ol{k})[u,v]/(uv-p) \to W(\ol{k})[\sigma^\pm]
\]
which is $\varphi_{W(\ol{k})}$-linear and sends $u$ to $\sigma$ and $v$ to $p\sigma^{-1}$.

\begin{thm}\label{TPTPOColkcomp}
Let $\sT$ be a smooth proper $\sO_K$-linear category. The morphism~\eqref{TPTPOColkmap} induces an isomorphism
\[
\pi_i\TP(\sT_{\sO_\Cu};\Z_p)[\frac{1}{p}] \otimes_{\Ainf[\frac{1}{p}]} W(\ol{k})[\frac{1}{p}] \simeq \pi_i\TP(\sT_{\ol{k}};\Z_p)[\frac{1}{p}]
\]
for any $i$. In particular, $\pi_i\TP(\sT_{\sO_\Cu};\Z_p)[\frac{1}{p}]\otimes_{\Ainf[\frac{1}{p}]} W(\ol{k})[\frac{1}{p}]$ is fixed by $G_{K^{ur}}$.
\end{thm}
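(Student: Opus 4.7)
The plan is to follow the same K\"unneth / symmetric-monoidal-comparison strategy used throughout this section, replacing the base change $\sO_K \to \sO_\Cu$ of earlier theorems with the reduction $\sO_\Cu \to \ol{k}$. First I would upgrade the morphism in the statement to an equivalence of spectra
\[
\TP(\sT_{\sO_\Cu};\Z_p)\otimes_{\TP(\sO_\Cu;\Z_p)} \TP(\ol{k};\Z_p) \simeq \TP(\sT_{\ol{k}};\Z_p).
\]
Both sides define symmetric monoidal functors $\Catsat(\sO_K) \to \Mod_{\TP(\ol{k};\Z_p)}(\Sp)$: the left-hand one because $\THH(\sT_{\sO_\Cu};\Z_p)$ is perfect over $\THH(\sO_\Cu;\Z_p)$ by Proposition~\ref{THHOCbSperf} and $(-)^{tS^1}$ is lax symmetric monoidal and exact, the right-hand one by the same argument together with Proposition~\ref{THHolkperf}. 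The natural transformation between them is manifestly an equivalence on the unit $\perf(\sO_K)$, so Proposition~\ref{compmonoid} upgrades it to an equivalence on every $\sT \in \Catsat(\sO_K)$.

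Second, to pass from spectra to homotopy groups after inverting $p$ I would invoke flatness. Theorem~\ref{TPTCcomp} gives $\pi_i \TP(\sT_{\sO_\Cu};\Z_p) \simeq \pi_i \TCn(\sT/\bS[z];\Z_p)\otimes_{\mS,\ol{\phi}} \Ainf$ for $i \ge n$; combined with Theorem~\ref{BKtheorem} and \cite[Proposition~4.3]{BMS1} (a Breuil-Kisin module is finite free after inverting $p$), this shows $\pi_i \TP(\sT_{\sO_\Cu};\Z_p)[\frac{1}{p}]$ is finite free over $\Ainf[\frac{1}{p}]$ for $i \ge n$. The $2$-periodicity of $\TP$ extends the freeness to all $i$, so $\pi_* \TP(\sT_{\sO_\Cu};\Z_p)[\frac{1}{p}]$ is finite free as a graded module over $\pi_* \TP(\sO_\Cu;\Z_p)[\frac{1}{p}] = \Ainf[\frac{1}{p}][\sigma^{\pm}]$. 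The K\"unneth spectral sequence attached to the first-step equivalence therefore degenerates after inverting $p$, giving
\[
\pi_i \TP(\sT_{\sO_\Cu};\Z_p)[\tfrac{1}{p}] \otimes_{\Ainf[\frac{1}{p}]} W(\ol{k})[\tfrac{1}{p}] \simeq \pi_i \TP(\sT_{\ol{k}};\Z_p)[\tfrac{1}{p}]
\]
for every $i$, as claimed.

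For the $G_{K^{ur}}$-invariance, I would simply observe that $G_{K^{ur}}$ fixes the residue field $\ol{k}$ pointwise, hence acts trivially on $\sT_{\ol{k}}$ and therefore trivially on $\TP(\sT_{\ol{k}};\Z_p)$; since the map $\sO_\Cu \to \ol{k}$ is $G_K$-equivariant the isomorphism above is $G_{K^{ur}}$-equivariant, which forces the induced action on the left-hand side to be trivial as well. The only step demanding genuine care is the finite-freeness of $\pi_i \TP(\sT_{\sO_\Cu};\Z_p)[\frac{1}{p}]$ over $\Ainf[\frac{1}{p}]$, but this is a direct consequence of the Breuil-Kisin structure provided by Theorem~\ref{NCBMS} together with \cite[Proposition~4.3]{BMS1}; the remainder is a transcription of the template already used in Proposition~\ref{TPTCkunneth} and Theorems~\ref{TPTCcomp}--\ref{KTCncomp}.
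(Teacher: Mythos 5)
Your proposal is correct and follows essentially the same route as the paper: the symmetric monoidal natural transformation between the two functors on $\Catsat(\sO_K)$ is upgraded to an equivalence via Proposition~\ref{compmonoid}, and the passage to homotopy groups uses exactly the paper's argument that $\pi_i\TP(\sT_{\sO_\Cu};\Z_p)[\frac{1}{p}]$ is finite free over $\Ainf[\frac{1}{p}]$ (via Theorems~\ref{BKtheorem} and \ref{TPTCcomp}, \cite[Proposition 4.3]{BMS1}, and $2$-periodicity), so that the K\"unneth spectral sequence collapses. The only cosmetic difference is that you invert $p$ after the categorical comparison rather than before, and you spell out the $G_{K^{ur}}$-invariance, which the paper leaves implicit.
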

\begin{proof}
The morphism~\eqref{TPTPOColkmap} yields a morphism
\begin{equation}\label{TPTPOColkmap2}
\TP(\sT_{\sO_\Cu};\Z_p)[\frac{1}{p}]\otimes_{\TP(\sO_\Cu;\Z_p)[\frac{1}{p}]} \TP(\ol{k};\Z_p)[\frac{1}{p}] \to \TP(\sT_{\ol{k}};\Z_p)[\frac{1}{p}].
\end{equation}
 By Proposition~\ref{THHOCbSperf} and Proposition~\ref{THHolkperf}, both sides of the map \eqref{TPTPOColkmap} yield symmetric monoidal functors from $\Catsat(\sO_K)$ to $\Mod_{\TP(\ol{k};\Z_p)}(\Sp)$, and the map \eqref{TPTPOColkmap2} yields a symmetric monoidal natural transformation between them. Thus the morphism \eqref{TPTPOColkmap2} is an equivalence. By Theorem~\ref{BKtheorem} and \cite[Proposition 4.3]{BMS1}, there is $n$ such that for any $i\geq n$, the homotopy group $\pi_i\TCn(\sT/\bS[z];\Z_p)$ is a Breuil-Kisin module, and $\pi_i\TCn(\sT/\bS[z];\Z_p)[\frac{1}{p}]$ is a finite free $\mS[\frac{1}{p}]$-module. By Theorem~\ref{TPTCcomp} and the fact that $\TP(\sT_{\sO_\Cu};\Z_p)$ is $2$-periodic, $\pi_i\TP(\sT_{\sO_\Cu};\Z_p)[\frac{1}{p}]$ is a finite free $\Ainf[\frac{1}{p}]$-module for any $i$. Thus $\pi_*\TP(\sT_{\sO_\Cu};\Z_p)[\frac{1}{p}]$ is a flat graded $\pi_*\TP(\sO_\Cu;\Z_p)[\frac{1}{p}]$-module and we obtain the claim.
\end{proof}

\begin{lemma}\label{periodlemma2}
Let $\sT$ be a smooth proper $\sO_K$-linear category. Then the followings hold:
\begin{itemize}
  \item[(1)] $\pi_*\TCn(\sT_{\sO_\Cu};\Z_p)$ is a finitely generated $\pi_*\TCn(\sO_\Cu/\bS[z];\Z_p)$-module, thus $\pi_i\TCn(\sT_{\sO_\Cu};\Z_p)$ is a finitely generated $\Ainf$-module for any $i$.
  \item[(2)] There is a natural number $n$ satisfying that for any $j \geq n$, $\pi_j\TCn(\sT_{\sO_\Cu};\Z_p)\overset{u\cdot }{\to} \pi_{j+2}\TCn(\sT_{\sO_\Cu};\Z_p)$ is an isomorphism. 
\end{itemize}
\end{lemma}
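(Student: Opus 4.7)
The proof parallels that of Lemma~\ref{periodlemma}, with the base $\sO_K$ replaced by $\sO_\Cu$. For claim~(1), the key perfectness input is already contained in the proof of Proposition~\ref{THHOCperfect}: base change along the functor $-\otimes_{\THH(\sO_K/\bS[z];\Z_p)}\THH(\sO_\Cu/\bS[z];\Z_p)$ shows that $\THH(\sT_{\sO_\Cu}/\bS[z];\Z_p)$ is perfect in $\Mod_{\THH(\sO_\Cu/\bS[z];\Z_p)}(\Sp^{BS^1})$. Applying the lax symmetric monoidal exact functor $(-)^{hS^1}$ (see \cite[Corollary I.4.3]{NS18}) yields that $\TCn(\sT_{\sO_\Cu}/\bS[z];\Z_p)$ is perfect as a $\TCn(\sO_\Cu/\bS[z];\Z_p)$-module. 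Lemma~\ref{BMSTHHlemma}, together with faithfully flat base change from $\bS[z]^{\wedge}_p$ to $\bS[z^{1/p^\infty}]^{\wedge}_p$, identifies the corresponding module structure on $\pi_*\TCn(\sT_{\sO_\Cu};\Z_p)$.

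Since a perfect module spectrum lies in the thick subcategory generated by the unit, $\pi_*\TCn(\sT_{\sO_\Cu};\Z_p)$ is built from $\pi_*\TCn(\sO_\Cu/\bS[z];\Z_p)$ by finitely many shifts, cofibre sequences, and retracts, and is therefore finitely generated as a graded module. To deduce the $\Ainf$-finite-generation of each $\pi_i\TCn(\sT_{\sO_\Cu};\Z_p)$, I would invoke the description $\pi_*\TCn(\sO_\Cu/\bS[z];\Z_p) \simeq \Ainf[u,v]/(uv-\eta)$ (with $u,v$ in degrees $\pm 2$ and $\eta$ a generator of $\ker(\Ainf\to\sO_\Cu)$), obtained by adapting \cite[Proposition 11.10]{BMS2} to $\sO_\Cu$ in place of $\sO_K$. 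Each graded piece of this ring is free of rank at most one over $\Ainf$, so graded finite generation implies the pointwise statement.

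For claim~(2), I would repeat the argument of Lemma~\ref{periodlemma}(2): on $\pi_*\TCn(\sO_\Cu/\bS[z];\Z_p)$, multiplication by $u$ is already an isomorphism $\pi_j\to\pi_{j+2}$ for $j\geq 0$. For the finitely generated graded module $M=\pi_*\TCn(\sT_{\sO_\Cu};\Z_p)$, both $\Ker(u)$ and $\Coker(u)$ are $u$-torsion, hence graded modules over the quotient $\pi_*\TCn(\sO_\Cu/\bS[z];\Z_p)/(u)\cong (\Ainf/\eta)[v]$, which is supported in non-positive degrees. It follows that $\Ker(u)$ and $\Coker(u)$ vanish in sufficiently large degrees, giving the claim for $j\geq n$ with $n$ chosen large enough.

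The hard part will be making the graded finite generation precise in the non-noetherian setting, since $\Ainf$ is not noetherian and submodules of finitely generated modules need not be finitely generated in general; the thick-subcategory argument sketched above still applies because each cofibre sequence produces explicit new generators via its connecting boundary, but careful book-keeping is required. A secondary technical point is confirming the exact form of $\pi_*\TCn(\sO_\Cu/\bS[z];\Z_p)$, which should follow from flat descent along $\bS[z]\to\bS[z^{1/p^\infty}]$ together with the known computation of $\THH$ of the perfectoid ring $\sO_\Cu$ from \cite[Section 6]{BMS2}.
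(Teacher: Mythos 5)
Your argument follows the same route as the paper: claim (1) is deduced from the perfectness of $\THH(\sT_{\sO_\Cu};\Z_p)$ as a module over $\THH(\sO_\Cu;\Z_p)$ (the paper's Proposition~\ref{THHOCperfect}/Proposition~\ref{THHOCbSperf}, established by exactly the base-change you describe, together with Lemma~\ref{BMSTHHlemma}), and claim (2) from the computation that $u$ is an isomorphism in non-negative degrees on the base ring. The non-noetherian subtlety you flag for $\Ker(u)$ is genuine but is closed by the cell-induction/five-lemma argument you sketch for perfect modules, and the paper's own proof is no more explicit on this point.
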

\begin{proof}
Claim (1) directly follows from Proposition ~\ref{THHOCbSperf}. For any $j \geq 0$, by the calculation of $\pi_* \TCn({\sO_\Cu};\Z_p)$ (see \cite[Proposition 11.10]{BMS2}), we know $\pi_j\TCn({\sO_\Cu};\Z_p)\overset{u\cdot}{\to} \pi_{j+2}\TCn({\sO_\Cu};\Z_p)$ is an isomorphism. By claim (1), we see claim (2).
\end{proof}

\begin{thm}\label{TPTCnOCOCcomp}
Let $\sT$ be a smooth proper $\sO_K$-linear category. There is a natural number $n$ such that the cyclotomic Frobenius morphism $\varphi^{hS^1}_{\sT_{\sO_\Cu}}:\TCn(\sT_{\sO_\Cu};\Z_p) \to \TP(\sT_{\sO_\Cu};\Z_p)$ induces a $G_K$-equivariant isomorphism
\begin{equation}\label{TCnTPOCAinfequi}
\pi_i\TCn(\sT_{\sO_\Cu};\Z_p)\otimes_{\Ainf,\varphi_{\Ainf}} \Ainf \simeq \pi_i\TP(\sT_{\sO_\Cu};\Z_p)
\end{equation}
for any $i\geq n$. 
\end{thm}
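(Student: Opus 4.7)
The plan is to follow the template of Theorem~\ref{BKtheorem}, now working over the base $\sO_\Cu$ rather than $\sO_K$, using Proposition~\ref{THHOCbSperf} to supply perfectness of $\THH(\sT_{\sO_\Cu};\Z_p)$ and Lemma~\ref{periodlemma2} to eliminate the auxiliary localization at $u$.

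First, I would note that on the base case $\sT=\sO_\Cu$, the calculation of $\varphi^{hS^1}_{\sO_\Cu}$ recalled in the display just above Theorem~\ref{TPTPOColkcomp} sends $u$ to $\sigma$. Since $\sigma$ is invertible in $\pi_*\TP(\sO_\Cu;\Z_p)$, the cyclotomic Frobenius for $\sT_{\sO_\Cu}$ factors to give a natural transformation
\[
\underline{\varphi}^{hS^1}_{\sT_{\sO_\Cu}} : \TCn(\sT_{\sO_\Cu};\Z_p)[\tfrac{1}{u}] \to \TP(\sT_{\sO_\Cu};\Z_p),
\]
and base change along $\underline{\varphi}^{hS^1}_{\sO_\Cu}$ yields a $\TP(\sO_\Cu;\Z_p)$-linear map
\[
\TCn(\sT_{\sO_\Cu};\Z_p)[\tfrac{1}{u}] \otimes_{\TCn(\sO_\Cu;\Z_p)[\tfrac{1}{u}],\,\underline{\varphi}^{hS^1}_{\sO_\Cu}} \TP(\sO_\Cu;\Z_p) \longrightarrow \TP(\sT_{\sO_\Cu};\Z_p).
\]

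The next step is to show this map is an equivalence. By Proposition~\ref{THHOCbSperf}, combined with the lax symmetric monoidal exactness of $(-)^{hS^1}$ and $(-)^{tS^1}$ that powered Proposition~\ref{TPTCkunneth}, both sides define symmetric monoidal functors $\Catsat(\sO_\Cu)\to \Mod_{\TP(\sO_\Cu;\Z_p)}(\Sp)$, and our map is a symmetric monoidal natural transformation between them which is the identity on the unit $\sO_\Cu$. Proposition~\ref{compmonoid} then forces it to be an equivalence. On $\pi_0$, $\underline{\varphi}^{hS^1}_{\sO_\Cu}$ is the $\varphi_{\Ainf}$-semilinear ring map $\Ainf[u^{\pm}] \to \Ainf[\sigma^{\pm}]$ sending $u$ to $\sigma$, which is flat; passing to homotopy groups therefore yields
\[
\pi_i\TCn(\sT_{\sO_\Cu};\Z_p)[\tfrac{1}{u}] \otimes_{\Ainf,\varphi_{\Ainf}} \Ainf \simeq \pi_i\TP(\sT_{\sO_\Cu};\Z_p)
\]
for every $i$, and Lemma~\ref{periodlemma2}(2) furnishes an $n$ such that the localization at $u$ is redundant for all $i\geq n$, giving the asserted isomorphism on homotopy groups.

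For $G_K$-equivariance, I would observe that $G_K$ acts on $\sO_\Cu$ by $\sO_K$-algebra automorphisms, inducing natural $G_K$-actions on every spectrum in sight via functoriality of $\THH(-;\Z_p)$, $(-)^{hS^1}$ and $(-)^{tS^1}$, and that every constructed map (the cyclotomic Frobenius, the localization at $u$, the base change, and the equivalence above) is natural in the input $\sO_\Cu$-algebra, hence automatically $G_K$-equivariant; on the twisted tensor product the right factor $\Ainf$ carries its tautological $G_K$-action, and $\varphi_{\Ainf}$ commutes with $G_K$. I expect the main hurdle to be precisely the symmetric-monoidal reduction step: one needs both sides to remain symmetric monoidal in $\sT_{\sO_\Cu}$, i.e.\ a K\"unneth-type statement for $\TCn$ and $\TP$ over $\sO_\Cu$, and this is exactly what Proposition~\ref{THHOCbSperf} delivers by propagating perfectness through the $S^1$-fixed-point functors.
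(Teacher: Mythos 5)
Your proposal follows essentially the same route as the paper: factor the cyclotomic Frobenius through $\TCn(\sT_{\sO_\Cu};\Z_p)[\frac{1}{u}]$, prove the base-change map to $\TP(\sT_{\sO_\Cu};\Z_p)$ is an equivalence by the symmetric-monoidal rigidity argument (Proposition~\ref{compmonoid}), use that $\underline{\varphi}^{hS^1}_{\sO_\Cu}$ is the $\varphi_{\Ainf}$-semilinear isomorphism $\Ainf[u^{\pm}]\to\Ainf[\sigma^{\pm}]$ on homotopy, and remove the localization at $u$ via Lemma~\ref{periodlemma2}. One small correction: the symmetric monoidal functors should be taken on $\Catsat(\sO_K)$ (precomposed with $\sT\mapsto\sT_{\sO_\Cu}$), as in the paper, rather than on $\Catsat(\sO_\Cu)$ as you wrote; symmetric monoidality (K\"unneth) on $\Catsat(\sO_\Cu)$ would require perfectness of $\THH$ for \emph{arbitrary} smooth proper $\sO_\Cu$-linear categories, which Proposition~\ref{THHOCbSperf} does not give and which cannot be deduced from the dualizable-implies-perfect criterion of Proposition~\ref{propdualisperf}, since $\pi_*\THH(\sO_\Cu;\Z_p)\simeq\sO_\Cu[u]$ is not noetherian. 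With that domain adjusted, your argument, including the naturality argument for $G_K$-equivariance, matches the paper's proof.
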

\begin{proof} Cyclotomic Frobenius map $\varphi^{hS^1}_{\sT_{\sO_\Cu}}:\TCn(\sT_{\sO_\Cu};\Z_p) \to \TP(\sT_{\sO_\Cu};\Z_p)$ yields a morphism 
\[
\underline{\varphi}^{hS^1}_{\sT_{\sO_\Cu}}:\TCn(\sT_{\sO_\Cu};\Z_p)[\frac{1}{u}] \to \TP(\sT_{\sO_\Cu};\Z_p).
\]
The morphism $\underline{\varphi}^{hS^1}_{\sT_{\sO_\Cu}}$ induces a morphism of $\TP(\sO_\Cu;\Z_p)$-module: 
  \begin{equation}\label{TCnTPOCequi}
  \TCn(\sT_{\sO_\Cu};\Z_p)[\frac{1}{u}]\otimes_{\TCn(\sO_\Cu;\Z_p)[\frac{1}{u}],\underline{\varphi}^{hS^1}_{\sO_\Cu}} \TP(\sO_\Cu;\Z_p)\to \TP(\sT_{\sO_\Cu};\Z_p).
  \end{equation}
By Proposition~\ref{THHOCbSperf}, both sides of the map \eqref{TCnTPOCequi} yield symmetric monoidal functors from $\Catsat(\sO_K)$ to $\Mod_{\TP(\sO_\Cu;\Z_p)}(\Sp)$, thus the map \eqref{TCnTPOCequi} is an equivalence. On homotopy groups, $\underline{\varphi}^{hS^1}_{\sO_\Cu}: \pi_*\TCn(\sO_\Cu;\Z_p)[\frac{1}{u}] \to \pi_* \TP(\sO_\Cu;\Z_p)$ is given by $\varphi_{\Ainf}$-linear map
  \[
  \Ainf[u^\pm] \to \Ainf[\sigma^\pm],
  \]
  thus this is an isomorphism. Thus we obtain the isomorphism
  \[
  \pi_i \TCn(\sT_{\sO_\Cu};\Z_p)[\frac{1}{u}] \otimes_{\Ainf,\varphi_{\Ainf}} \pi_i \TP(\sT_{\sO_\Cu};\Z_p)
  \]
  for any $i$. By Lemma~\ref{periodlemma2}, there is a natural number $n$ such that $\pi_i \TCn(\sT_{\sO_\Cu};\Z_p)\simeq \pi_i \TCn(\sT_{\sO_\Cu};\Z_p)[\frac{1}{u}]$ for any $i \geq n$.
\end{proof}

For a smooth proper $\sO_K$-linear category $\sT$, we will prove that the free Breuil-Kisin module $\pi_i \TCn(\sT/\bS[z];\Z_p)^{\vee}$ is a Breuil-Kisin $G_K$-module.

\begin{thm}\label{BKGKmodulethm}
  Let $\sT$ be a smooth proper $\sO_K$-linear category. There is a $n$ such that for any $i\geq n$, the followings holds:
  \begin{itemize}
    \item[(1)] There is a continuous $\Ainf$-semi-linear $G_K$-action on 
    \[
\widehat{\pi_i\TCn(\sT/\bS[z];\Z_p)^{\vee}}=\pi_i\TCn(\sT/\bS[z];\Z_p)^{\vee}\otimes_{\mS,\phi}\Ainf.
    \]
    \item[(2)] $G_K$ commutes with $\widehat{\varphi}_{\pi_i \TCn(\sT/\bS[z];\Z_p)^{\vee}}$.
    \item[(3)] $\pi_i \TCn(\sT/\bS[z];\Z_p)^{\vee} \subset \bigl(\widehat{\pi_i \TCn(\sT/\bS[z];\Z_p)^{\vee}}\bigr)^{G_{K_\infty}}$ via the embedding $\pi_i \TCn(\sT/\bS[z];\Z_p)^{\vee} \subset \widehat{\pi_i \TCn(\sT/\bS[z];\Z_p)^{\vee}}$. 
    \item[(4)] $\pi_i\TCn(\sT_{\sO_\Cu};\Z_p)^{\vee}[\frac{1}{p}]\otimes_{\Ainf[\frac{1}{p}]} W(\ol{k})[\frac{1}{p}]$ is fixed by $G_{K^{ur}}$.
  \end{itemize}
  In particular, $\pi_i \TCn(\sT/\bS[z];\Z_p)^{\vee}$ is a Breuil-Kisin $G_K$-module.
\end{thm}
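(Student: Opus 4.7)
Set $\fM = \pi_i\TCn(\sT/\bS[z];\Z_p)$ and $N = \fM^{\vee}$, and choose $n$ large enough that the conclusions of Theorems~\ref{BKtheorem}, \ref{TPTCcomp}, \ref{TPTCnOCOCcomp}, \ref{TPTPOColkcomp} and Lemma~\ref{periodlemma2} all hold for $i\geq n$. The plan begins by establishing a canonical $G_{K_\infty}$-equivariant identification
\[
\widehat{\fM} = \fM\otimes_{\mS,\phi}\Ainf \simeq \pi_i\TCn(\sT_{\sO_\Cu};\Z_p),
\]
from which all the required structure on $\widehat{N}$ can be transported. Using the factorization $\ol{\phi} = \varphi_{\Ainf}\circ\phi$, Theorem~\ref{TPTCcomp} rewrites as $\widehat{\fM}\otimes_{\Ainf,\varphi_{\Ainf}}\Ainf \simeq \pi_i\TP(\sT_{\sO_\Cu};\Z_p)$, and Theorem~\ref{TPTCnOCOCcomp} identifies the right-hand side with $\pi_i\TCn(\sT_{\sO_\Cu};\Z_p)\otimes_{\Ainf,\varphi_{\Ainf}}\Ainf$; cancelling the $\varphi_{\Ainf}$-twist (which is an auto-equivalence of $\Ainf$-modules since $\varphi_{\Ainf}$ is an automorphism) yields the displayed identification. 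Dualizing, using that $\fM$ is finite free by Theorem~\ref{BKtheorem}, gives $\widehat{N} \simeq \pi_i\TCn(\sT_{\sO_\Cu};\Z_p)^{\vee}$.

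Conditions (1) and (2) follow readily. For (1), the continuous $G_K$-action on $\sO_\Cu$ induces a continuous $\Ainf$-semilinear $G_K$-action on $\pi_i\TCn(\sT_{\sO_\Cu};\Z_p)$ by functoriality of $\TCn$ applied to $\sT_{\sO_\Cu} = \sT\otimes_{\sO_K}\sO_\Cu$, and $\widehat{N}$ inherits this. For (2), the Frobenius $\widehat{\varphi}_N$ is obtained by base change along $\phi$ and dualization from the BK structure on $\fM$, itself built (see \eqref{structuremapTCn}) from the cyclotomic Frobenius $\tilde{\varphi}^{hS^1}_\sT$ and the canonical map $\can_\sT$; each of these ingredients is a $G_K$-equivariant natural transformation of $\infty$-functors on $\Catsat(\sO_K)$, and together with the $G_K$-equivariance of $\varphi_{\Ainf}$ this forces $\widehat{\varphi}_N$ to commute with the $G_K$-action.

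For (3), Lemma~\ref{BMSTHHlemma} factors $\TCn(\sT/\bS[z];\Z_p)\to \TCn(\sT_{\sO_\Cu};\Z_p)$ through $\TCn(\sT_{\sO_\Cu}/\bS[z];\Z_p) \overset{\sim}{\to} \TCn(\sT_{\sO_\Cu};\Z_p)$, where the second arrow is $G_{K_\infty}$-equivariant while the source has trivial $G_{K_\infty}$-action; hence the image of $\fM$ in $\widehat{\fM}\simeq \pi_i\TCn(\sT_{\sO_\Cu};\Z_p)$ is $G_{K_\infty}$-fixed. Since $\phi(\mS)\subset \Ainf^{G_{K_\infty}}$ (because $[\pi^\flat]$ is $G_{K_\infty}$-fixed), $G_{K_\infty}$ acts on $\widehat{\fM}$ only through its action on the $\Ainf$-factor on elements coming from $\fM$; a direct computation with the semilinear action then shows $g \cdot (n\otimes 1) = n\otimes 1$ as a functional on $\widehat{\fM}$ for any $n\in N$ and $g \in G_{K_\infty}$, so $N \subset \widehat{N}^{G_{K_\infty}}$.

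The main obstacle is (4). My strategy is to combine integral $G_{K_\infty}$-fixedness from (3) with $G_{K^{ur}}$-fixedness after inverting $p$, and then invoke $G_K = G_{K_\infty}\cdot G_{K^{ur}}$. Reducing the identification of paragraph~1 modulo $\ker(\Ainf\to W(\ol{k}))$, inverting $p$, and applying Theorems~\ref{TPTCnOCOCcomp} and~\ref{TPTPOColkcomp} yields a $G_K$-equivariant isomorphism
\[
\widehat{N}\otimes_{\Ainf}W(\ol{k})[\tfrac{1}{p}] \simeq \pi_i\TP(\sT_{\ol{k}};\Z_p)^{\vee}[\tfrac{1}{p}]
\]
(the Frobenius twist by $\varphi_{\Ainf}$ descends to the automorphism $\varphi_{W(\ol{k})}$, which is harmless because $G_{K^{ur}}$ acts trivially on $W(\ol{k})$ and therefore commutes with it), and the right-hand side has trivial $G_{K^{ur}}$-action by Theorem~\ref{TPTPOColkcomp}. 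Meanwhile (3), combined with $g(1)=1$ in $W(\ol{k})$ for $g\in G_{K_\infty}$, shows the image of $N/zN$ in $\widehat{N}\otimes_{\Ainf}W(\ol{k})$ is $G_{K_\infty}$-fixed. Since $K_\infty/K$ is totally ramified, $G_{K_\infty}$ surjects onto $G_k = G_K/G_{K^{ur}}$, so $G_K = G_{K_\infty}\cdot G_{K^{ur}}$ as a set; combining the two fixedness statements yields $G_K$-fixedness of the image of $N/zN$ after inverting $p$, and hence integrally, because $\widehat{N}\otimes_{\Ainf}W(\ol{k})$ is a finite free $W(\ol{k})$-module and therefore $p$-torsion-free. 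This establishes (4) and completes the proof.
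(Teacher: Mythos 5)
Your verification of conditions (1)--(4) follows essentially the same route as the paper: you obtain the $G_{K_\infty}$-equivariant identification $\widehat{\fM}\simeq\pi_i\TCn(\sT_{\sO_\Cu};\Z_p)$ from Theorems~\ref{TPTCcomp} and~\ref{TPTCnOCOCcomp} by cancelling the $\varphi_{\Ainf}$-twist (this is the paper's \eqref{BIWAKOUP}), transport the $G_K$-action and match it with the Frobenius as in \eqref{FrobTCnOC}, get (3) from the fact that $g\in G_{K_\infty}$ acts as $1\otimes g$ (the paper's \eqref{kenka1}), and get (4) from the dualized comparison \eqref{freeTCTP} together with Theorem~\ref{TPTPOColkcomp} and $p$-torsion-freeness. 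The genuine difference is the closing ``in particular'': the paper deduces the Breuil--Kisin $G_K$-module structure by invoking \cite[Lemma 7.1.2]{Gao}, whereas you verify Definition~\ref{BKGKdefn}(4) directly — the image of $N/zN$ in $\widehat{N}\otimes_{\Ainf}W(\ol{k})$ is $G_{K_\infty}$-fixed by (3), $G_{K^{ur}}$-fixed by (4) plus injectivity into the $p$-inverted module (finite freeness over $W(\ol{k})$), and since $K_\infty/K$ is totally ramified one has $K_\infty\cap K^{ur}=K$, so $G_{K_\infty}$ surjects onto $\operatorname{Gal}(K^{ur}/K)$ and $G_K=G_{K_\infty}\cdot G_{K^{ur}}$ as a set, giving $G_K$-fixedness. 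This is a correct, self-contained substitute for Gao's lemma in the special case needed here; what it buys is independence from that reference, at the cost of redoing its content. Two small points to tighten: in the dualization step it is $\fM^{\vee}$, not $\fM$, that Theorem~\ref{BKtheorem} shows to be finite free — what you actually need is that $\fM$ is finitely generated over the noetherian ring $\mS$ and that $\phi:\mS\to\Ainf$ is flat, exactly as in the paper's passage from \eqref{BIWAKOUP} to \eqref{kenka1}; and in (2) the substantive point is that under \eqref{BIWAKOUP} the base-changed Frobenius $\widehat{\varphi}_{\fM}$ coincides with the composite of the cyclotomic Frobenius $\varphi^{hS^1}_{\sT_{\sO_\Cu}}$ with $\can_{\sT_{\sO_\Cu}}^{-1}$ after inverting $\xi$, which is $G_K$-equivariant because $G_K$ acts through ring automorphisms of $\sO_\Cu$; your appeal to ``$G_K$-equivariant natural transformations on $\Catsat(\sO_K)$'' should be formulated at the $\sO_\Cu$-level, since nothing over $\bS[z]$ carries a $G_K$-action.
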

\begin{proof}
After localization by $\xi\in \Ainf \simeq \pi_0 \TCn({\sO_{\Cu}};\Z_p)$, the morphism $\can_\sT$ induces a morphism of $\TP(\sO_\Cu;\Z_p)[\frac{1}{\xi}]$-module:
\begin{equation}\label{TCTPOCcan}
\TCn(\sT_{\sO_{\Cu}};\Z_p)[\frac{1}{\xi}] \otimes_{\TCn({\sO_{\Cu}};\Z_p)[\frac{1}{\xi}],\can_{{\sO_{\Cu}}}} \TP({\sO_{\Cu}};\Z_p)[\frac{1}{\xi}] \to \TP(\sT_{\sO_{\Cu}};\Z_p)[\frac{1}{\xi}].
\end{equation}
Both sides of the map \eqref{TCTPOCcan} yield symmetric monoidal functors from $\Catsat(\sO_K)$ to $\Mod_{\TP(\sO_{\sO_{\Cu}};\Z_p)[\frac{1}{\xi}]}(\Sp)$, and the map \eqref{TCTPOCcan} yields a symmetric monoidal natural transformation between them. Thus the morphism \eqref{TCTPOCcan} is an equivalence. Note that the morphism 
\[
\can_{{\sO_{\Cu}}}:\pi_*\TCn({\sO_{\Cu}};\Z_p)[\frac{1}{\xi}] \to \pi_*\TP({\sO_{\Cu}};\Z_p)[\frac{1}{\xi}]
\]
is an isomorphism (see \cite[Proposition 11.10]{BMS2}). This yields a $G_K$-equivariant isomorphism
\begin{equation}\label{canisom2}
\can_{\sT_{\sO_{\Cu}}}[\frac{1}{\xi}]:\pi_*\TCn(\sT_{\sO_{\Cu}};\Z_p)[\frac{1}{\xi}] \simeq \pi_*\TP(\sT_{\sO_{\Cu}};\Z_p)[\frac{1}{\xi}].
\end{equation}
Combined with \eqref{TCnTPOCAinfequi}, this yields a $G_K$-equivariant isomorphism
\begin{equation}\label{FrobTCnOC}
\pi_i\TCn(\sT_{\sO_\Cu};\Z_p)\otimes_{\Ainf,\varphi_{\Ainf}} \Ainf[\frac{1}{\xi}]\overset{\eqref{TCnTPOCAinfequi}}{\simeq} \pi_i\TP(\sT_{\sO_\Cu};\Z_p)[\frac{1}{\xi}] \overset{\eqref{canisom2}}{\simeq}\pi_i\TCn(\sT_{\sO_{\Cu}};\Z_p)[\frac{1}{\xi}]
\end{equation}
for any $i \geq n$. Since $\varphi_{\Ainf}$ is an isomorphism, by Theorem~\ref{TPTCcomp} and Theorem~\ref{TPTCnOCOCcomp}, we have a $G_{K_\infty}$-equivariant isomorphism
\begin{equation}\label{BIWAKOUP}
\pi_i\TCn(\sT/\bS[z];\Z_p)\otimes_{\mS,\phi}\Ainf\simeq \pi_i\TCn(\sT_{\sO_\Cu};\Z_p),
\end{equation}
 and we see the isomorphism \eqref{FrobTCnOC} is same as $\widehat{\varphi}_{\pi_i\TCn(\sT/\bS[z];\Z_p)}$. $G_K$ naturally acts on $\pi_i\TCn(\sT_{\sO_\Cu};\Z_p)$, which is $\pi_0 \TCn(\sO_{\Cu};\Z_p)\simeq \Ainf$-semi-linear. Since $\varphi:\mS\to\Ainf$ is flat, the dual of \eqref{BIWAKOUP} becomes the following $G_{K_\infty}$-equivariant isomoprhism
 \begin{equation}\label{kenka1} \pi_i\TCn(\sT/\bS[z];\Z_p)^{\vee}\otimes_{\mS,\phi}\Ainf\simeq \pi_i\TCn(\sT_{\sO_\Cu};\Z_p)^{\vee}.
 \end{equation}
Via this isomorphism, $\pi_i\TCn(\sT/\bS[z];\Z_p)^{\vee}\otimes_{\mS,\phi}\Ainf$ admits a continuous $\Ainf$-semi-linear $G_K$-action.
 
 Since the isomorphism~\eqref{FrobTCnOC} is $G_K$-equivariant, $G_K$ commutes with $\widehat{\varphi}_{\pi_i \TCn(\sT/\bS[z];\Z_p)^{\vee}}$ and we obtain the claim (1) and (2). 
 
Consider the $G_{K_\infty}$-equivariant isomorphism \eqref{kenka1}. Since $g\in G_{K_\infty}$ acts on $1\otimes g$ on $\pi_i\TCn(\sT/\bS[z];\Z_p)^{\vee}\otimes_{\mS,\phi}\Ainf$, we obtain the claim (3).
 
Since $\varphi_{\Ainf}:\Ainf \to \Ainf$ is flat, we have $G_K$-equivariant isomorphism
\begin{equation}\label{freeTCTP}
\pi_i\TCn(\sT_{\sO_\Cu};\Z_p)^{\vee}\otimes_{\Ainf,\varphi_{\Ainf}} \Ainf \simeq \pi_i\TP(\sT_{\sO_\Cu};\Z_p)^{\vee}
\end{equation}
by Theorem~\ref{TPTCnOCOCcomp}. Combined with Theorem~\ref{TPTPOColkcomp}, we see that $\pi_i\TCn(\sT_{\sO_\Cu};\Z_p)^{\vee}[\frac{1}{p}] \otimes_{\Ainf[\frac{1}{p}]} W(\ol{k})[\frac{1}{p}]$ is fixed by $G_{K^{ur}}$.

Since $\pi_i\TCn(\sT_{\sO_\Cu};\Z_p)^{\vee}$ is finite free $\mS$-module, there is a natural inclusion $\pi_i\TCn(\sT_{\sO_\Cu};\Z_p)^{\vee} \hookrightarrow\pi_i\TCn(\sT_{\sO_\Cu};\Z_p)^{\vee}[\frac{1}{p}]$ and it induces a natural inclusion
\[
\pi_i\TCn(\sT_{\sO_\Cu};\Z_p)^{\vee} \otimes_{\Ainf} W(\ol{k}) \hookrightarrow \pi_i\TCn(\sT_{\sO_\Cu};\Z_p)^{\vee}[\frac{1}{p}] \otimes_{\Ainf[\frac{1}{p}]} W(\ol{k})[\frac{1}{p}].
\]
Thus $\pi_i\TCn(\sT_{\sO_\Cu};\Z_p)^{\vee} \otimes_{\Ainf} W(\ol{k})$ is fixed by $G_{K^{ur}}$. The last claim follows from claims (1)-(4) and \cite[Lemma 7.1.2]{Gao}.
\end{proof}

For a Breuil-Kisin module $(\fM,\varphi_\fM)$, let $\ol{\varphi_{\fM}}$ denote the composite 
\[
\ol{\varphi_{\fM}}: \fM\otimes_{\mS} W(\Cu^{\flat}) \overset{\varphi_{\fM}'\otimes \varphi_{\Cu}}{\to} \fM[\frac{1}{E}]\otimes_{\mS[\frac{1}{E}]}W(\Cu^{\flat}) =\fM\otimes_{\mS} W(\Cu^{\flat})
\]
where $\varphi_{\fM}'$ is the composite $\fM \overset{\varphi}{\to} \fM\otimes_{\mS,\varphi}\mS[\frac{1}{E}] \overset{\varphi_{\fM}}{\to} \fM[\frac{1}{E}]$, and $\varphi_{\Cu}$ is the Frobenius of $W(\Cu^{\flat})$. Let us define a $\Z_p$-module:
\[
T_{\Ainf}(\fM):=\bigl(\widehat{\fM}\otimes_{\Ainf} W(\Cu^{\flat})\bigr)^{\ol{\varphi_{\fM}}=1}.
\]
If $(\fM,\varphi_\fM)$ is equipped with a Breuil-Kisin $G_K$-module structure, then $T_{\Ainf}(\fM)$ inherits a natural structure of a $\Z_p[G_K]$-module.

\begin{thm}\label{mainwithproof}
\begin{itemize}
  \item[(1)] For a smooth proper $\sO_K$-linear category $\sT$ and a sufficiently large $i$, the $\Z_p[G_K]$-module $T_{\Ainf}(\pi_i\TCn(\sT/\bS[z];\Z_p)^{\vee})$ is a $\Z_p$-lattice of a semi-stable representation.

  \item[(2)] We assume $\sT_{\Cu}$ admits a geometric realization. Then there is a $G_K$-equivariant isomorphism
\[
\pi_i \LK K(\sT_{\Cu})^{\vee} \simeq T_{\Ainf}(\pi_i\TCn(\sT/\bS[z];\Z_p)^{\vee}).
\]
In particular, $\pi_i \LK K(\sT_{\Cu})\otimes_{\Z_p}\Q_p$ is a semi-stable representation.
\end{itemize}
\end{thm}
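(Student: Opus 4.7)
For part (1), the plan is to apply Gao's theory of Breuil-Kisin $G_K$-modules directly. Theorem~\ref{BKGKmodulethm} verifies that, for all sufficiently large $i$, the dual $\fM := \pi_i\TCn(\sT/\bS[z];\Z_p)^{\vee}$ satisfies the four conditions of Definition~\ref{BKGKdefn}, so it is a Breuil-Kisin $G_K$-module. Gao's main theorem \cite{Gao} together with Du-Liu's $(\varphi,\hat{G})$-module framework \cite{DuLiu} then endows $T_{\Ainf}(\fM) := \bigl(\widehat{\fM}\otimes_{\Ainf}W(\Cu^{\flat})\bigr)^{\overline{\varphi_{\fM}}=1}$ with a continuous $G_K$-action and identifies it as a $\Z_p$-lattice inside a semi-stable $G_K$-representation. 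Applied to our $\fM$, this is claim (1).

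For part (2), the idea is to produce the identification $T_{\Ainf}(\fM) \simeq \pi_i\LK K(\sT_\Cu)^{\vee}$ by tensoring the $K(1)$-local comparison up to $W(\Cu^{\flat})$ and extracting Frobenius-invariants. Dualizing Theorem~\ref{NCBMS}(2), one obtains a $G_K$-equivariant isomorphism
\[
\fM\otimes_{\mS,\overline{\phi}}\Ainf[\tfrac{1}{\mu}]^{\wedge}_p \;\simeq\; \pi_i\LK K(\sT_\Cu)^{\vee}\otimes_{\Z_p}\Ainf[\tfrac{1}{\mu}]^{\wedge}_p.
\]
Using $\overline{\phi}=\varphi_{\Ainf}\circ\phi$ from diagram~\eqref{SAinfdiag}, the left-hand side becomes $\widehat{\fM}\otimes_{\Ainf,\varphi_{\Ainf}}\Ainf[\frac{1}{\mu}]^{\wedge}_p$, so the cyclotomic Frobenius $\widehat{\varphi}_\fM$ transports (after further base change to $W(\Cu^{\flat})$) to $\mathrm{id}_{\pi_i\LK K(\sT_\Cu)^{\vee}}$ tensored with the Frobenius of $W(\Cu^{\flat})$ on the right, because the cyclotomic trace sends the Bott element $\beta$ to $([\epsilon]-1)\sigma = \mu\sigma$ (see the proof of Theorem~\ref{KTCncomp}) and $\LK K(\sT_\Cu)$ carries no nontrivial Frobenius structure. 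Taking $\overline{\varphi_\fM}=1$ fixed points after further extending scalars to $W(\Cu^{\flat})$ then yields
\[
T_{\Ainf}(\fM) \;\simeq\; \pi_i\LK K(\sT_\Cu)^{\vee}\otimes_{\Z_p}W(\Cu^{\flat})^{\varphi=1} \;=\; \pi_i\LK K(\sT_\Cu)^{\vee},
\]
using $W(\Cu^{\flat})^{\varphi=1}=\Z_p$ and the finite generation of $\pi_i\LK K(\sT_\Cu)^{\vee}$ over $\Z_p$, so that Frobenius-invariants commute with the relevant tensor product. Combining with (1), the representation $\pi_i\LK K(\sT_\Cu)\otimes_{\Z_p}\Q_p$ is semi-stable.

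The main obstacle is the Frobenius-bookkeeping in part (2): one must verify that after dualizing the chain of comparisons in Theorems~\ref{TPTCcomp}, \ref{TPTCnOCOCcomp} and \ref{KTCncomp}, the structure map $\widehat{\varphi}_\fM$ really becomes $\mathrm{id}\otimes\varphi_{\Ainf}$ on the $K$-theory side, equivalently, that the Bott element's rule $\beta\mapsto\mu\sigma$ witnesses the triviality of Frobenius on $\LK K$. Once this compatibility is pinned down, the passage to Frobenius-invariants on $W(\Cu^{\flat})$ is formal, and part (1) supplies the semi-stability.
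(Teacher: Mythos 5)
Your proposal is correct and follows essentially the same route as the paper: part (1) is exactly the paper's one-line appeal to Theorem~\ref{BKGKmodulethm} together with Gao's \cite[Theorem 7.1.7]{Gao}, and for part (2) the paper likewise runs the $K(1)$-local comparison up to $W(\Cu^{\flat})$ and takes $\varphi=1$ fixed points, establishing the Frobenius compatibility you flag as the main obstacle via the naturality square for the cyclotomic Frobenius $\varphi^{hS^1}_{-}$ applied to the K\"unneth equivalence $\LK K(\sT_\Cu)\otimes_{\LK K(\Cu)}\LK(\TCn(\sO_\Cu;\Z_p)[\frac{1}{\xi}])\simeq \LK(\TCn(\sT_{\sO_\Cu};\Z_p)[\frac{1}{\xi}])$ (the diagram~\eqref{GKequidiagram}), which is precisely the rigorous form of your claim that $\widehat{\varphi}_{\fM}$ becomes $\mathrm{id}\otimes\varphi$ on the $K$-theory side because $\beta\mapsto\mu\sigma$.
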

\begin{proof} (1)  By Theorem~\cite[Theorem 7.1.7]{Gao}, we obtain the claim. 

(2) Let $\sT$ be a smooth proper $\sO_K$-linear category. We assume $\sT_{\Cu}$ admits a geometric realization. By the same discussion as the proof of Theorem~\ref{KTCncomp}, we have an equivalence
\begin{equation}\label{KTCnequi2}
\LK K(\sT_\Cu)\otimes_{\LK K(\Cu)} \LK (\TCn(\sO_\Cu;\Z_p)[\frac{1}{\xi}]) \to \LK (\TCn(\sT_{\sO_\Cu};\Z_p)[\frac{1}{\xi}]). 
\end{equation}
By the functionality of cyclotomic Frobenius $\varphi_{-}^{hS^1}:\TCn(-;\Z_p) \to \TP(-;\Z_p)$, we have the following commutative diagram
\[
\xymatrix{
\LK K(\sT_\Cu)\otimes_{\LK K(\Cu)} \LK (\TCn(\sO_\Cu;\Z_p)[\frac{1}{\xi}]) \ar[r]_-{\simeq}^-{\eqref{KTCnequi2}} \ar[d]_-{\text{id}\otimes \varphi_{{\sO_{\Cu}}}^{hS^1}} & \LK (\TCn(\sT_{\sO_\Cu};\Z_p)[\frac{1}{\xi}]) \ar[d]^-{ \varphi_{\sT_{\sO_{\Cu}}}^{hS^1}}\\
\LK K(\sT_\Cu)\otimes_{\LK K(\Cu)} \LK (\TP(\sO_\Cu;\Z_p)[\frac{1}{\tilde{\xi}}])  \ar[r]^-{\simeq}_-{\eqref{KTCnequi}}& 
 \LK (\TP(\sT_{\sO_\Cu};\Z_p)[\frac{1}{\tilde{\xi}}])
 }
\]
We note $\xi | \mu$ in $\Ainf$. Since $\varphi_{\sO_{\Cu}}^{hS^1}:\pi_0\LK\TCn(\sO_{\Cu};\Z_p)[\frac{1}{\xi}]\to \pi_0\LK \TP(\sO_{\Cu};\Z_p)[\frac{1}{\tilde{\xi}}]$ is given by the Frobenius $\varphi_{\Ainf}:\Ainf[\frac{1}{\mu}]^{\wedge}_p \to \Ainf[\frac{1}{\varphi(\mu)}]^{\wedge}_p$, on homotopy groups, we have $G_K$-equivariant diagram
\[
\xymatrix{
\pi_i\LK K(\sT_\Cu)\otimes_{\Z_p} \Ainf[\frac{1}{\mu}]^{\wedge}_p \ar[r]^-{\simeq}\ar[d]_-{\text{id}\otimes \varphi_{\Ainf}} & \pi_i\TCn(\sT_{\sO_\Cu};\Z_p)[\frac{1}{\mu}]^{\wedge}_p \ar[d]^-{ \varphi_{\sT_{\sO_{\Cu}}}^{hS^1}} \\
\pi_i\LK K(\sT_\Cu)\otimes_{\Z_p} \Ainf[\frac{1}{\varphi(\mu)}]^{\wedge}_p  \ar[r]_-{\simeq}& 
 \pi_i\TP(\sT_{\sO_\Cu};\Z_p)[\frac{1}{\varphi(\mu)}]^{\wedge}_p 
 }
\]
After the base change along $\Ainf[\frac{1}{\mu}]^{\wedge}_p \to W(\Cu^{\flat})$, the above diagram becomes the following $G_K$-equivariant diagram.
\begin{equation}\label{GKequidiagram}
\xymatrix{
\pi_i\LK K(\sT_\Cu)\otimes_{\Z_p} W(\Cu^{\flat}) \ar[r]^-{\simeq}\ar[d]_-{\text{id}\otimes \varphi_{\Cu}} & \pi_i\TCn(\sT_{\sO_\Cu};\Z_p)\otimes_{\Ainf}W(\Cu^{\flat}) \ar[d]^-{ \varphi_{\sT_{\sO_{\Cu}}}^{hS^1}\otimes \varphi_{\Cu}} \ar[rd]^{\widehat{\varphi}\otimes \varphi_{\Cu} }& \\
\pi_i\LK K(\sT_\Cu)\otimes_{\Z_p} W(\Cu^{\flat})  \ar[r]_-{\simeq}& 
     \pi_i\TP(\sT_{\sO_\Cu};\Z_p)\otimes_{\Ainf}W(\Cu^{\flat}) & \pi_i\TCn(\sT_{\sO_\Cu};\Z_p)\otimes_{\Ainf}W(\Cu^{\flat}) \ar[l]_-{\can_{\sT_{\sO_{\Cu}}}}^-{\simeq}
 }
\end{equation}
Thus the isomoprhism $\pi_i\LK K(\sT_\Cu)\otimes_{\Z_p} W(\Cu^{\flat})\simeq \pi_i\TCn(\sT_{\sO_\Cu};\Z_p)\otimes_{\Ainf}W(\Cu^{\flat})$ is Frobenius-equivariant. Since $\Z_p\to \Ainf$ and $\Ainf \to W(\Cu^\flat)$ are flat and compatible with Frobenius automorphisms, the dual of the isomorphism becomes the Frobenius-equivariant isomorphism
\[
\pi_i\LK K(\sT_\Cu)^{\vee}\otimes_{\Z_p} W(\Cu^{\flat})\simeq \pi_i\TCn(\sT_{\sO_\Cu};\Z_p)^{\vee}\otimes_{\Ainf}W(\Cu^{\flat})
\]
and we obtain the claim.
\end{proof}

\begin{remark}\label{rem38}
Similarly, we can obtain that there is an isomorphism of $\Z_p[G_K]$-modules:
\[
\pi_i \LK K(\sT_{\Cu})^{\vee\vee} \simeq T_{\Ainf}(\pi_i\TCn(\sT/\bS[z];\Z_p)^{\vee\vee}).
\]
\end{remark}

\section{{$(\varphi,\hat{G})$}-modules and crystalline representations}\label{sec4}

In this section, we will prove Theorem~\ref{maininintro}. We would like to thank H.Gao \cite{lettertokeiho} for sharing the strategy and key constructions to prove Theorem~\ref{maininintro}. 
 \subsection{Breuil-Kisin {$G_K$}-modules and {$(\varphi,\hat{G})$}-modules}
 Let us recall $(\varphi,\hat{G})$-module from \cite{DuLiu}. Let $L:=\bigcup_{n=1}^{\infty}K_{\infty}(\zeta_{{n}})$, $\hat{G}:=\text{Gal} (L/K)$ and $H_K:=\text{Gal}(L/K_{\infty})$. Du-Liu construct $\mS$-algebras $\mStw$ and $\mSsttw$ as the following \cite[section 2.1 and 2.3]{DuLiu}. We set $\mS^{\hat{\otimes} 2}:= \mS[[y-z]]=W[[z,y-z]]$, $\mS^{\hat{\otimes} 2}$ is $\mS\otimes_{\Z_p} \mS$-algebra via $z\otimes 1 \mapsto z$, $1\otimes z \mapsto y=(y-z)+z$, in this way, one can extend Frobenius action on $\mS$ to $\mS^{\hat{\otimes} 2}$ which is Frobenius on $W$ and  sends $z$ to $z^p$ and $y-z$ to $y^p-z^p$. Let $\mS^{(2)}$ be $\mS[[y-z]]\{\frac{y-z}{E}\}^{\wedge}_{\delta}$, where $\{-\}^{\wedge}_{\delta}$ means
    freely adjoining elements in the category of $(p,E)$-completed $\delta$-$\mS$-algebras \cite[section 2.1 and 4.1]{DuLiu}. $\mS^{(2)}$ is a $\mS$-algebra via $u \mapsto u$, and the structure map is flat (see \cite[Proposition 2.2.7]{DuLiu}), and is a sub-$\mS$-algebra of $\Ainf$, where we regard $\Ainf$ is as a $\mS$-algebra via $\phi$ (see \cite[section 2.4]{DuLiu}). Let us recall the definition of $\mSsttw$ from \cite[section 2.3]{DuLiu}. Define a Frobenius action on $W[[z,\mathfrak{y}]]$ which sends $x$ to $x^p$ and $\mathfrak{y}$ to $(1+\mathfrak{y})^p-1$ and set $\mathfrak{w}=\frac{\mathfrak{y}}{E}$. Let $\mSsttw=\mS[[z,\mathfrak{y}]]\{\mathfrak{w}\}^{\wedge}_{\delta}$, and it is a sub-$\mS$-algebra of $\Ainf$, where we regard $\Ainf$ is as a $\mS$-algebra via $\phi$ (see \cite[section 2.4]{DuLiu}). There is a natural inclusion of $\mS$-algebra $W[[z,y-z]] \subset W[[z,\mathfrak{y}]]$ which sends $y$ to $z(\mathfrak{y}+1)$, it is $\delta$-ring map, by the constructions we have an inclusion of sub-$\mS$-algebras $\mStw \hookrightarrow \mSsttw$ of $\Ainf$. Let us denote $\theta_0:\mS \to \mStw~|~z\mapsto z$, and denote $\theta_1:\mS \to \mStw~|~z\mapsto y$. We note that there is a commutative diagram (see \cite[Corollary 2.4.5]{DuLiu}):
    \begin{equation}\label{mSmS2Ainf}
    \xymatrix{
    \mS \ar@{^{(}-_>}[d]_-{\theta_0} \ar@{^{(}-_>}[rrd]^{\phi} & & \\
    \mStw \ar@{^{(}-_>}[r] \ar@/_18pt/[rr]_{\iota} & \mSsttw \ar@{^{(}-_>}[r] & \Ainf 
    }
    \end{equation}
where $\iota$ sends $z$ to $[\pi^{\flat}]$ and $y$ to $[\epsilon]\cdot [\pi^{\flat}]$. We regard $\mStw$ and $\mSsttw$ as $\mS$-algebras via the diagram unless otherwise stated. 
    
\begin{defn}[{\cite[Definition 3.3.2]{DuLiu}}]
    Let $(\fM,\varphi_{\fM})$ be a finite free Breuil-Kisin module of finite $E$-height. We call $(\fM,\varphi_{\fM})$ $(\varphi,\hat{G})$-module if it satisfies the following conditions.
    \begin{itemize}
        \item[(1)] There is a continuous $\mSsttw$-semi-linear $\hat{G}$-action on $\fM\otimes_{\mS}\mSsttw$.
        \item[(2)] $\hat{G}$ commutes with $\varphi$ on $\fM\otimes_{\mS}\mSsttw$.
        \item[(3)] $\fM \subset (\fM\otimes_{\mS}\mSsttw)^{H_K}$.
        \item[(4)] $\hat{G}$ acts on $(\fM\otimes_{\mS}\mSsttw) \otimes_{\mSsttw}W(k)$ trivially. 
    \end{itemize}
\end{defn}

\begin{remark}\label{remark4.2} For a Breuil-Kisin $G_K$-module $(\fM,\varphi_{\fM},G_K\curvearrowright \fM\otimes_{\mS,\phi}\Ainf)$, the $p$-adic representation $T_{\Ainf}(\fM)$ is a $\Z_p$-lattice of semi-stable representation of non-negative Hodge-Tate weight \cite[Theorem 7.1.7]{Gao}. Du-Liu proved that the sub-modules $\fM\otimes_{\mS}\mSsttw \subset \fM \otimes_{\mS,\phi} \Ainf \subset  T_{\Ainf}(\fM)\otimes_{\Z_p}\Ainf$ are $G_K$-stable \cite[Theorem 3.3.3]{DuLiu} (see also \cite[Proposition 3.1.3]{Liu10}). By Definition \ref{BKGKdefn} (3), $G_L=\text{Gal}(\ol{K}/L) (\subset G_{K_\infty})$ acts on $\fM\subset \fM\otimes_{\mS,\phi}\Ainf$ trivially, thus $G_{K}$-action on $\fM\otimes_{\mS}\mSsttw$ factors through $\hat{G}$. The $\hat{G}$-action on $\fM\otimes_{\mS}\mSsttw$ induces the $(\varphi,\hat{G})$-module structure on $(\fM,\varphi_{\fM})$ \cite[Theorem 3.3.3]{DuLiu}. 
\end{remark}

\subsection{The comparison theorem between $\TP(\sT/\bS[z];\Z_p)$ and $\TP(\sT/\bS[z_0,z_1];\Z_p)$} We regard $\sO_K$ as an $\bS[z_0,z_1]$-algebra via the map $z_0,z_1 \mapsto \pi$. In \cite{LiuWang}, Liu-Wang revealed the structure of $\TP(\sO_K/\bS[z_0,z_1];\Z_p)$: there is an isomorphism 
\[
\pi_0\TP(\sO_K/\bS[z_0,z_1];\Z_p)\simeq \mStw,
\]
where we identify $z_0=z$ and $z_1=y$ (see \cite[Theorem 1.3]{LiuWang}). In this section, we will compare $\TP(\sT/\bS[z];\Z_p)$ with $\TP(\sT/\bS[z_0,z_1];\Z_p)$ for an $\sO_K$-linear category $\sT$. We note that the functor 
\[
\TP(-/\bS[z_0,z_1];\Z_p):\Cat(\sO_K) \overset{\THH(-/\bS[z_0,z_1];\Z_p)}{\to} \Sp^{BS^1} \overset{(-)^{tS^1}}{\to} \Sp
\]
is a lax symmetric monoidal, $\THH(\sO_K/\bS[z_0,z_1];\Z_p)$ admits an $\mathbb{E}_\infty$-ring structure and $\TP(\sO_K/\bS[z_0,z_1];\Z_p)$ also admits an $\mathbb{E}_\infty$-ring structure. For $i=0,1$, the map of ring spectra $\bS[z] \overset{z\mapsto z_i}{\to} \bS[z_0,z_1]$ induces a morphism of $\mathbb{E}_\infty$-ring spectra
\begin{equation}\label{TPringsp}
e_i:\TP(\sO_K/\bS[z];\Z_p) \overset{z\mapsto z_i}{\to} \TP(\sO_K/\bS[z_0,z_1];\Z_p).
\end{equation}
For $\heartsuit \in \{\THH , \TP , \TCn \}$ and an $\sO_K$-linear category $\sT$, the left unit $e_{0,\sT}$ and right unit $e_{1,\sT}$ are the maps
\[
\heartsuit (\sT/\bS[z];\Z_p) \to \heartsuit (\sT/\bS[z_0,z_1];\Z_p)
\]
induced by $z\mapsto z_0$ and $z\mapsto z_1$ respectively. Let us denote by $\THH(\bS[z_0,z_1]/{}_0\bS[z])$ (resp. $\THH(\bS[z_0,z_1]/{}_1\bS[z])$) the relative topological Hochschild homology of $\bS[z] \to \bS[z_0,z_1] ~|~z \to z_0$ (resp. $\bS[z] \to \bS[z_0,z_1] ~|~z \to z_1$). There is a commutative diagram in $\Mod_{\THH(\bS[z])}(\Sp^{BS^1})$:
\[
\xymatrix{
&\bS[z_0,z_1]^{\wedge}_p\ar[r] & \THH(\sT/\bS[z_0,z_1];\Z_p)\\
\bS[z]^{\wedge}_p \ar[r]^-{z\mapsto z_i}& \THH(\bS[z_0,z_1]/{}_i\bS[z];\Z_p)\ar[r] \ar[u] & \THH(\sT/\bS[z];\Z_p)\ar[u]_{e_{i,\sT}}\\
\THH(\bS[z];\Z_p)\ar[r]_-{e_i} \ar[u]& \THH(\bS[z_0,z_1];\Z_p)\ar[r] \ar[u]& \THH(\sT;\Z_p) \ar[u]
}
\]
Since the bottom-left square and the bottom square are pushout squares (see \cite[Lemma~2.3]{LiuWang}), 
Similarly since the right square is pushout square, the top-right square is also pushout square. Thus we have the following equivalence (transitivity property of relative $\THH$)
\begin{equation}\label{THHtransitivity}
\THH(\sT/\bS[z_0,z_1];\Z_p) \simeq \THH(\sT/\bS[z];\Z_p)\otimes_{\THH(\bS[z_0,z_1]/{}_i\bS[z];\Z_p)} \bS[z_0,z_1]^{\wedge}_p.
\end{equation}
We also have the following commutative diagram:
\[
\xymatrix{
\THH(\sT/\bS[z];\Z_p)\ar[r]^{e_{i,\sT}}& \THH(\sT/\bS[z_0,z_1];\Z_p)\\
\THH(\sO_K/\bS[z];\Z_p)\ar[r]^-{e_i} \ar[u]& \THH(\sO_K/\bS[z_0,z_1];\Z_p)\ar[u] \\
\THH(\bS[z_0,z_1]/{}_i\bS[z];\Z_p)\ar[r] \ar[u]& \bS[z_0,z_1]^{\wedge}_p\ar[u]
}
\]
Applying $\sT=\perf(\sO_K)$ for the equivalence~\eqref{THHtransitivity}, then we see the bottom and total square are pushout squares. We know that the top square is a pushout square. Thus we have the following $S^1$-equivariant equivalence
\begin{equation}\label{THHtransivity2}
\THH(\sT/\bS[z_0,z_1];\Z_p) \simeq \THH(\sT/\bS[z];\Z_p) \otimes_{\THH(\sO_K/\bS[z];\Z_p)}\THH(\sO_K/\bS[z_0,z_1];\Z_p).
\end{equation}
Notice an exact symmetric monoidal $\infty$-functor
\[
-\otimes_{\THH(\sO_K/\bS[z];\Z_p),e_i}\THH(\sO_K/\bS[z_0,z_1];\Z_p)
\]
from $\Mod_{\THH(\sO_K/\bS[z];\Z_p)}(\Sp^{BS^1})$ to $\Mod_{\THH(\sO_K/\bS[z_0,z_1];\Z_p)}(\Sp^{BS^1})$.
If $\sO_K$-linear category $\sT$ is smooth proper, $\THH(\sT/\bS[z];\Z_p)$ is dualizable and perfect in $\Mod_{\THH(\sO_K/\bS[z];\Z_p)}(\Sp^{BS^1})$. Since the functor $-\otimes_{\THH(\sO_K/\bS[z];\Z_p)}\THH(\sO_K/\bS[z_0,z_1];\Z_p)$ is an exact symmetric monoidal, we see the following lemma:
\begin{lemma}\label{perfTHHz1z2}
For a smooth proper $\sO_K$-linear category $\sT$, $\THH(\sT/\bS[z_0,z_1];\Z_p)$ is dualizable and perfect in $\Mod_{\THH(\sO_K/\bS[z_0,z_1];\Z_p)}(\Sp^{BS^1})$.
\end{lemma}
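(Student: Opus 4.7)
The plan is to deduce this lemma directly from Proposition~\ref{TPTCkunneth} (which already gave dualizability and perfectness of $\THH(\sT/\bS[z];\Z_p)$ over $\THH(\sO_K/\bS[z];\Z_p)$) together with the transitivity equivalence~\eqref{THHtransivity2}. The underlying principle is that dualizability and perfectness are both preserved under exact symmetric monoidal functors, and the relevant base change functor is precisely of that kind.

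First, I would write $F$ for the base change $\infty$-functor
\[
F := -\otimes_{\THH(\sO_K/\bS[z];\Z_p),e_0}\THH(\sO_K/\bS[z_0,z_1];\Z_p):\Mod_{\THH(\sO_K/\bS[z];\Z_p)}(\Sp^{BS^1}) \to \Mod_{\THH(\sO_K/\bS[z_0,z_1];\Z_p)}(\Sp^{BS^1}),
\]
noting that it is a symmetric monoidal left adjoint (hence colimit-preserving) and is exact because its source and target are stable. Since $\sT$ is smooth proper, $\sT$ is dualizable in $\Cat(\sO_K)$, so $\THH(\sT/\bS[z];\Z_p)$ is dualizable in $\Mod_{\THH(\sO_K/\bS[z];\Z_p)}(\Sp^{BS^1})$ via the symmetric monoidal functor $\THH(-/\bS[z];\Z_p)$, and it is perfect by Proposition~\ref{TPTCkunneth} (which invokes Proposition~\ref{propdualisperf}).

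Next, I would invoke two formal facts: (a) any symmetric monoidal functor preserves dualizable objects, since both the object and its dual are sent to dual pairs; (b) any exact symmetric monoidal functor sends the thick subcategory generated by the unit to the thick subcategory generated by the unit, and therefore preserves perfect objects in the sense of \cite[Definition 1.2]{KunnethTP}. Applying these to $F$ shows that $F\bigl(\THH(\sT/\bS[z];\Z_p)\bigr)$ is both dualizable and perfect in $\Mod_{\THH(\sO_K/\bS[z_0,z_1];\Z_p)}(\Sp^{BS^1})$. By equivalence~\eqref{THHtransivity2}, this image is precisely $\THH(\sT/\bS[z_0,z_1];\Z_p)$, which concludes the argument.

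I do not expect any serious obstacle here; the statement is formal once~\eqref{THHtransivity2} is in hand. The only point requiring a line of justification is the symmetric monoidality and exactness of $F$ in the $S^1$-equivariant setting, which follows from the general fact that base change along a map of $\mathbb{E}_\infty$-algebras in a presentably symmetric monoidal stable $\infty$-category is always exact and symmetric monoidal (compare the identical use of base change for $\Mod_{\THH(\sO_K/\bS[z];\Z_p)}(\Sp^{BS^1}) \to \Mod_{\THH(\sO_{\Cu}/\bS[z];\Z_p)}(\Sp^{BS^1})$ in the proof of Proposition~\ref{THHOCperfect}).
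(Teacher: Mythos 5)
Your proposal is correct and follows exactly the route the paper takes: the text preceding the lemma establishes the equivalence~\eqref{THHtransivity2} and then deduces the lemma by applying the exact symmetric monoidal base-change functor $-\otimes_{\THH(\sO_K/\bS[z];\Z_p),e_i}\THH(\sO_K/\bS[z_0,z_1];\Z_p)$ to the dualizable, perfect object $\THH(\sT/\bS[z];\Z_p)$ furnished by Proposition~\ref{TPTCkunneth}. Your added justification of why such functors preserve dualizability and perfectness is a harmless elaboration of the same argument.
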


\begin{prop}\label{Kunnethz1z2}
    Let $\sT_1,\sT_2$ be smooth proper $\sO_K$-linear categories. There are equivalences:
    \[
    \TCn(\sT_1/\bS[z_0,z_1];\Z_p) \otimes_{\TCn(\sO_K/\bS[z_0,z_1];\Z_p)}     \TCn(\sT_2/\bS[z_0,z_1];\Z_p)  \simeq \TCn(\sT_1\otimes_{\sO_K} \sT_2/\bS[z_0,z_1];\Z_p),
    \]
    \[
    \TP(\sT_1/\bS[z_0,z_1];\Z_p) \otimes_{\TP(\sO_K/\bS[z_0,z_1];\Z_p)}     \TP(\sT_2/\bS[z_0,z_1];\Z_p)  \simeq \TP(\sT_1\otimes_{\sO_K} \sT_2/\bS[z_0,z_1];\Z_p).
    \]
\end{prop}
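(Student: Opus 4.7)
The plan is to follow the same template as Proposition~\ref{TPTCkunneth} in its new setting. The two symmetric monoidal $\infty$-functors
\[
(-)^{hS^1},\,(-)^{tS^1}:\Mod_{\THH(\sO_K/\bS[z_0,z_1];\Z_p)}(\Sp^{BS^1}) \to \Mod_{\TCn(\sO_K/\bS[z_0,z_1];\Z_p)}(\Sp),\,\Mod_{\TP(\sO_K/\bS[z_0,z_1];\Z_p)}(\Sp)
\]
are lax symmetric monoidal and exact by \cite[Corollary I.4.3]{NS18}. Recalling from the beginning of Section 1.2 that a lax symmetric monoidal exact functor $F$ turns the natural map $F(X)\otimes_{F(1)}F(Y)\to F(X\otimes Y)$ into an equivalence whenever $X$ is perfect, it will be enough to establish the Künneth equivalence
\begin{equation}\label{THHKunnethz1z2}
\THH(\sT_1/\bS[z_0,z_1];\Z_p)\otimes_{\THH(\sO_K/\bS[z_0,z_1];\Z_p)}\THH(\sT_2/\bS[z_0,z_1];\Z_p)\simeq \THH(\sT_1\otimes_{\sO_K}\sT_2/\bS[z_0,z_1];\Z_p)
\end{equation}
in $\Mod_{\THH(\sO_K/\bS[z_0,z_1];\Z_p)}(\Sp^{BS^1})$, together with the perfectness of $\THH(\sT_1/\bS[z_0,z_1];\Z_p)$ over $\THH(\sO_K/\bS[z_0,z_1];\Z_p)$.

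First I would note that
\[
\THH(-/\bS[z_0,z_1];\Z_p):\Cat(\sO_K)\to \Mod_{\THH(\sO_K/\bS[z_0,z_1];\Z_p)}(\Sp^{BS^1})
\]
is symmetric monoidal, so that it carries the tensor product $\sT_1\otimes_{\sO_K}\sT_2$ to the tensor product of the individual $\THH$'s; this yields the Künneth equivalence \eqref{THHKunnethz1z2} directly. Since smooth proper $\sO_K$-linear categories are dualizable in $\Cat(\sO_K)$ (cf.\ \cite[Ch.~11]{Lurie17}), the perfectness of $\THH(\sT_i/\bS[z_0,z_1];\Z_p)$ in $\Mod_{\THH(\sO_K/\bS[z_0,z_1];\Z_p)}(\Sp^{BS^1})$ is exactly the content of Lemma~\ref{perfTHHz1z2}.

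Finally, applying $(-)^{hS^1}$ to \eqref{THHKunnethz1z2} and invoking perfectness of $\THH(\sT_1/\bS[z_0,z_1];\Z_p)$ to commute the functor past the relative tensor product gives the claimed equivalence for $\TCn$; applying $(-)^{tS^1}$ gives the one for $\TP$. The only potential subtlety is the passage from the lax symmetric monoidal property to the equivalence for relative tensor products over $\THH(\sO_K/\bS[z_0,z_1];\Z_p)$ rather than over the sphere — but this is the standard consequence of perfectness used already in the proof of Proposition~\ref{TPTCkunneth}, and no new calculation is needed here.
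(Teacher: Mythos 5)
Your proposal is correct and follows essentially the same route as the paper: the paper's proof of Proposition~\ref{Kunnethz1z2} simply invokes Lemma~\ref{perfTHHz1z2} and repeats the argument of Proposition~\ref{TPTCkunneth}, i.e.\ symmetric monoidality of $\THH(-/\bS[z_0,z_1];\Z_p)$ on dualizable objects of $\Cat(\sO_K)$, perfectness of $\THH(\sT_1/\bS[z_0,z_1];\Z_p)$, and the lax symmetric monoidal exact functors $(-)^{hS^1}$ and $(-)^{tS^1}$ of \cite[Corollary I.4.3]{NS18}. Your closing remark is also resolved exactly as you suspect: the monoidal structure on $\Mod_{\THH(\sO_K/\bS[z_0,z_1];\Z_p)}(\Sp^{BS^1})$ is already the relative tensor product, so the general perfectness criterion from the paper's discussion of \eqref{laxmon} applies verbatim.
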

\begin{proof}
    By Lemma~\ref{perfTHHz1z2}, one can prove using the same argument as the proof of Proposition~\ref{TPTCkunneth}.
\end{proof}

\begin{prop}\label{TPTCnSzSz1z2}
Let $\sT_1,\sT_2$ be smooth proper $\sO_K$-linear categories. There are equivalences
\[
\TP(\sT/\bS[z];\Z_p) \otimes_{\TP(\sO_K/\bS[z];\Z_p),e_{i}} \TP(\sO_K/\bS[z_0,z_1];\Z_p) 
 \simeq \TP(\sT/\bS[z_0,z_1];\Z_p) 
\]
\[
\TCn(\sT/\bS[z];\Z_p) \otimes_{\TCn(\sO_K/\bS[z];\Z_p),e_{i}} \TCn(\sO_K/\bS[z_0,z_1];\Z_p) 
 \simeq \TCn(\sT/\bS[z_0,z_1];\Z_p) 
\]
for $i=0,1$. 

\end{prop}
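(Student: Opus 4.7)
The plan is to deploy the same strategy already used in Proposition~\ref{TPTCkunneth}, Theorem~\ref{BKtheorem} and Theorem~\ref{TPTCcomp}: exhibit both sides of each claimed equivalence as symmetric monoidal functors from $\Catsat(\sO_K)$ and interpret the claimed map as a symmetric monoidal natural transformation between them. Since every object of $\Catsat(\sO_K)$ is dualizable, Proposition~\ref{compmonoid} will then automatically promote the natural transformation to an equivalence. This is a clean two-step template: (a) build the natural map and check monoidality, (b) invoke Proposition~\ref{compmonoid}.

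First, I construct the natural map. Applying the lax symmetric monoidal, exact functors $(-)^{hS^1}$ and $(-)^{tS^1}$ to the $S^1$-equivariant transitivity equivalence~\eqref{THHtransivity2} (which itself is a symmetric monoidal comparison in $\Mod_{\THH(\sO_K/\bS[z_0,z_1];\Z_p)}(\Sp^{BS^1})$) produces, for each $\sO_K$-linear category $\sT$, the natural map
\[
\TCn(\sT/\bS[z];\Z_p) \otimes_{\TCn(\sO_K/\bS[z];\Z_p),e_{i}} \TCn(\sO_K/\bS[z_0,z_1];\Z_p) \to \TCn(\sT/\bS[z_0,z_1];\Z_p)
\]
and its $\TP$-analogue. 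Because $e_i$ is a map of $\mathbb{E}_\infty$-ring spectra (by \eqref{TPringsp}) and $(-)^{hS^1}$, $(-)^{tS^1}$ are lax symmetric monoidal, these maps assemble into symmetric monoidal natural transformations between functors from $\Catsat(\sO_K)$ to $\Mod_{\TCn(\sO_K/\bS[z_0,z_1];\Z_p)}(\Sp)$ and $\Mod_{\TP(\sO_K/\bS[z_0,z_1];\Z_p)}(\Sp)$ respectively.

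Next, I verify that both sides are genuinely symmetric monoidal on $\Catsat(\sO_K)$. For the right-hand sides this is Proposition~\ref{Kunnethz1z2}, whose hypothesis is provided by Lemma~\ref{perfTHHz1z2}. For the left-hand sides, $\TCn(-/\bS[z];\Z_p)$ and $\TP(-/\bS[z];\Z_p)$ are symmetric monoidal on $\Catsat(\sO_K)$ by Proposition~\ref{TPTCkunneth}, and post-composition with the base-change functor $-\otimes_{\TCn(\sO_K/\bS[z];\Z_p),e_i}\TCn(\sO_K/\bS[z_0,z_1];\Z_p)$ (respectively its $\TP$-version) is symmetric monoidal since it is extension of scalars along an $\mathbb{E}_\infty$-algebra map. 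Having set this up, Proposition~\ref{compmonoid} applies and yields both claimed equivalences.

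The only delicate point in the argument is checking that the natural transformation really is symmetric monoidal; the rest is mechanical. This reduces to tracking that~\eqref{THHtransivity2} was obtained from a pushout square of $\mathbb{E}_\infty$-algebras in $\Sp^{BS^1}$, which guarantees that the induced base-change identification is compatible with multiplication on both sides. Once that compatibility is recorded, the Antieau--Mathew--Nikolaus rigidity result Proposition~\ref{compmonoid} finishes the proof and no further computation is required.
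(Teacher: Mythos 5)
Your proposal is correct and follows essentially the same route as the paper: both exhibit the two sides as symmetric monoidal functors from $\Catsat(\sO_K)$ to modules over $\heartsuit(\sO_K/\bS[z_0,z_1];\Z_p)$ (via Proposition~\ref{TPTCkunneth} and Proposition~\ref{Kunnethz1z2}) and then conclude by dualizability and Proposition~\ref{compmonoid}. The only cosmetic difference is that you build the comparison map through the transitivity equivalence~\eqref{THHtransivity2} and lax monoidality of $(-)^{hS^1}$, $(-)^{tS^1}$, whereas the paper obtains it directly from the commutative square of relative theories induced by $e_i$; these give the same map.
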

\begin{proof}
For $i=0,1$ and $\heartsuit\in \{\TCn,\TP\}$, there is a commutative diagram:
\[
\xymatrix{
\heartsuit(\sT/\bS[z];\Z_p) \ar[r]^-{e_{i,\sT}}\ar[d]& \heartsuit(\sT/\bS[z_0,z_1];\Z_p)\ar[d]\\
\heartsuit(\sO_K/\bS[z];\Z_p) \ar[r]_-{e_i} & \heartsuit(\sO_K/\bS[z_0,z_1];\Z_p)
}
\]
This diagram induces a morphism of $\heartsuit(\sO_K/\bS[z_0,z_1])$-module spectra 
\begin{equation}\label{heartmor}
\heartsuit(\sT/\bS[z];\Z_p) \otimes_{\heartsuit(\sO_K/\bS[z];\Z_p),e_{i}} \heartsuit(\sO_K/\bS[z_0,z_1];\Z_p) 
 \to \heartsuit(\sT/\bS[z_0,z_1];\Z_p).
\end{equation}
By Proposition~\ref{TPTCkunneth} and Proposition~\ref{Kunnethz1z2}, both sides of the map \eqref{heartmor} yield symmetric monoidal functors from $\Catsat(\sO_K)$ to $\Mod_{\heartsuit(\sO_K/\bS[z_0,z_1];\Z_p)}(\Sp)$. Since every smooth proper $\sO_K$-linear category is dualizable in $\Catsat$, by \cite[Proposition 4.6]{KunnethTP} we obtain the claim.\end{proof}

There is an equivalence of $\mathbb{E}_{\infty}$-ring spectra $\text{ex}:\bS[z_0,z_1] \overset{\simeq}{\to} \bS[z_0,z_1] ~|~ z_0 \mapsto z_1, z_1 \mapsto z_0$, and it induces a commutative diagram
\[
\xymatrix{
\TP(\sO_K/\bS[z];\Z_p) \ar[d]_-{e_0} \ar[rd]^-{e_1} & \\
\TP(\sO_K/\bS[z_0,z_1];\Z_p) \ar[r]_-{\text{ex}} & \TP(\sO_K/\bS[z_0,z_1];\Z_p)
}
\]
and an isomorphism of $\mS$-algebra (see also \cite[section 4]{DuLiu})
\begin{equation}\label{koukamS}
\mStw \otimes_{\theta_0,\mS} \mS\simeq \mStw \otimes_{\theta_1,\mS}\mS.
\end{equation}

By \cite[Corollary 3.7]{LiuWang}, we see that $\TP(\sO_K/\bS[z_0,z_1];\Z_p)$ is $2$-periodic and there is an isomorphism $\pi_*\TP(\sO_K/\bS[z_0,z_1];\Z_p)=\mS^{(2)}[\sigma,\sigma^{-1}]$ by choosing a generator $\sigma\in \pi_2 \TP(\sO_K/\bS[z_0,z_1];\Z_p)$. For $i=0,1$, on homotopy groups, the morphism 
\[
\pi_* \TP(\sO_K/\bS[z];\Z_p) \overset{e_i}{\to} \pi_* \TP(\sO_K/\bS[z_0,z_1];\Z_p)
\]
is given by 
\begin{equation}\label{TPstructuremap}
\mS[u,u^{-1}] \to \mS^{(2)}[\sigma,\sigma^{-1}]
\end{equation}
which is $\theta_i$-linear map andcghgfn sends $u$ to $a_i\cdot\sigma$ for some units $a_i\in (\mS^{(2)})^*$. 

\begin{prop}\label{TPmSmS2}
    There is an isomorphism 
\begin{equation}
\pi_j\TP(\sT/\bS[z];\Z_p) \otimes_{\mS,\theta_i} \mS^{(2)} 
 \simeq \pi_j \TP(\sT/\bS[z_0,z_1];\Z_p) 
\end{equation}
for any $j$ and $i=0,1$. 
\end{prop}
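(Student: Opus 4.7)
The plan is to combine the spectrum-level equivalence of Proposition~\ref{TPTCnSzSz1z2} with a flatness calculation on homotopy groups, and then simplify the resulting graded tensor product using the $2$-periodicity. First, I would invoke Proposition~\ref{TPTCnSzSz1z2} to rewrite
\[
\TP(\sT/\bS[z];\Z_p) \otimes_{\TP(\sO_K/\bS[z];\Z_p),e_i}\TP(\sO_K/\bS[z_0,z_1];\Z_p)\simeq \TP(\sT/\bS[z_0,z_1];\Z_p),
\]
so that computing $\pi_j$ of the right-hand side reduces to computing $\pi_j$ of the derived tensor product on the left.

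The key input is the flatness of $\pi_*\TP(\sO_K/\bS[z_0,z_1];\Z_p)\cong \mS^{(2)}[\sigma^{\pm}]$ as a graded module over $\pi_*\TP(\sO_K/\bS[z];\Z_p)\cong \mS[u^{\pm}]$ along $e_i$. By the description \eqref{TPstructuremap}, the structure map is $\theta_i$-semilinear on $\mS$ and sends $u$ to $a_i\sigma$ for a unit $a_i\in(\mS^{(2)})^*$, so it factors as $\mS[u^{\pm}]\to \mS^{(2)}[u^{\pm}]\simeq \mS^{(2)}[\sigma^{\pm}]$. Flatness of $\mS^{(2)}$ over $\mS$ via $\theta_0$ is \cite[Proposition 2.2.7]{DuLiu}, and the case $\theta_1$ follows via the symmetry \eqref{koukamS}. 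Together these give flatness of $\mS^{(2)}[\sigma^{\pm}]$ over $\mS[u^{\pm}]$ via $e_i$, so the Künneth spectral sequence for the derived tensor product collapses at $E_2$ and yields a natural graded isomorphism
\[
\pi_j\TP(\sT/\bS[z_0,z_1];\Z_p) \cong \bigl(\pi_*\TP(\sT/\bS[z];\Z_p)\otimes_{\mS[u^{\pm}]}\mS^{(2)}[\sigma^{\pm}]\bigr)_j.
\]

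It then remains to identify the right-hand side with $\pi_j\TP(\sT/\bS[z];\Z_p)\otimes_{\mS,\theta_i}\mS^{(2)}$. Because $e_i$ sends the periodicity generator $u$ to the unit multiple $a_i\sigma$, every homogeneous element $m\otimes x\sigma^l$ (with $m\in\pi_k$ and $k+2l=j$) can be normalized, via the relation $u^r m\otimes y=m\otimes a_i^r\sigma^r y$, to the form $(u^{(j-k)/2}m)\otimes x'$ with the left factor in $\pi_j\TP(\sT/\bS[z];\Z_p)$ and the right factor in $\mS^{(2)}$. This identification is natural in $\sT$ and produces the desired isomorphism.

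I expect the main obstacle to be the verification of flatness on $\pi_*$: once one has a clean description of $\mS^{(2)}[\sigma^{\pm}]$ as a free $\mS^{(2)}[u^{\pm}]$-module (using $\sigma=a_i^{-1}u$) combined with flatness of $\mS^{(2)}$ over $\mS$ via $\theta_i$, the remaining steps are essentially formal. The collapse of the Künneth spectral sequence and the $2$-periodicity bookkeeping that reduces the graded tensor product to a single ungraded one over $\mS$ should not present any additional difficulties.
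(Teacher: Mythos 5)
Your argument is correct and follows essentially the same route as the paper: combine the module-spectrum equivalence of Proposition~\ref{TPTCnSzSz1z2} with flatness of \eqref{TPstructuremap} (via \cite[Proposition 2.2.7]{DuLiu} for $\theta_0$ and the exchange isomorphism \eqref{koukamS} for $\theta_1$), then pass to homotopy groups. The only difference is that you spell out the collapse of the K\"unneth spectral sequence and the $2$-periodicity bookkeeping that the paper leaves implicit, which is fine.
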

\begin{proof}
If $i=0$, by \cite[Proposition 2.2.7]{DuLiu} the graded-ring morphism \eqref{TPstructuremap} is flat. We obtain the claim by Proposition \ref{TPTCnSzSz1z2}. By the isomorphism \eqref{koukamS}, $\theta_1$ is also flat. Thus the morphism \eqref{TPstructuremap} is also flat for $i=1$.
\end{proof}

\subsection{The comparison theorem between $\TP(\sT /\bS[z_0,z_1]; \Z_p)$ and $\TP(\sT_{\sO_{\Cu}}/\bS[z_0^{1/p^{\infty}}, z_1^{1/p^{\infty}}]; \Z_p)$}  We regard $\sO_{\Cu}$ as an $\bS[z_0^{1/p^{\infty}},z_1^{1/p^{\infty}}]$-algebra via the map $z_0^{1/p^{n}} \mapsto \pi^{1/p^{n}}, z_1^{1/p^{n}} \mapsto \zeta_{n}\pi^{1/p^{n}}$. We will prove the comparison theorem between $\TP(\sT /\bS[z_0,z_1]; \Z_p)$ and $\TP(\sT_{\sO_{\Cu}}/\bS[z_0^{1/p^{\infty}}, z_1^{1/p^{\infty}}]; \Z_p)$ for a smooth proper $\sO_K$-linear category $\sT$. The following Lemma is inspired by H. Gao in \cite{lettertokeiho}.

\begin{lemma}\label{BMSTHHlemma2} 
The natural map $\THH(\sT_{\sO_{\Cu}};\Z_p) \to \THH(\sT_{\sO_{\Cu}}/\bS[z_0^{1/p^{\infty}},z_1^{1/p^{\infty}}];\Z_p)$ is an equivalence which is compatible with $S^1$-action. In particular, the natural map
\begin{equation*}\label{TPbSzz}
    \TP(\sT_{\sO_{\Cu}};\Z_p) \to \TP(\sT_{\sO_{\Cu}}/\bS[z_0^{1/p^{\infty}},z_1^{1/p^{\infty}}];\Z_p)
\end{equation*}
is an equivalence. 
\end{lemma}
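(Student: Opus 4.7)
The plan is to extend the proof of Lemma~\ref{BMSTHHlemma} from one variable to two variables using a tensor-product decomposition of $\THH$. First I would observe that the augmentation $\bS[z_0^{1/p^{\infty}}, z_1^{1/p^{\infty}}] \to \sO_{\Cu}$ sending $z_0^{1/p^n} \mapsto \pi^{1/p^n}$ and $z_1^{1/p^n} \mapsto \zeta_n \pi^{1/p^n}$ factors through $\bS \to \sO_{\Cu}$, giving $\sT_{\sO_{\Cu}}$ the structure of a $\bS[z_0^{1/p^{\infty}}, z_1^{1/p^{\infty}}]$-linear category and producing the comparison map in question. Exactly as in the proof of Lemma~\ref{BMSTHHlemma}, the transitivity property of relative $\THH$ identifies this comparison map $S^1$-equivariantly with the base change
\[
\THH(\sT_{\sO_{\Cu}};\Z_p) \to \THH(\sT_{\sO_{\Cu}};\Z_p) \otimes_{\THH(\bS[z_0^{1/p^{\infty}}, z_1^{1/p^{\infty}}];\Z_p)} \bS[z_0^{1/p^{\infty}}, z_1^{1/p^{\infty}}]^{\wedge}_p,
\]
so it suffices to show that the augmentation $\THH(\bS[z_0^{1/p^{\infty}}, z_1^{1/p^{\infty}}];\Z_p) \to \bS[z_0^{1/p^{\infty}}, z_1^{1/p^{\infty}}]^{\wedge}_p$ is an equivalence.

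For this last step, I would use the coproduct decomposition $\bS[z_0^{1/p^{\infty}}, z_1^{1/p^{\infty}}] \simeq \bS[z_0^{1/p^{\infty}}] \otimes_{\bS} \bS[z_1^{1/p^{\infty}}]$ in $\mathrm{CAlg}(\Sp)$. Since $\THH$ is a left adjoint (it is the $S^1$-tensoring in $\mathrm{CAlg}(\Sp)$), it preserves coproducts, so after $p$-completion one obtains an $S^1$-equivariant equivalence
\[
\THH(\bS[z_0^{1/p^{\infty}}, z_1^{1/p^{\infty}}];\Z_p) \simeq \THH(\bS[z_0^{1/p^{\infty}}];\Z_p) \otimes_{\bS^{\wedge}_p} \THH(\bS[z_1^{1/p^{\infty}}];\Z_p),
\]
where the right-hand tensor product is $p$-completed. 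By \cite[Proposition 11.7]{BMS2} each tensor factor is equivalent to $\bS[z_i^{1/p^{\infty}}]^{\wedge}_p$, and their $p$-completed tensor product is $\bS[z_0^{1/p^{\infty}}, z_1^{1/p^{\infty}}]^{\wedge}_p$, as desired. The ``in particular'' statement for $\TP$ then follows formally by applying the functor $(-)^{tS^1}$ to both sides of the equivalence of $S^1$-equivariant spectra.

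The main (minor) obstacle I anticipate is tracking $p$-completion carefully through the coproduct identification for $\THH$, since $p$-completion does not in general commute with arbitrary smash products of spectra. However, because every spectrum involved is already $p$-complete by construction and the relative tensor products are taken over $p$-complete bases, this verification is essentially formal, and no new ingredient beyond Lemma~\ref{BMSTHHlemma} and \cite[Proposition 11.7]{BMS2} is required.
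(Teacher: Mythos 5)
Your proposal is correct and follows essentially the same route as the paper: identify the comparison map with the base change $\THH(\sT_{\sO_\Cu};\Z_p)\otimes_{\THH(\bS[z_0^{1/p^{\infty}},z_1^{1/p^{\infty}}];\Z_p)}\bS[z_0^{1/p^{\infty}},z_1^{1/p^{\infty}}]^{\wedge}_{p}$ and reduce to $\THH(\bS[z_i^{1/p^{\infty}}];\Z_p)\simeq \bS[z_i^{1/p^{\infty}}]^{\wedge}_{p}$ from \cite[Proposition 11.7]{BMS2}. You merely make explicit the coproduct-preservation of $\THH$ that the paper's terser proof leaves implicit.
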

\begin{proof}
    There is a diagram 
\begin{align*}
\THH(\sT_{\sO_\Cu};\Z_p) 
 &\to \THH(\sT_{\sO_\Cu}/\bS[z_0^{1/p^{\infty}},z_1^{1/p^{\infty}}];\Z_p)\\
 &\simeq \THH(\sT_{\sO_\Cu};\Z_p)\otimes_{\THH(\bS[z_0^{1/p^{\infty}},z_1^{1/p^{\infty}}];\Z_p)}\bS[z_0^{1/p^{\infty}},z_1^{1/p^{\infty}}]^{\wedge}_{p}
\end{align*}
    which is compatible with $S^1$-action. For $i=0,1$, we have a $S^1$-equivariant equivalence $\THH(\bS[z_i^{1/p^{\infty}}];\Z_p)\simeq \bS[z_i^{1/p^{\infty}}]^{\wedge}_{p}$ \cite[Proposition 11.7]{BMS2}, and we obtain the claim.
\end{proof}


\begin{lemma}\label{TPTPbz0z1pinfty} 
The natural map $\bS[z_0,z_1] \to \bS[z_0^{1/p^{\infty}},z_1^{1/p^{\infty}}]$ induces an isomorphism
\begin{equation*}\label{piTPbSzz}
\pi_i \TP(\sT_{\sO_{\Cu}}/\bS[z_0^{1/p^{\infty}},z_1^{1/p^{\infty}}];\Z_p) \simeq \pi_i\TP(\sT/\bS[z_0,z_1];\Z_p) \otimes_{\mStw,\iota} \Ainf 
\end{equation*}
for any $i$.
\end{lemma}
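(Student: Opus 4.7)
The plan is to reduce this two-variable statement to the one-variable Theorem~\ref{TPTCcomp} via Proposition~\ref{TPmSmS2}, exploiting the factorization $\phi = \iota \circ \theta_0$ of the map $\mS \to \Ainf$ (both send $z \mapsto [\pi^{\flat}]$). This reduction sidesteps the need to verify flatness of $\iota$ directly.

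First, I would invoke Proposition~\ref{TPmSmS2} with $i = 0$ to rewrite, via associativity of tensor product and the equality $\phi = \iota \circ \theta_0$,
\[
\pi_i\TP(\sT/\bS[z_0,z_1];\Z_p) \otimes_{\mStw,\iota} \Ainf \simeq \pi_i\TP(\sT/\bS[z];\Z_p) \otimes_{\mS,\theta_0} \mStw \otimes_{\mStw,\iota} \Ainf \simeq \pi_i\TP(\sT/\bS[z];\Z_p) \otimes_{\mS,\phi} \Ainf.
\]
Next I would upgrade Theorem~\ref{TPTCcomp} to a $\TP$-flavored identity: combining the isomorphism $\pi_i\TCn(\sT/\bS[z];\Z_p) \otimes_{\mS,\varphi} \mS \simeq \pi_i\TP(\sT/\bS[z];\Z_p)$ from the proof of Theorem~\ref{BKtheorem} (valid for $i \geq n$) with Theorem~\ref{TPTCcomp} and the factorization $\ol{\phi} = \phi \circ \varphi$ yields
\[
\pi_i\TP(\sT/\bS[z];\Z_p) \otimes_{\mS,\phi} \Ainf \simeq \pi_i\TP(\sT_{\sO_\Cu};\Z_p)
\]
for $i \geq n$, and this propagates to all $i$ by the $2$-periodicity of $\TP$ on both sides. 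Finally, Lemma~\ref{BMSTHHlemma2} identifies $\pi_i\TP(\sT_{\sO_\Cu};\Z_p)$ with $\pi_i\TP(\sT_{\sO_\Cu}/\bS[z_0^{1/p^\infty},z_1^{1/p^\infty}];\Z_p)$, completing the chain of isomorphisms.

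The main obstacle is ensuring that the composite of the above identifications agrees with the comparison morphism induced by the base change $\bS[z_0,z_1] \to \bS[z_0^{1/p^\infty},z_1^{1/p^\infty}]$ together with $\sO_K \to \sO_\Cu$ (rather than some twist). Concretely, one must check that Liu-Wang's identification $\pi_0\TP(\sO_K/\bS[z_0,z_1];\Z_p) = \mStw$ is compatible with the base change in such a way that $z_0, z_1$ map, in $\pi_0\TP(\sO_\Cu/\bS[z_0^{1/p^\infty},z_1^{1/p^\infty}];\Z_p) = \Ainf$, to the Teichm\"uller lifts $[\pi^\flat]$ and $[\epsilon][\pi^\flat]$ of our chosen compatible systems $z_0^{1/p^n} \mapsto \pi^{1/p^n}$ and $z_1^{1/p^n} \mapsto \zeta_n \pi^{1/p^n}$; this is precisely the map $\iota$ of \cite[Corollary 2.4.5]{DuLiu}. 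Alternatively, one can bypass this bookkeeping by running the symmetric monoidal comparison of Proposition~\ref{compmonoid} directly on the natural map
\[
\TP(\sT/\bS[z_0,z_1];\Z_p) \otimes_{\TP(\sO_K/\bS[z_0,z_1];\Z_p)} \TP(\sO_\Cu/\bS[z_0^{1/p^\infty},z_1^{1/p^\infty}];\Z_p) \to \TP(\sT_{\sO_\Cu}/\bS[z_0^{1/p^\infty},z_1^{1/p^\infty}];\Z_p),
\]
both sides of which define symmetric monoidal functors on $\Catsat(\sO_K)$ thanks to Proposition~\ref{Kunnethz1z2} and a two-variable analogue of Proposition~\ref{THHOCperfect} (obtained by combining Proposition~\ref{perfTHHz1z2} with Lemma~\ref{BMSTHHlemma2}).
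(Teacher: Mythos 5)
Your proposal is correct and follows essentially the same route as the paper: the paper's proof likewise factors the map as $e_0:\bS[z]\to\bS[z_0,z_1]$ followed by the passage to $\bS[z_0^{1/p^\infty},z_1^{1/p^\infty}]$, uses Proposition~\ref{TPmSmS2} for the first step and the one-variable comparison (Theorem~\ref{TPTCcomp} combined with \eqref{TCnKisin}) for the composite, and concludes from $\phi=\iota\circ\theta_0$. Your additional care about matching the identifications with the natural base-change map, and the handling of ``for any $i$'' via $2$-periodicity, are reasonable elaborations of points the paper leaves implicit.
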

\begin{proof}
The morphism $e_0:\bS[z] \to \bS[z_0,z_1]$ induces an isomorphism $\pi_i\TP(\sT/\bS[z];\Z_p) \otimes_{\mS,\theta_0} \mStw \simeq \pi_i \TP(\sT/\bS[z_0,z_1];\Z_p)$, and the composite map $\bS[z]\overset{e_0}{\to} \bS[z_0,z_1] \to \bS[z_0^{1/p^{\infty}},z_1^{1/p^{\infty}}]$ induces an isomorphism \[
\pi_i\TP(\sT/\bS[z];\Z_p) \otimes_{\mS,\phi} \Ainf \simeq \pi_i \TP(\sT_{\sO_{\Cu}}/\bS[z_0^{1/p^{\infty}},z_1^{1/p^{\infty}}];\Z_p).
\]
Since there is an equation of morphisms $\phi=\iota \circ \theta_0$, we obtain the claim.
\end{proof}

\subsection{$\tau$-action on $\TP(\sT_{\sO_{\Cu}};\Z_p)$} The construction in this section is inspired by H. Gao \cite{lettertokeiho}. Let $\tau \in \text{Gal}(L/\bigcup_{n=1}^{\infty} K(\zeta_{n}))$ be a
topological generator and $\tilde{\tau}\in G_K$ be the lift of $\tau$ satisfying $\tilde{\tau}(\pi^{1/p^{n}})=\zeta_n\pi^{1/p^{n}}$. Consider a map $\eta:\bS[z_0^{1/p^{\infty}},z_1^{1/p^{\infty}}] \to \sO_{\Cu}$ which sends $z_0^{1/p^{n}}$ to $\pi^{1/p^n}$ and $z_1^{1/p^n}$ to $\zeta_{n}\pi^{1/p^n}$.

We will use the following notations associated with a smooth proper $\sO_K$-linear category $\sT$.
\begin{itemize}
   \item $e^{\infty}_{0}:\TP(\sT_{\sO_{\Cu}}/\bS[z^{1/p^{\infty}}];\Z_p) \overset{\simeq}{\to}\TP(\sT_{\sO_{\Cu}}/\bS[z_0^{1/p^{\infty}},z_1^{1/p^{\infty}}];\Z_p)$ the morphism given by ${z^{1/p^{\infty}}\mapsto z_0^{1/p^{\infty}}}$
  
  \item $a:\TP(\sT/\bS[z];\Z_p) \to \TP(\sT_{\sO_{\Cu}}/\bS[z^{1/p^{\infty}}];\Z_p)$ the morphism given by the commutative diagram
  \[
  \xymatrix{
  \sO_K  \ar[r] & \sO_{\Cu} \\
  \bS[z] \ar[r] \ar[u] & \bS[z^{1/p^{\infty}}] \ar[u]
  }
  \]
  \item $b:\TP(\sT/\bS[z];\Z_p) \to \TP(\sT_{\sO_{\Cu}}/\bS[z_0^{1/p^{\infty}},z_1^{1/p^{\infty}}];\Z_p)$ the morphism given by commutative diagram
\[
  \xymatrix{
  \sO_K  \ar[r] & \sO_{\Cu} \\
  \bS[z] \ar[r]_-{z\mapsto z_0} \ar[u] & \bS[z_0^{1/p^{\infty}},z_1^{1/p^{\infty}}] \ar[u]_{\eta}
  }
  \]

  \item $c:\TP(\sT/\bS[z_0,z_1];\Z_p) \to \TP(\sT_{\sO_{\Cu}}/\bS[z_0^{1/p^{\infty}},z_1^{1/p^{\infty}}];\Z_p)$ the morphism given by commutative diagram
  \[
  \xymatrix{
  \sO_K \ar[r]& \sO_{\Cu}\\
\bS[z_0,z_1] \ar[u] \ar[r]_-{z_i\mapsto z_i}& \bS[z_0^{1/p^{\infty}},z_1^{1/p^{\infty}}] \ar[u]_{\eta}
  }
  \]  

  \item $\tilde{\tau}_1:\TP(\sT_{\sO_{\Cu}}/\bS[z^{1/p^{\infty}}];\Z_p)\to \TP(\sT_{\sO_{\Cu}}/\bS[z_0^{1/p^{\infty}},z_1^{1/p^{\infty}}];\Z_p)$ the morphism given by the commutative diagram
  \begin{equation}\label{tauonedia}
  \xymatrix{
  \sO_{\Cu} \ar[rr]^{\tilde{\tau}}& &\sO_{\Cu}\\
    \bS[z^{1/p^{\infty}}]\ar[u] \ar[rr]_-{z^{1/p^n}\mapsto z_1^{1/p^n}} &&\bS[z_0^{1/p^{\infty}},z_1^{1/p^{\infty}}] \ar[u]
  }
  \end{equation}
\end{itemize}

\begin{lemma}\label{4.9}
    The following diagram commutes
    \[
    \xymatrix{
   \TP(\sT_{\sO_{\Cu}};\Z_p) \ar[r]^{\tilde{\tau}}_{\simeq} \ar[d]_{}^{\simeq}& \TP(\sT_{\sO_{\Cu}};\Z_p)\ar[d]^{}_{\simeq} \\
   \TP(\sT_{\sO_{\Cu}}/\bS[z^{1/p^{\infty}}];\Z_p)\ar[r]_-{\tilde{\tau}_1} & \TP(\sT_{\sO_{\Cu}}/\bS[z_0^{1/p^{\infty}},z_1^{1/p^{\infty}}];\Z_p)
    }
    \]
\end{lemma}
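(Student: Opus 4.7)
The plan is to exhibit the entire square as the image, under the functor $\THH(-;\Z_p)$ followed by $(-)^{tS^1}$, of a single commutative square of $\mathbb{E}_\infty$-ring maps, so that the desired commutativity becomes a consequence of functoriality together with naturality of the base-change presentation of relative $\THH$.

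First I would observe that the defining diagram~\eqref{tauonedia} for $\tilde\tau_1$ is already a commutative square of $\mathbb{E}_\infty$-rings. With the left vertical $\bS[z^{1/p^\infty}]\to\sO_\Cu$ sending $z^{1/p^n}\mapsto \pi^{1/p^n}$ and the right vertical given by $\eta$ (which sends $z_1^{1/p^n}\mapsto \zeta_n\pi^{1/p^n}$), the square commutes on account of the identity $\eta(z_1^{1/p^n})=\zeta_n\pi^{1/p^n}=\tilde\tau(\pi^{1/p^n})$, which is built into the choice of the lift $\tilde\tau\in G_K$ of $\tau$.

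Next, since $\sT_{\sO_\Cu}=\sT\otimes_{\sO_K}\sO_\Cu$ is an $\sO_\Cu$-linear category on which $\tilde\tau\in G_K$ acts through its $\sO_\Cu$-factor, I apply the naturality of the canonical map $\THH(A;\Z_p)\to \THH(A/B;\Z_p)$ in the pair $(A,B)$ of $\mathbb{E}_\infty$-rings, with $A=\sT_{\sO_\Cu}$ and $B\in\{\bS[z^{1/p^\infty}],\,\bS[z_0^{1/p^\infty},z_1^{1/p^\infty}]\}$, to the square of the previous paragraph. This yields a commutative square in $\Sp^{BS^1}$ whose vertical arrows are the $S^1$-equivariant equivalences of Lemma~\ref{BMSTHHlemma} and Lemma~\ref{BMSTHHlemma2}, whose top horizontal arrow is the $\THH$-version of $\tilde\tau$, and whose bottom horizontal arrow coincides with $\tilde\tau_1$ directly by its definition. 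Applying the exact functor $(-)^{tS^1}:\Sp^{BS^1}\to\Sp$ then produces the required commutative square on $\TP$.

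The only obstacle is bookkeeping: one must check that the base-change presentation
\[
\THH(\sT_{\sO_\Cu}/B;\Z_p)\simeq \THH(\sT_{\sO_\Cu};\Z_p)\otimes_{\THH(B;\Z_p)}B^{\wedge}_p
\]
is sufficiently functorial in the pair $(A,B)$ to transport the commutativity from the square of $\mathbb{E}_\infty$-rings to $\THH$, so that the canonical maps appearing as the vertical arrows are indeed natural with respect to morphisms like $(\tilde\tau,\,z^{1/p^n}\mapsto z_1^{1/p^n})$. This is a direct consequence of the constructions of \cite{NS18} and \cite{BMS2} already used in the paper, in particular the transitivity property of relative $\THH$ that gives~\eqref{THHtransitivity}.
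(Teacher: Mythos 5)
Your proposal is correct and follows essentially the same route as the paper: the paper's (one-line) proof likewise reduces the square to the commutativity of the diagram~\eqref{tauonedia} of $\mathbb{E}_\infty$-rings and the compatibility of the relative-$\THH$ construction with it, then applies $(-)^{tS^1}$. Your version simply spells out the naturality of $\THH(A;\Z_p)\to\THH(A/B;\Z_p)$ in the pair $(A,B)$ that the paper leaves implicit.
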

\begin{proof}
    The diagram~\eqref{tauonedia} is compatible with $\bS$-ring spectrum structure, we obtain the claim.
\end{proof}
The following theorem, inspired by H. Gao \cite{lettertokeiho}, plays the most important role in the proof of Theorem~\ref{maininintro}.
\begin{prop}\label{4.10}
The following diagram commutes
\begin{equation}\label{twodiagram}
\xymatrix{
\TP(\sT_{\sO_{\Cu}}/\bS[z^{1/p^{\infty}}];\Z_p)\ar[r]^-{\tilde{\tau}_1}_-{\simeq} & \TP(\sT_{\sO_{\Cu}}/\bS[z_0^{1/p^{\infty}},z_1^{1/p^{\infty}}];\Z_p)& \TP(\sT_{\sO_{\Cu}}/\bS[z^{1/p^{\infty}}];\Z_p) \ar[l]_-{e_0^{\infty}}^-{\simeq} \\
\TP(\sT/\bS[z];\Z_p)\ar[r]_-{e_{1,\sT}} \ar[u]^-{a} & \TP(\sT/\bS[z_0,z_1];\Z_p) \ar[u]_{c} & \TP(\sT/\bS[z];\Z_p)\ar[l]^-{e_{0,\sT}} \ar[u]^-{a}
}
\end{equation}
\end{prop}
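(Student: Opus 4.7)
The plan is to recognize each of the six maps in \eqref{twodiagram} as the image under the functor $\TP(-/-;\Z_p)$ of a morphism of pairs $(A,\mathcal{D})$, where $A$ is an $\mathbb{E}_\infty$-ring and $\mathcal{D}$ is an $A$-linear stable $\infty$-category. Under this translation, the commutativity of each square in $\Sp$ reduces to checking that the two composite morphisms of pairs coincide on the scalar ring side and on the category side. Once that is verified, functoriality of $\TP$ closes both squares.

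For the right square, I would unpack each composite on the scalar side: $c \circ e_{0,\sT}$ factors through $\bS[z] \xrightarrow{z\mapsto z_0} \bS[z_0,z_1]$ followed by the inclusion into $\bS[z_0^{1/p^{\infty}}, z_1^{1/p^{\infty}}]$, while $e_0^{\infty} \circ a$ factors through $\bS[z] \hookrightarrow \bS[z^{1/p^{\infty}}]$ followed by $z^{1/p^n} \mapsto z_0^{1/p^n}$. Both yield the same $\mathbb{E}_\infty$-ring map $z \mapsto z_0$. On the category side both are the canonical base change $\sT \to \sT_{\sO_\Cu}$, and the $\sO_\Cu$-structure on the target as a $\bS[z_0^{1/p^{\infty}}, z_1^{1/p^{\infty}}]$-algebra via $\eta$ is the same in both cases, so the two morphisms of pairs agree.

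For the left square, the key additional input is that $\tilde{\tau} \in G_K$ restricts to the identity on $\sO_K$, so the composite $\sT \to \sT_{\sO_\Cu} \xrightarrow{\tilde{\tau}} \sT_{\sO_\Cu}$ equals the canonical base change. On scalars, $c \circ e_{1,\sT}$ sends $z \mapsto z_1$ via $\bS[z_0,z_1]$, while $\tilde{\tau}_1 \circ a$ sends $z \mapsto z$ into $\bS[z^{1/p^{\infty}}]$ and then $z^{1/p^n} \mapsto z_1^{1/p^n}$, giving the same $\mathbb{E}_\infty$-ring map $z \mapsto z_1$. Compatibility of these two routes with the $\sO_\Cu$-structures is precisely the content of diagram \eqref{tauonedia}, namely the identity $\eta(z_1^{1/p^n}) = \zeta_n \pi^{1/p^n} = \tilde{\tau}(\pi^{1/p^n})$; this ensures that both composites define the same morphism of pairs $(\bS[z], \sT) \to (\bS[z_0^{1/p^{\infty}}, z_1^{1/p^{\infty}}], \sT_{\sO_\Cu})$.

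The main obstacle is the left square, where one must carefully track how the twist by $\tilde{\tau}$ on $\sO_\Cu$ interacts with the change of scalars from $\bS[z^{1/p^{\infty}}]$ to $\bS[z_0^{1/p^{\infty}}, z_1^{1/p^{\infty}}]$, in particular verifying that the $\tilde{\tau}$-action on $\sT_{\sO_\Cu}$ disappears once composed with the base change from $\sT$ (because $\tilde{\tau}|_{\sO_K}=\text{id}$), and that the structure maps to $\sO_\Cu$ are compatibly twisted by $\tilde{\tau}$. Once these compatibilities are in place, applying $\TP(-/-;\Z_p)$ yields the commutativity of \eqref{twodiagram}.
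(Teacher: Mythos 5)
Your proposal is correct and follows essentially the same route as the paper: the paper likewise exhibits each map as relative $\TP$ applied to a morphism of pairs (base $\mathbb{E}_\infty$-ring, algebra) and checks commutativity by writing down the two commutative cubes of rings $\bS[z]$, $\bS[z_0,z_1]$, $\bS[z^{1/p^{\infty}}]$, $\bS[z_0^{1/p^{\infty}},z_1^{1/p^{\infty}}]$, $\sO_K$, $\sO_{\Cu}$, using $\tilde{\tau}|_{\sO_K}=\id$ and the compatibility $\eta(z_1^{1/p^n})=\zeta_n\pi^{1/p^n}=\tilde{\tau}(\pi^{1/p^n})$ from diagram \eqref{tauonedia} for the left square, and $z\mapsto z_0$ with the identity on $\sO_{\Cu}$ for the right square. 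Your verification of the scalar-side and category-side compatibilities matches the paper's argument, so no gap.
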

\begin{proof}
The diagram~\eqref{tauonedia} fits into the following commutative diagram:
\[
\xymatrix{
\sO_{\Cu} \ar[rr]^{\tilde{\tau}} & & \sO_{\Cu} & \\
&\bS[z^{1/p^{\infty}}] \ar[rr]^-{z^{1/p^{\infty}}\mapsto z_1^{1/p^{\infty}}} \ar[lu] && \bS[z_0^{1/p^{\infty}},z_1^{1/p^{\infty}}]\ar[lu]_-{\eta}\\
\sO_K \ar[uu] \ar[rr]|\hole^(0.3){\text{id}_{\sO_K}}& & \sO_K \ar[uu]|\hole&\\
&\bS[z] \ar[uu] \ar[rr]_{z\mapsto z_1} \ar[lu] & & \bS[z_0,z_1] \ar[uu] \ar[lu]
}
\]
Thus we see that the left square of \eqref{twodiagram} commutes. Look at the following diagram:
\[
\xymatrix{
\sO_{\Cu} \ar[rr]^{\text{id}_{\sO_{\Cu}}} & & \sO_{\Cu} & \\
&\bS[z^{1/p^{\infty}}] \ar[rr]^-{z^{1/p^{\infty}}\mapsto z_0^{1/p^{\infty}}} \ar[lu] && \bS[z_0^{1/p^{\infty}},z_1^{1/p^{\infty}}]\ar[lu]_-{\eta}\\
\sO_K \ar[uu] \ar[rr]|\hole^(0.3){\text{id}_{\sO_K}}& & \sO_K \ar[uu]|\hole&\\
&\bS[z] \ar[uu] \ar[rr]_{z\mapsto z_0} \ar[lu] & & \bS[z_0,z_1] \ar[uu] \ar[lu]
}
\]
then we see that the right
square of \eqref{twodiagram} commutes.
\end{proof}



Fix an integer $i >> 0$. By Proposition~\ref{TPTCkunneth}, $\pi_i \TP(\sT/\bS[z];\Z_p)$ is a finitely generated $\mS$-module. Choose a $\mS$-generator $\mm_1,\mm_2,....,\mm_n $ of $\pi_i \TP(\sT/\bS[z];\Z_p)$. We have an equation $\ol{\phi}=\phi \circ \varphi:\mS \to \Ainf$ (see the diagram \eqref{SAinfdiag}), combine isomorphism \eqref{TCnKisin} with Theorem~\ref{TPTCcomp} then we see that $a$ induces an isomorphism $\pi_i\TP(\sT/\bS[z];\Z_p)\otimes_{\mS,\phi}\Ainf \simeq \pi_i\TP(\sT_{\sO_{\Cu}}/\bS[z^{1/p^{\infty}}];\Z_p)$. For  $x\in \pi_i \TP(\sT/\bS[z];\Z_p)$, we write $\ol{x}$ for the image of $x$ under the composite map $\pi_i\TP(\sT/\bS[z];\Z_p) \overset{a}{\to} \pi_i\TP(\sT_{\sO_{\Cu}}/\bS[z^{1/p^{\infty}}];\Z_p)\overset{\simeq}{\leftarrow} \pi_i\TP(\sT_{\sO_{\Cu}};\Z_p)$, then $\ol{\mm_1},\ol{\mm_2},...,\ol{\mm_n}$ become a $\Ainf$-generator of $\pi_i\TP(\sT_{\sO_{\Cu}};\Z_p)$. Choose elements $a_{x1},a_{x2},...,a_{xn}$  of $\Ainf$ so that
\[
\tilde{\tau}(\ol{x}) = a_{x1}\ol{\mm_1} + a_{x2}\ol{\mm_2} + \cdots + a_{xn}\ol{\mm_n}.
\]
\begin{prop}
For any $x$, any choice of $\mS$-generators $\{\mm_1,\dots,\mm_n\}$, and any  choice of elements $a_{xl}\in\Ainf$, 
the element $a_{xl}$ is contained in $\mStw$ under the inclusion 
$\mStw \overset{\iota}{\hookrightarrow} \Ainf$.
\end{prop}
\begin{proof} Combine Proposition~\ref{4.10} with Lemma~\ref{4.9}, we see the following diagram commutes.  
\begin{equation}\label{threediagram}
\xymatrix{
\TP(\sT_{\sO_{\Cu}};\Z_p) \ar[r]^{\tilde{\tau}}_{\simeq} \ar[d]_{}^{\simeq}& \TP(\sT_{\sO_{\Cu}};\Z_p)\ar[d]^{}_{\simeq} \ar[dr]^-{\simeq} & \\
\TP(\sT_{\sO_{\Cu}}/\bS[z^{1/p^{\infty}}];\Z_p)\ar[r]^-{\tilde{\tau}_1}_-{\simeq} & \TP(\sT_{\sO_{\Cu}}/\bS[z_0^{1/p^{\infty}},z_1^{1/p^{\infty}}];\Z_p)& \TP(\sT_{\sO_{\Cu}}/\bS[z^{1/p^{\infty}}];\Z_p) \ar[l]_-{e_0^{\infty}}^-{\simeq} \\
\TP(\sT/\bS[z];\Z_p)\ar[r]_-{e_{1,\sT}} \ar[u]^-{a} & \TP(\sT/\bS[z_0,z_1];\Z_p) \ar[u]_{c} & \TP(\sT/\bS[z];\Z_p)\ar[l]^-{e_{0,\sT}} \ar[u]^-{a}
}
    \end{equation} We note that $\pi_i\TP(\sT_{\sO_{\Cu}};\Z_p) \to \pi_i\TP(\sT_{\sO_{\Cu}}/\bS[z_0^{1/p^{\infty}},z_1^{1/p^{\infty}}];\Z_p)$ is $\pi_0\TP(\sO_{\Cu};\Z_p)\simeq \Ainf$-linear. The morphism $c:\pi_i\TP(\sT/\bS[z_0,z_1];\Z_p) \to \pi_i\TP(\sT/\bS[z_0^{1/p^{\infty}},z_1^{1/p^{\infty}}];\Z_p)$ is given by $\pi_i\TP(\sT/\bS[z];\Z_p) \otimes_{\mS,\theta_0}\mStw \overset{\text{id}\otimes \iota}{\to} \pi_i\TP(\sT/\bS[z];\Z_p) \otimes_{\mS,\phi}\Ainf$ by Proposition~\ref{TPmSmS2} and Lemma~\ref{TPTPbz0z1pinfty}. Besides, by Proposition~\ref{TPmSmS2}, the morphism $e_{0,\sT}:\pi_i\TP(\sT/\bS[z];\Z_p)\to \TP(\sT/\bS[z_0,z_1];\Z_p)$ is given by $\pi_i\TP(\sT/\bS[z];\Z_p) \to \pi_i\TP(\sT/\bS[z];\Z_p) \otimes_{\mS,\theta_0} \mStw$, and $\{e_{0,\sT}(\mm_1),e_{0,\sT}(\mm_2),...,e_{0,\sT}(\mm_n)\}$ is a $\mStw$-generator of $\TP(\sT/\bS[z_0,z_1];\Z_p)$. Therefore, for $x \in \pi_i\TP(\sT/\bS[z];\Z_p)$, there are $a'_{x1},a'_{x2},...,a'_{xn} \in \mStw\overset{\iota}{\hookrightarrow}\Ainf$ such that 
    \begin{equation*}
    e_{1,\sT}(x)= \displaystyle\Sigma_{l=1}^{n} a_{xl}'e_{0,\sT}(\mm_l).
    \end{equation*}
    Look at the diagram, then we see an equation
    $\tilde{\tau}_1(a(x))=\Sigma a'_{xl}c(e_{0,\sT}(\mm_l))$, and we obtain the claim.
\end{proof}

\begin{remark}\label{Rem4.12}
We can apply the same procedure to $\tilde{\tau}^{-1}$ and we obtain that there are elements $b_{xj}$ of $\mStw $ so that $\tilde{\tau}^{-1}(\ol{x}) = b_{x1}\ol{\mm_1} + b_{x2}\ol{\mm_2} + \cdots + b_{xn}\ol{\mm_n}$, where instead of $\eta$ we use a morphism $\bS[z_0^{1/p^\infty},z_1^{1/p^\infty}] \to \sO_{\Cu}$ which sends $z_0^{1/p^n}$ to $\pi^{1/p^n}$ and $z_1^{1/p^n}$ to $\zeta_n^{-1}\pi^{1/p^n}$.
\end{remark}

 Let $\tilde{\tau}^*$ denote the dual action of $\tilde{\tau}$ on $\pi_i\TP(\sT_{\sO_{\Cu}};\Z_p)^{\vee}:=\Hom_{\Ainf}(\pi_i\TP(\sT_{\sO_{\Cu}};\Z_p),\Ainf)$. Choose a $\mS$-basis $\nn_1,\nn_2,...\nn_d$ of $\pi_i\TP(\sT/\bS[z];\Z_p)^{\vee}$. Combined \eqref{TCnKisin} with Theorem~\ref{TPTCcomp}, there is an isomorphism $\pi_i\TP(\sT/\bS[z];\Z_p)^\vee\otimes_{\mS,\phi}\Ainf \simeq \pi_i\TP(\sT_{\sO_{\Cu}};\Z_p)^\vee$. For $y\in \pi_i\TP(\sT_{\sO_{\Cu}};\Z_p)^{\vee}$, let $\ol{y}$ denote the image of $x$ under the inclusion $\pi_i\TP(\sT/\bS[z];\Z_p)^{\vee} \hookrightarrow \pi_i\TP(\sT/\bS[z];\Z_p)^{\vee}\otimes_{\mS,\phi}\Ainf \simeq \pi_i\TP(\sT_{\sO_{\Cu}};\Z_p)^{\vee}$.  Choose elements $b_{y1},b_{y2},...,b_{yd}$  of $\Ainf$ so that
 \[
 \tilde{\tau}^*(\ol{y})=b_{y1}\ol{\nn_{l}}+b_{y2}\ol{\nn_{2}}+\cdots + b_{yd}\ol{\nn_{d}}.
 \]
 Remark~\ref{Rem4.12} directly induces the following.

\begin{cor}\label{cor414} For any $y$, $b_{y1},b_{y2},...,b_{yd}$ are contained in $\mStw$ under the inclusion $\mStw \overset{\iota}{\hookrightarrow}\Ainf$.
\end{cor}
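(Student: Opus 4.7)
The plan is to dualize the argument of the preceding proposition, applied directly to the finite free $\mS$-module $N := \pi_i \TP(\sT/\bS[z];\Z_p)^{\vee}$. Since $\pi_i\TCn(\sT/\bS[z];\Z_p)^{\vee}$ is a finite free Breuil--Kisin module by Theorem~\ref{BKtheorem} and the Frobenius twist \eqref{TCnKisin} preserves duals, $N$ is itself finite free over $\mS$ with basis $\nn_1,\ldots,\nn_d$.

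The first step is to transport the identifications used in the proof of the preceding proposition to the dual side. Writing $M := \pi_i\TP(\sT/\bS[z];\Z_p)$, which is finitely presented over the Noetherian ring $\mS$, and combining the freeness of $N$ with flatness of $\theta_0,\theta_1: \mS \to \mStw$ and $\phi: \mS \to \Ainf$, the Hom--tensor adjunction yields
\[
\pi_i\TP(\sT/\bS[z_0,z_1];\Z_p)^\vee \simeq N\otimes_{\mS,\theta_i}\mStw \quad (i=0,1),
\]
\[
\pi_i\TP(\sT_{\sO_\Cu};\Z_p)^\vee \simeq N\otimes_{\mS,\phi}\Ainf = \hat{M}^\vee.
\]
Under these identifications, the dual-side analogue of $c$ becomes the natural $\iota$-base-change $N\otimes_{\mS,\theta_0}\mStw \hookrightarrow N\otimes_{\mS,\phi}\Ainf$, while the dual-side analogues of $e_{0,\sT}, e_{1,\sT}$ become the two base-change embeddings $N \hookrightarrow N\otimes_{\mS,\theta_i}\mStw$.

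The second step is to run the argument of the preceding proposition verbatim in this dual setting. Inside the free $\mStw$-module $N\otimes_{\mS,\theta_0}\mStw$ with $\mStw$-basis $\{\nn_l\otimes 1\}_{l=1}^{d}$, I would express the image of $\nn_k$ under the $\theta_1$-embedding $N\hookrightarrow N\otimes_{\mS,\theta_1}\mStw$ --- re-identified with $N\otimes_{\mS,\theta_0}\mStw$ via the canonical isomorphism \eqref{koukamS} --- as a unique combination $\sum_{l=1}^{d} a''_{k,l}\,(\nn_l\otimes 1)$ with $a''_{k,l}\in\mStw$. Chasing this expression through the $\iota$-base-change and the dualized version of the bottom square of \eqref{threediagram}, in exact parallel to the proof of the preceding proposition, yields
\[
\tilde{\tau}^*(\ol{\nn_k}) = \sum_{l=1}^{d} \iota(a''_{k,l})\,\ol{\nn_l}
\]
in $\hat{M}^\vee$, so $b_{k,l} = \iota(a''_{k,l}) \in \iota(\mStw)$ as required.

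The main obstacle will be verifying that the dualized version of the bottom square of \eqref{threediagram} commutes consistently with the identifications above --- i.e., that dualizing the equation $\tilde{\tau}_1 \circ a = c \circ e_{1,\sT}$ yields the analogous equation $\tilde{\tau}^* \circ A^\vee = C^\vee \circ E_1^\vee$ with $A^\vee, C^\vee, E_1^\vee$ the dual-side base-change maps. This reduces to formal properties of Hom--tensor adjunction together with the finite presentation, freeness, and flatness data already invoked; no new structural input is needed beyond the content of the preceding proposition and Remark~\ref{Rem4.12}.
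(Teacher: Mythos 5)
There is a genuine gap, and it sits exactly at the step you flag as the ``main obstacle'': dualizing the equation $\tilde{\tau}_1\circ a = c\circ e_{1,\sT}$ from \eqref{twodiagram}/\eqref{threediagram} does \emph{not} yield $\tilde{\tau}^*\circ A^\vee = C^\vee\circ E_1^\vee$. The action of $\tilde{\tau}$ on $\pi_i\TP(\sT_{\sO_\Cu};\Z_p)$ is only $\Ainf$-semilinear, and the dual action on $\Hom_{\Ainf}(\pi_i\TP(\sT_{\sO_\Cu};\Z_p),\Ainf)$ is $f\mapsto \tilde{\tau}\circ f\circ\tilde{\tau}^{-1}$; that is, $\tilde{\tau}^*$ is the (semilinear) transpose of $\tilde{\tau}^{-1}$, while applying $\Hom_{\Ainf}(-,\Ainf)$ to the $\tilde{\tau}$-square produces the transpose of $\tilde{\tau}$, i.e.\ a factorization of $(\tilde{\tau}^{-1})^*=(\tilde{\tau}^*)^{-1}$ through the $\mStw$-lattice. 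So the chase you describe proves $(\tilde{\tau}^*)^{-1}(\ol{\nn_k})\in\sum_l\iota(\mStw)\,\ol{\nn_l}$, equivalently $\ol{\nn_k}\in\tilde{\tau}^*\bigl(\sum_l\iota(\mStw)\,\ol{\nn_l}\bigr)$, which is not the inclusion $\tilde{\tau}^*(\ol{x})\in\sum_l\iota(\mStw)\,\ol{\nn_l}$ asserted by the corollary. One cannot pass from the one to the other formally: in matrix terms you would need to invert a matrix with entries in $\mStw$ that is only known to be invertible over $\Ainf$, and an element of $\mStw$ that becomes a unit in $\Ainf$ need not be a unit in $\mStw$ (this is the same phenomenon Lemma~\ref{Lem415} is there to control). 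Your Hom--tensor/flatness/freeness identifications in the first step are fine; the error is the identification of the dualized square with a statement about $\tilde{\tau}^*$ rather than about $(\tilde{\tau}^*)^{-1}$.

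This is precisely why the paper deduces the corollary from Remark~\ref{Rem4.12} rather than from the proposition itself: the remark reruns the entire diagram argument for $\tilde{\tau}^{-1}$, replacing $\eta$ by the map sending $z_0^{1/p^n}\mapsto\pi^{1/p^n}$, $z_1^{1/p^n}\mapsto\zeta_n^{-1}\pi^{1/p^n}$ so that the analogue of \eqref{tauonedia} realizes $\tilde{\tau}^{-1}$, and thereby expresses $\tilde{\tau}^{-1}(\ol{x})$ with $\mStw$-coefficients in the generators $\ol{\mm_l}$. It is the transpose of \emph{that} $\tilde{\tau}^{-1}$-factorization --- carried out with exactly the dual-side identifications you set up --- which gives $\tilde{\tau}^*(\ol{x})=\sum_l b_{xl}\ol{\nn_l}$ with $b_{xl}\in\iota(\mStw)$. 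You cite Remark~\ref{Rem4.12} in your closing sentence, but it plays no role in the mechanism you actually describe; to repair the proof, replace the dualization of the $\tilde{\tau}$-square by the dualization of its $\tilde{\tau}^{-1}$-counterpart from the remark.
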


\subsection{$\tilde{\tau}$-action and crystalline representations} Let $\phi_1:\mS\to \Ainf$ be a $W$-linear map which sends $z$ to $[\epsilon][\pi^{\flat}]$. The following diagram commutes.
\[
\xymatrix{
\Ainf \ar[r]^{\tilde{\tau}} & \Ainf \\
\mS \ar[u]^{\phi} \ar[r]^{\theta_1} \ar[ru]^{\phi_1}& \mStw \ar[u]_{\iota}
}
\]
Let $E_1$ denote $\theta_1(E)$, $E_0$ denote $\theta_0(E)$ and $\xi_1$ denote $\tilde{\tau}(\xi)$. We note that $\mStw$ is an integral domain (see \cite[Proof of Lemma~2.3.2]{DuLiu}), and $\iota(E_1)=\xi_1$. Firstly, we prove the following Lemma.
\begin{lemma}\label{Lem415}
    As sub-rings of $\Ainf[\frac{1}{\xi_1}]$, there is an equation
    \[
    \mStw = \mStw[\frac{1}{E_1}] \cap \Ainf
    \]
    of sub-algebra $\Ainf[\frac{1}{\xi_1}]$, where we regard $\mStw[\frac{1}{E_1}]$ as a sub-ring of $\Ainf[\frac{1}{\xi_1}]$ via $\iota[\frac{1}{E_1}]:\mStw[\frac{1}{E_1}] \hookrightarrow \Ainf[\frac{1}{\xi_1}]$.
\end{lemma}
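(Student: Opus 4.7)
The inclusion $\mStw \subseteq \mStw[\frac{1}{E_1}] \cap \Ainf$ is immediate. For the reverse inclusion, the plan is to establish the following equivalent claim: for every $a \in \mStw$ and every $n \geq 1$, if $\iota(a) \in \xi_1^n \Ainf$ then $a \in E_1^n \mStw$. Granting this, any $x \in \mStw[\frac{1}{E_1}] \cap \Ainf$ written as $a/E_1^n$ with $n \geq 0$ minimal satisfies $\iota(a) \in \xi_1^n \Ainf$, hence $a \in E_1^n \mStw$, forcing $n = 0$. Since $\mStw$ is a domain (hence $E_1$-torsion free) and $\Ainf$ is $\xi_1$-torsion free, an elementary induction peels off one factor of $E_1$ at a time and reduces the entire claim to the case $n = 1$, i.e., to the injectivity of the induced map
\[
\ol\iota : \mStw/E_1 \mStw \longrightarrow \Ainf/\xi_1 \Ainf.
\]

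For this injectivity, I would first use the automorphism $\tilde\tau$ of $\Ainf$ (which satisfies $\tilde\tau(\xi) = \xi_1$) to identify $\Ainf/\xi_1 \Ainf \xrightarrow[\simeq]{\tilde\tau^{-1}} \Ainf/\xi \Ainf = \sO_\Cu$, and study the resulting composite $\psi : \mStw \to \sO_\Cu$. A direct computation on Teichm\"uller lifts gives $\psi(z) = \psi(y) = \pi$, so both $y - z$ and $E_1 = E(y)$ lie in $\ker \psi$. I would then combine the defining relation $y - z = E_0 w_0$ in $\mStw$ (with $w_0 = (y-z)/E_0$) with the elementary polynomial identity $E_1 - E_0 = (y - z) Q$, arising from the difference-quotient expansion of $E$ with $Q \in W[y,z]$, to obtain
\[
E_1 = E_0 (1 + w_0 Q) \quad \text{in } \mStw.
\]
Once $1 + w_0 Q$ is shown to be a unit in $\mStw$, it follows that $E_1 \mStw = E_0 \mStw$, and the injectivity of $\ol\iota$ reduces to the classical analog for $\mS$, namely the tautological inclusion $\sO_K = \mS/E\mS \hookrightarrow \Ainf/\xi\Ainf = \sO_\Cu$.

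The main obstacle will be verifying that $1 + w_0 Q$ is a unit in $\mStw$. Since $\mStw$ is $(p, E)$-adically complete, this reduces to checking $w_0 Q \in (p, E) \mStw$, equivalently that $w_0$ lies in the Jacobson radical of $\mStw$. I would attempt this using the explicit formula $\iota(w_0) = \mu [\pi^\flat]/\xi \in \Ainf$, tracking residues modulo $(p, [\pi^\flat])$ to locate the image of $w_0$ in the residue field of $\mStw$. A potentially cleaner alternative is to extract from Du-Liu's description of $\mStw$ as a $(p, E)$-complete prismatic envelope over $\mS$ a faithful flatness statement for $\iota : \mStw \to \Ainf$; faithful flatness would yield the intersection formula $\mStw[\frac{1}{E_1}] \cap \Ainf = \mStw$ immediately from the standard descent of ideals.
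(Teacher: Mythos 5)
Your first half coincides with the paper's own reduction: the paper also boils the lemma down to the injectivity of $\ol{\iota}\colon \mStw/(E_1)\to\Ainf/(\xi_1)$ (it argues by a minimal-$n$ contradiction where you run an induction; the content is the same, using that $\mStw$ is a domain and $\iota(E_1)$ generates $(\xi_1)$). Your identity $E_1=E_0(1+w_0Q)$ is also correct, and $1+w_0Q$ is indeed a unit, though your proposed criterion ``$w_0Q\in(p,E)\mStw$'' is too strong: $w_0$ does not lie in $(p,E)\mStw$, it is only nilpotent modulo $(p,E)$, which is what your Jacobson-radical formulation actually needs; this part is fixable. The genuine gap is the final step. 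Knowing $(E_1)=(E_0)$ in $\mStw$ only converts the problem into the injectivity of $\mStw/(E_0)\to\Ainf/(\xi)$, and you then declare this to be ``the tautological inclusion $\sO_K=\mS/E\mS\hookrightarrow\sO_\Cu$''. That silently replaces $\mStw/(E_0)$ by $\mS/(E)$, i.e.\ it assumes the natural map $\sO_K=\mS/(E)\to\mStw/(E_0)$ is an isomorphism (or at least that $\mStw/(E_0)$ injects into $\sO_\Cu$). This is exactly the nontrivial structural input on which the paper's proof rests, imported from \cite[Lemma 2.2.8]{DuLiu} together with the exchange isomorphism identifying $(E_0)$ and $(E_1)$; it is not a tautology. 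A priori $\mStw/(E_0)$ is much bigger than $\sO_K$: the envelope modulo $E_0$ still carries the class of $w_0$ and its $\delta$-iterates (indeed this is the very reason your unit argument works, since $w_0$ is topologically nilpotent there), so the injectivity of $\mStw/(E_0)\to\sO_\Cu$ is a genuine assertion about $\mStw$ that your proposal neither proves nor cites.

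Your fallback suggestion, faithful flatness of $\iota\colon\mStw\to\Ainf$, would indeed formally yield $\mStw[\frac{1}{E_1}]\cap\Ainf=\mStw$ (contract the ideal $(E_1^n)$ along a faithfully flat map, using $\iota(E_1)\Ainf=\xi_1\Ainf$), but no such statement can simply be ``extracted'' from Du--Liu: what is available there is flatness of the structure maps $\mS\to\mStw$ and injectivity of $\iota$, not flatness of the map into the huge ring $\Ainf$, so this alternative does not close the gap either. To complete your argument you must either invoke the same input as the paper (the identification $\sO_K\simeq\mStw/(E_0)\simeq\mStw/(E_1)$ via \cite[Lemma 2.2.8]{DuLiu} and the exchange symmetry, after which injectivity into $\sO_\Cu$ is immediate) or give an independent proof that $\mStw/(E_0)\to\sO_\Cu$ is injective, which is a statement of essentially the same depth as the lemma itself.
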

\begin{proof}

    It is suffice to show that $\mStw$ contains $\mStw[\frac{1}{E_1}] \cap \Ainf$. The map $\iota$ induces a morphism of $W$-algebra $\ol{\iota}:\mStw/(E_1) \to \Ainf/(\xi_1)$ and $\theta_1$ induces a morphism of $W$-algebra $\ol{\theta_1}:\mS/(E)\to \mStw/(E_1)$. The isomorphism \eqref{koukamS} induces an isomorphism of $\mS$-algebra $\mS\otimes_{\mS,\theta_0}\mStw \simeq \mS\otimes_{\mS,\theta_1}\mStw$ which sends $E_0$ to $E_1$. By \cite[Lemma~2.2.8 (2)]{DuLiu}, $\theta_0$ induces an isomorphism $\sO_K \simeq \mS/(E) \simeq \mStw/(E_0) \simeq \mStw/(E_1)$ of $W$-algebras. Thus we obtain that $\ol{\iota}$ is given by natural $W$-algebra $\sO_K \hookrightarrow \sO_{\Cu}$. We assume that there exists an element $x$ of $\mStw[\frac{1}{E_1}] \cap \Ainf \subset \Ainf[\frac{1}{\xi_1}]$ s.t. $x$ does not contained in $\mStw$. Since $x\in \mStw[\frac{1}{E_1}]$, there is the natural number $n\geq 1$ such that $E_1^n\cdot x$ is in $\mStw$ and $E_1^{n-1}\cdot x$ is not in $\mStw$. Since $\mStw$ is an integral domain,  $E_1^{n}\cdot x \not\in (E_1)\subset \mStw$. Thus the class $\ol{E_1^{n}\cdot x}$ is not zero in $\mStw/(E_1)$. On the other hand, since $x\in \Ainf$, $E_1^{n}\cdot x \in (\xi_1)\subset \Ainf$. Thus the class $E_1^{n}\cdot x$ is zero in $\Ainf/(\xi_1)$. This is contradictory to the fact that $\ol{\iota}:\mStw/(E_1) \to \Ainf/(\xi_1)$ is injective.
\end{proof}

Fix a smooth proper $\sO_K$-linear category $\sT$ and an integer $i \gg 0$.  
The finite free $\mS$-module $\pi_i\TCn(\sT/\bS[z];\Z_p)^{\vee}$ carries a natural $(\varphi,\widehat{G})$-module structure by Theorem~\ref{BKGKmodulethm} and Remark~\ref{remark4.2}. By functoriality of $\can_{(-)}$, we obtain the following commutative diagram:
\[
\xymatrix{
\pi_i\TCn(\sT/\bS[z];\Z_p)^{\vee}\otimes_{\mS}\Ainf 
  \ar[r]_{\simeq}^{\eqref{kenka1}} &
  \pi_i\TCn(\sT_{\sO_{\Cu}}/\bS[z^{1/p^{\infty}}];\Z_p)^{\vee} &
  \pi_i\TCn(\sT_{\sO_{\Cu}};\Z_p)^{\vee} \ar[l]_-{\simeq}  \\
\pi_i\TP(\sT/\bS[z];\Z_p)^{\vee}\otimes_{\mS}\Ainf 
  \ar[r]^{\simeq} \ar[u]_-{\can_{\sT}^{\vee}\otimes id_{\Ainf}} &
  \pi_i\TP(\sT_{\sO_{\Cu}}/\bS[z^{1/p^{\infty}}];\Z_p)^{\vee}
  \ar[u]_-{\can_{\sT_{\sO_{\Cu}/\bS[z^{1/p^{\infty}}]}}^{\vee}} &
  \pi_i\TP(\sT_{\sO_{\Cu}};\Z_p)^{\vee} 
  \ar[u]_-{\can_{\sT_{\sT_{\sO_{\Cu}}}}^{\vee}} \ar[l]_-{\simeq}
}
\]
Hence,
\begin{equation}\label{mujun2}
\can_{\sT}^{\vee}\otimes id_{\Ainf}\colon
\pi_i\TP(\sT/\bS[z];\Z_p)^{\vee}\otimes_{\mS,\phi}\Ainf
\longrightarrow
\pi_i\TCn(\sT/\bS[z];\Z_p)^{\vee}\otimes_{\mS,\phi}\Ainf
\end{equation}
is a $G_K$-equivariant morphism. 

We obtain the following commutative diagram:
\begin{equation}\label{commutativediagram}
\xymatrix{
\pi_i\TP(\sT/\bS[z];\Z_p)^{\vee} 
  \ar[rr]^{\can_{\sT}^{\vee}}\ar[d] &&
  \pi_i\TCn(\sT/\bS[z];\Z_p)^{\vee} \ar[d] \\
\pi_i\TP(\sT/\bS[z];\Z_p)^{\vee}\otimes_{\mS,\phi}\Ainf 
  \ar[rr]^-{\can_{\sT}^{\vee}\otimes id_{\Ainf}} \ar[d]_{\ol{\tau}^*} &&
  \pi_i\TCn(\sT/\bS[z];\Z_p)^{\vee}\otimes_{\mS,\phi}\Ainf 
  \ar[d]_{\ol{\tau}^*} \\
\pi_i\TP(\sT/\bS[z];\Z_p)^{\vee}\otimes_{\mS,\phi}\Ainf
  \ar[rr]^-{\can_{\sT}^{\vee}\otimes id_{\Ainf}} &&
  \pi_i\TCn(\sT/\bS[z];\Z_p)^{\vee}\otimes_{\mS,\phi}\Ainf
}
\end{equation}
Note that $\can_{\sT}^{\vee}$ is $\mS$-linear, and $\can_{\sT}^{\vee}\otimes id_{\Ainf}$ is $\Ainf$-linear.

Choose an $\mS$-basis $\nn_1,\dots,\nn_d$ of 
$\pi_i\TCn(\sT/\bS[z];\Z_p)^{\vee}$.  
For $x \in \pi_i\TCn(\sT/\bS[z];\Z_p)^{\vee}$, let $\overline{x}$ denote its image under the inclusion
\[
\pi_i\TCn(\sT/\bS[z];\Z_p)^{\vee}
\hookrightarrow
\pi_i\TCn(\sT/\bS[z];\Z_p)^{\vee}\otimes_{\mS,\phi}\Ainf.
\]
Since $\{\ol{\nn}_1,\dots,\ol{\nn}_d\}$ is an $\Ainf$-basis of 
$\pi_i\TCn(\sT/\bS[z];\Z_p)^{\vee}\otimes_{\mS,\phi}\Ainf$,  
for each $x \in \pi_i\TCn(\sT/\bS[z];\Z_p)^{\vee}$ there exist elements 
$a_{x1},\dots,a_{xd}\in\Ainf$ such that
\[
\ol{\tau}^*(\ol{x})
= a_{x1}\ol{\nn}_1 + a_{x2}\ol{\nn}_2 + \cdots + a_{xd}\ol{\nn}_d.
\]

\begin{prop}
For any $x$, the elements $a_{x1},a_{x2},\dots,a_{xd}$ lie in $\mS^{(2)}$ under the inclusion 
$\mS^{(2)} \overset{\iota}{\hookrightarrow} \Ainf$.
\end{prop}

\begin{proof}\label{prop416}
After inverting $E$, both $\can_{\sT}^{\vee}$ and the inclusion
\[
\pi_i\TCn(\sT/\bS[z];\Z_p)^{\vee}
\hookrightarrow
\pi_i\TCn(\sT/\bS[z];\Z_p)^{\vee}
\]
become isomorphisms (see \eqref{canisom}).  
Hence, for any $x\in \pi_i\TCn(\sT/\bS[z];\Z_p)^{\vee}$, 
there exists $y\in \pi_i\TP(\sT/\bS[z];\Z_p)^{\vee}$ such that 
\begin{equation}\label{yequalEmx}
f(y)=E^m\cdot x \qquad
\text{in } \pi_i\TCn(\sT/\bS[z];\Z_p)^{\vee}.    
\end{equation}

Choose an $\mS$-basis $\nn_1,\dots,\nn_d$ of $\pi_i\TP(\sT/\bS[z];\Z_p)^{\vee}$.  
There exist elements $c_{jl}\in\mS$ (for $1\le j\le d$, $1\le l\le n$) such that
\begin{equation}\label{nnjcjl}
\can_{\sT}^{\vee}(\nn_j)=c_{j1}\mm_1+c_{j2}\mm_2+\cdots+c_{jn}\mm_n    
\end{equation}
in $\pi_i\TCn(\sT/\bS[z];\Z_p)^{\vee}$.  
Since the upper square of \eqref{commutativediagram} is $\mS$-linear, we have
\begin{equation}\label{olnnjcjl}
(\can_{\sT}^{\vee}\otimes id_{\Ainf})(\ol{\nn}_j)
=\phi(c_{j1})\ol{\mm}_1+\phi(c_{j2})\ol{\mm}_2+\cdots+\phi(c_{jn})\ol{\mm}_n    
\end{equation}
in $\pi_i\TCn(\sT/\bS[z];\Z_p)^{\vee}\otimes_{\mS,\phi}\Ainf$.  
Choose elements $b_{y1},\dots,b_{yd}\in\Ainf$ such that
\begin{equation}\label{hozon}
\tilde{\tau}^*(\ol{y})
=b_{y1}\ol{\nn}_1+b_{y2}\ol{\nn}_2+\cdots+b_{yd}\ol{\nn}_d     
\end{equation}
in $\pi_i\TP(\sT/\bS[z];\Z_p)^{\vee}$.  
By commutativity of \eqref{commutativediagram}, we obtain
\begin{eqnarray*}
\ol{\tau}^*(\ol{E^m\cdot x})
&=& (\can_{\sT}^{\vee}\otimes id_{\Ainf})(\ol{\tau}^*(\ol{y}))\\
&\overset{\eqref{hozon}}{=}&
(\can_{\sT}^{\vee}\otimes id_{\Ainf})\bigl(b_{y1}\ol{\nn}_1+b_{y2}\ol{\nn}_2+\cdots+b_{yd}\ol{\nn}_d\bigr)\\
&\overset{\eqref{olnnjcjl}}{=}&
\Bigl(\sum_{j=1}^{d}b_{yj}\phi(c_{j1})\Bigr)\ol{\mm}_1
+\Bigl(\sum_{j=1}^{d}b_{yj}\phi(c_{j2})\Bigr)\ol{\mm}_2
+\cdots
+\Bigl(\sum_{j=1}^{d}b_{yj}\phi(c_{jn})\Bigr)\ol{\mm}_n.
\end{eqnarray*}

Since $\ol{\tau}^*(\ol{E^m\cdot x}) = E_1^m\cdot \ol{\tau}^*(\ol{x})$, we have
\[
E_1^m\cdot a_{xl} = \sum_{j=1}^{d} b_{yj}\phi(c_{jl})
\]
in $\Ainf$.  
By Diagram~\ref{mSmS2Ainf} and Corollary~\ref{cor414}, the right-hand side lies in $\mS^{(2)}$.  
Hence $a_{xl}\in \mS^{(2)}[1/E_1]\cap \Ainf = \mS^{(2)}$.
\end{proof}

\begin{cor}\label{cor417}
The $p$-adic representation $T_{\Ainf}(\pi_i\TCn(\sT/\bS[z];\Z_p)^{\vee})$ is a $\Z_p$-lattice of a crystalline representation.
\end{cor}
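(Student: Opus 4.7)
The plan is to deduce the corollary from Proposition~\ref{prop416} via Du--Liu's crystalline criterion for $(\varphi,\hat{G})$-modules. By Theorem~\ref{mainwithproof}(1) and Remark~\ref{remark4.2} we already know that the Breuil--Kisin module $\fM := \pi_i\TCn(\sT/\bS[z];\Z_p)^{\vee}$ is a Breuil--Kisin $G_K$-module in the sense of Gao, so that it carries a canonical $(\varphi,\hat{G})$-module structure and $T_{\Ainf}(\fM)\otimes_{\Z_p}\Q_p$ is at least semi-stable with non-negative Hodge--Tate weights. The task is therefore only to upgrade semi-stability to crystallinity.

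Concretely, I would invoke the Du--Liu criterion from \cite[\S 3.3]{DuLiu}: the representation attached to a $(\varphi,\hat{G})$-module is crystalline precisely when the continuous $\mSsttw$-semi-linear $\hat{G}$-action on $\fM \otimes_{\mS} \mSsttw$ already preserves the sub-module $\fM \otimes_{\mS} \mStw$. Since $\hat{G} = \mathrm{Gal}(L/K)$ is topologically generated by $H_K = \mathrm{Gal}(L/K_\infty)$ together with the lift $\tilde{\tau}$ from Section~4.4, it suffices to check this for these two classes of elements. For $g \in H_K$: by Theorem~\ref{BKGKmodulethm}(3) the lattice $\fM$ is pointwise fixed by $G_{K_\infty}$ inside $\widehat{\fM}$, so $g$ acts on $\fM \otimes_{\mS} \mStw$ purely through the natural $\hat{G}$-action on $\mStw$, which is automatic. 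For $\tilde{\tau}$: this is exactly the content of Proposition~\ref{prop416}, which says that for every $x \in \fM$ the coordinates of $\tilde{\tau}^*(\ol{x})$ in the chosen $\mS$-basis $\nn_1,\dots,\nn_d$ lie in $\mStw$, viewed as a subring of $\Ainf$ via $\iota$, rather than in the a priori larger $\mSsttw$ or $\Ainf$.

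Combining these two facts yields that $\fM \otimes_{\mS} \mStw$ is $\hat{G}$-stable inside $\fM \otimes_{\mS} \mSsttw$, and the Du--Liu criterion then gives that $T_{\Ainf}(\fM) \otimes_{\Z_p} \Q_p$ is crystalline. Together with Theorem~\ref{mainwithproof}(1), which identifies $T_{\Ainf}(\fM)$ as a Galois-stable $\Z_p$-lattice inside a semi-stable representation, this finishes the proof.

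The main obstacle in the whole chain has already been overcome in Proposition~\ref{prop416}, whose proof is the technical heart of Section~\ref{sec4}: it required the comparison between $\TP(\sT/\bS[z];\Z_p)$, $\TP(\sT/\bS[z_0,z_1];\Z_p)$ and the $p^\infty$-root refinements, Lemmas~\ref{4.9} and~\ref{4.10} to identify the $\tilde{\tau}$-action with the structure coming from the right unit $e_{1,\sT}$, and finally Lemma~\ref{Lem415} to descend coefficients from $\Ainf$ back into $\mStw$. At this stage, the deduction of Corollary~\ref{cor417} is essentially a bookkeeping exercise in Du--Liu's formalism, with no further arithmetic input needed.
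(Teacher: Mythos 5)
Your proposal is correct and follows essentially the same route as the paper: the paper likewise deduces the corollary from Du--Liu's criterion (\cite[Corollary 3.3.4]{DuLiu}), which reduces crystallinity to showing $\tilde{\tau}^*(\pi_i\TCn(\sT/\bS[z];\Z_p)^{\vee})\subset \pi_i\TCn(\sT/\bS[z];\Z_p)^{\vee}\otimes_{\mS,\theta_0}\mStw$, and then invokes Proposition~\ref{prop416}. Your additional verification of the $H_K$-part is harmless but redundant, since it is already contained in the $(\varphi,\hat{G})$-module structure from Remark~\ref{remark4.2} and Du--Liu's criterion is phrased in terms of $\tilde{\tau}$ alone.
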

\begin{proof}
By \cite[Corollary 3.3.4]{DuLiu}, it is suffice to show that $\tilde{\tau}^*(\pi_i\TCn(\sT/\bS[z];\Z_p)^{\vee})$ is contained in $\pi_i\TCn(\sT/\bS[z];\Z_p)^{\vee}\otimes_{\mS,\theta_0}\mStw$. By Proposition \ref{prop416}, we obtain the claim.
\end{proof}

\begin{remark}
    We expect a more geometric proof of the corollary by showing that, for each $i$, the group $\pi_i\,\TC^{-}(\sT/\mathbb{S}[z];\Z_p)$ underlies a prismatic $F$-crystal. In particular, the associated descent datum $\pi_i\,\TC^{-}(\sT/\mathbb{S}[z];\Z_p)\;\rightrightarrows\;\pi_i\,\TC^{-}(\sT/\mathbb{S}[z];\Z_p)\otimes_{\mS}\mS^{(2)}\;\rightthreearrow  \;\pi_i\,\TC^{-}(\sT/\mathbb{S}[z];\Z_p)\otimes_{\mS}\mS^{(3)}$ should be induced by the canonical maps:
\[
\pi_i\,\TC^{-}(\sT/\mathbb{S}[z];\Z_p)
\;\rightrightarrows\;
\pi_i\,\TC^{-}(\sT/\mathbb{S}[z_0,z_1];\Z_p)
\;\rightthreearrow\;
\pi_i\,\TC^{-}(\sT/\mathbb{S}[z_0,z_1,z_2];\Z_p).
\]
However, we do not currently know whether the natural base–change map
\[
\pi_i\,\TC^{-}(\sT/\mathbb{S}[z];\Z_p)\otimes_{\mS}\mS^{(3)}
\;\xrightarrow{\;\sim\;}\;
\pi_i\,\TC^{-}(\sT/\mathbb{S}[z_0,z_1,z_2];\Z_p)
\]
is an isomorphism; this hinges on the as–yet unclear structure of $\pi_*\TC^{-}(\sO_K/\mathbb{S}[z_0,z_1,z_2];\Z_p)$.

\end{remark}

\subsection{The proof of Main Theorems} The following theorem is the summary of section 3 and section 4.

\begin{thm}\label{truemainwithproof}
Let $\sT$ be a smooth proper $\sO_K$-linear category. Then there exists an integer $n\ge 0$ such that, for all $i\ge n$, the following statements hold:
\begin{itemize}
 \item[(1)] Breuil-Kisin module $\pi_i \TCn(\sT/\bS[z];\Z_p)^{\vee}$ has a Breuil-Kisin $G_K$-module structure in the sense of \cite{Gao}.

 \item[(2)] Breuil-Kisin module $\pi_i \TCn(\sT/\bS[z];\Z_p)^{\vee}$ has a $(\varphi,\hat{G})$-module structure in the sense of \cite{DuLiu}.

 \item[(3)] The $\Z_p[G_K]$-module $T_{\Ainf}(\pi_i \TCn(\sT/\bS[z];\Z_p)^{\vee})$ is a $\Z_p$-lattice of a crystalline representation.

 \item[(4)] If $\sT_{\Cu}$ admits a geometric realization, then there is a $G_K$-equivariant isomorphism 
 \[
 T_{\Ainf}(\pi_i \TCn(\sT/\bS[z];\Z_p)^{\vee}) \simeq \pi_i\LK K(\sT_\Cu)^{\vee}
 \]
 of $\Z_p$-modules.
\end{itemize}
\end{thm}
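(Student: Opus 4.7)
The plan is to assemble this as a summary theorem, since each clause has been established (or is about to be established) in the body of sections~\ref{sec3} and \ref{sec4}. Part (1) is nothing more than a restatement of Theorem~\ref{BKGKmodulethm}: I would invoke directly the four bullet points verified there, namely the continuous $\Ainf$-semi-linear $G_K$-action on $\widehat{\pi_i\TCn(\sT/\bS[z];\Z_p)^{\vee}}$, commutativity with $\widehat{\varphi}$, the $G_{K_\infty}$-fixed-point containment, and $G_{K^{ur}}$-triviality after base change to $W(\ol{k})[\tfrac1p]$, and cite \cite[Lemma~7.1.2]{Gao} to conclude the Breuil-Kisin $G_K$-structure.

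For (2), the plan is to apply Du-Liu's theorem \cite[Theorem~3.3.3]{DuLiu}, as summarized in Remark~\ref{remark4.2}: starting from (1), the $G_K$-action on $\pi_i\TCn(\sT/\bS[z];\Z_p)^{\vee}\otimes_{\mS,\phi}\Ainf$ preserves the sub-$\mSsttw$-module $\pi_i\TCn(\sT/\bS[z];\Z_p)^{\vee}\otimes_{\mS}\mSsttw$, factors through $\hat G$ on this sub-module (because $G_L$ acts trivially on $\fM\subset \widehat\fM$ by clause~(3) of Definition~\ref{BKGKdefn}), and the Breuil-Kisin $G_K$-axioms translate into the $(\varphi,\hat G)$-axioms. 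No new input is needed here beyond (1) and Du-Liu.

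For (3), the content is the upgrade from semi-stable (Theorem~\ref{mainwithproof}(1)) to crystalline; this is exactly Corollary~\ref{cor417}. I would reduce via Du-Liu's criterion \cite[Corollary~3.3.4]{DuLiu} to the statement that $\tilde\tau^*$ preserves $\pi_i\TCn(\sT/\bS[z];\Z_p)^{\vee}\otimes_{\mS,\theta_0}\mStw$ inside $\pi_i\TCn(\sT/\bS[z];\Z_p)^{\vee}\otimes_{\mS,\phi}\Ainf$. That containment is the content of Proposition~\ref{prop416}, which in turn rests on two ingredients: the geometric fact (Proposition~\ref{4.10} combined with Lemma~\ref{4.9}) that $\tilde\tau$ fits into the two-variable $\TP$ diagram, yielding matrix coefficients in $\mStw$ on $\TP$-level (Corollary~\ref{cor414}), and the algebraic descent Lemma~\ref{Lem415} asserting $\mStw=\mStw[\tfrac1{E_1}]\cap\Ainf$, which transports the $\mStw$-containment from $\TP^\vee$ to $\TCn^\vee$ via the isomorphism $\can[\tfrac1E]$.

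Finally, for (4), assuming $\sT_{\Cu}$ admits a geometric realization, combine Theorem~\ref{mainwithproof}(2), which produces the $G_K$-equivariant isomorphism $T_{\Ainf}(\pi_i\TCn(\sT/\bS[z];\Z_p)^{\vee})\simeq \pi_i\LK K(\sT_\Cu)^{\vee}$ from the commutative diagram \eqref{GKequidiagram} and Frobenius-equivariance after inverting $\mu$ and base-changing to $W(\Cu^\flat)$, with the crystalline conclusion of (3) to strengthen the semi-stable statement of Theorem~\ref{mainwithproof} to the desired crystalline one. The only step that required genuinely new work is (3); everything else is packaging. I expect the only potential subtlety in the write-up to be checking that the isomorphism of (4) is compatible with the $(\varphi,\hat G)$- and crystalline structures so that the conclusion really lands in crystalline representations, but this is immediate from the functoriality of $T_{\Ainf}(-)$ together with the full faithfulness of the $(\varphi,\hat G)$-module functor on the crystalline locus.
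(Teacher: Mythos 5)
Your proposal is correct and matches the paper's proof, which is precisely the four-line assembly you describe: (1) from Theorem~\ref{BKGKmodulethm}, (2) from Remark~\ref{remark4.2}, (3) from Corollary~\ref{cor417}, and (4) from Theorem~\ref{mainwithproof}. Your unpacking of the dependency chain behind (3) (Proposition~\ref{prop416} via Corollary~\ref{cor414} and Lemma~\ref{Lem415}) is accurate but not needed at this point in the text, since those results are already established.
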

\begin{proof}
(1) follows from Theorem~\ref{BKGKmodulethm}. (2) follows from Remark~\ref{remark4.2}. (3) follows from Corollary~\ref{cor417}. (4) follows from Theorem~\ref{mainwithproof}.
\end{proof}
\begin{thm}[Main Theorem]\label{mainmainmain1}
     Let $\sT$ be a smooth proper $\sO_K$-linear category. If $\sT_{\Cu}$ admits a geometric realization, then Conjecture~\ref{NCcryconj} holds for $\sT$, i.e, there is an isomorphism of $\Bcry$-module:
\[
\pi_i\TP(\sT_k;\Z_p)\otimes_W \Bcry \simeq \pi_i \LK K(\sT_\sC)\otimes_{\Z_p}\Bcry
\]
which is compatible with $G_K$-action and Frobenius endomorphism. 
\end{thm}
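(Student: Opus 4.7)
The existence of the isomorphism and its Frobenius-equivariance are already essentially established: Corollary~\ref{Bcrycor} gives a $\varphi$-equivariant isomorphism of $\Bcry$-modules
\[
\pi_i\TP(\sT_k;\Z_p)\otimes_W \Bcry \;\simeq\; \pi_i \LK K(\sT_\sC)\otimes_{\Z_p}\Bcry
\]
by applying Theorem~\ref{NCBMS}(2)--(3) and the (non-canonical) $\varphi$-equivariant isomorphism of Lemma~\ref{BKlemma} to the Breuil--Kisin module $\fN := \pi_i\TCn(\sT/\bS[z];\Z_p)$. The genuine new content of the statement is thus the $G_K$-equivariance.

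The plan is to upgrade this $\varphi$-equivariant isomorphism to a $(G_K,\varphi)$-equivariant one using the crystallinity from Theorem~\ref{truemainwithproof}(3). Setting $V := \pi_i\LK K(\sT_\Cu)\otimes_{\Z_p}\Q_p$, the crystallinity of $V$ produces, via the crystalline comparison, a canonical $(G_K,\varphi)$-equivariant isomorphism
\[
V \otimes_{\Q_p} \Bcry \;\simeq\; D_{\cry}(V) \otimes_{K_0} \Bcry.
\]
It then suffices to identify the two isocrystals $D_{\cry}(V) \simeq \pi_i\TP(\sT_k;\Z_p)[\tfrac{1}{p}]$ over $(K_0,\varphi)$; composing this identification (extended to $\Bcry$) with the crystalline comparison then yields the Main Theorem.

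For the identification I would combine Corollary~\ref{Bcrycor} with the crystalline comparison above to obtain a $\varphi$-equivariant isomorphism of $\Bcry$-modules
\[
\pi_i\TP(\sT_k;\Z_p)[\tfrac{1}{p}]\otimes_{K_0}\Bcry \;\simeq\; D_{\cry}(V)\otimes_{K_0}\Bcry.
\]
Both sides are base changes of finite-dimensional $K_0$-isocrystals, so their Newton polygons must agree and Dieudonn\'e--Manin detects the $(K_0,\varphi)$-isomorphism class after base change to $W(\overline{k})[\tfrac{1}{p}]$. To pin down the $K_0$-isomorphism class itself, I would invoke Theorem~\ref{BKGKmodulethm}(4), which asserts that $\pi_i\TCn(\sT_{\sO_\Cu};\Z_p)^\vee[\tfrac{1}{p}]\otimes_{\Ainf[\frac{1}{p}]}W(\overline{k})[\tfrac{1}{p}]$ is fixed by $G_{K^{\mathrm{ur}}}$: this realises $D_{\cry}(V)^\vee$ as the $K_0$-isocrystal naturally attached to the Breuil--Kisin $G_K$-module $\pi_i\TCn(\sT/\bS[z];\Z_p)^\vee$, and matches it with (the dual of) $\pi_i\TP(\sT_k;\Z_p)[\tfrac{1}{p}]$ via Theorem~\ref{NCBMS}(3).

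The main obstacle is precisely this matching of $K_0$-isocrystal structures. Lemma~\ref{BKlemma} is explicitly non-canonical in its choice of section $k \to \sO_\Cu/p$, so any isomorphism derived from it carries no a priori $G_K$-information; the whole apparatus of Sections~\ref{sec3}--\ref{sec4}---Breuil--Kisin $G_K$-modules and Du--Liu's $(\varphi,\hat{G})$-modules---exists precisely to supply the crystallinity needed to reconcile the non-canonical comparison of Corollary~\ref{Bcrycor} with the canonical crystalline comparison. Once the two $(K_0,\varphi)$-isocrystals are identified, the $(G_K,\varphi)$-equivariant isomorphism of the Main Theorem follows by base-changing to $\Bcry$.
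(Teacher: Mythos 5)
Your overall strategy---use the crystallinity of $V=\pi_i\LK K(\sT_\Cu)\otimes_{\Z_p}\Q_p$ to get the canonical comparison $V\otimes_{\Q_p}\Bcry\simeq D_{\cry}(V)\otimes_{K_0}\Bcry$, then identify $D_{\cry}(V)$ with $\pi_i\TP(\sT_k;\Z_p)[\frac{1}{p}]$---is the same as the paper's, but the step you yourself flag as ``the main obstacle'' is not actually closed by your argument. A Newton-polygon/Dieudonn\'e--Manin comparison only controls the isocrystal after base change to $W(\ol{k})[\frac{1}{p}]$; since $k$ need not be algebraically closed, two isocrystals over $K_0$ with the same Newton polygon need not be isomorphic (already for unit-root objects, which encode arbitrary $\mathrm{Gal}(\ol{k}/k)$-representations), and the non-canonical isomorphism of Lemma~\ref{BKlemma} carries no descent datum. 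The paper avoids this entirely: it never passes through Corollary~\ref{Bcrycor} for the $G_K$-statement, but instead uses the structural identification from Kisin/Breuil--Kisin theory (cited as \cite[Remark 4.5]{BMS1}) that for the crystalline Breuil--Kisin $G_K$-module $\fM=\pi_i\TCn(\sT/\bS[z];\Z_p)^{\vee\vee}$ one has a \emph{canonical} isomorphism $D_{\cry}(T_{\Ainf}(\fM)\otimes\Q_p)\simeq \fM\otimes_{\mS,\tilde{\phi}}W[\frac{1}{p}]$, and then applies Theorem~\ref{TCnTPcomp3} to identify the right-hand side with $\pi_i\TP(\sT_k;\Z_p)[\frac{1}{p}]$. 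You gesture at this via Theorem~\ref{BKGKmodulethm}(4), which is the right ingredient, but you need to actually invoke the $D_{\cry}$-computation for Breuil--Kisin $G_K$-modules rather than try to recover the $K_0$-structure from slope data. (A smaller bookkeeping point: the paper works with the double dual $\pi_i\TCn(\sT/\bS[z];\Z_p)^{\vee\vee}$ and Remark~\ref{rem38}, since only the dual is known to be finite free.)

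The second genuine gap is that every input you use (Theorem~\ref{truemainwithproof}(3), Theorem~\ref{NCBMS}(2)--(3)) is only available for $i\geq n$, so your argument proves the theorem only for $i$ sufficiently large, whereas the statement is for all $i$. The paper's proof has a second half: it uses the $2$-periodicity of $\LK K$ and of $\TP(\sT_k;\Z_p)[\frac{1}{p}]$, together with $\pi_2\LK K(\Cu)\simeq\Z_p(1)$ and $\pi_2\TP(k;\Z_p)[\frac{1}{p}]\simeq W[\frac{1}{p}](1)$, to shift the isomorphism by Tate twists (which are harmless after tensoring with $\Bcry$) and thereby deduce the case of arbitrary $i\in\Z$ from the case of large $i$. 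You need to add this reduction.
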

\begin{proof}
    Firstly, we will prove the claim when $i$ is large enough. By Remark~\ref{rem38}, we have a $G_K$-equivariant isomorphism 
    \[
    \pi_i \LK K(\sT_{\Cu})^{\vee\vee} \simeq T_{\Ainf}(\pi_i\TCn(\sT/\bS[z];\Z_p)^{\vee\vee}),
    \]
    thus we have an isomorphism
    \[
    \pi_i \LK K(\sT_{\Cu})^{\vee\vee} \otimes_{\Z_p} \Bcry \simeq \pi_i\TCn(\sT/\bS[z];\Z_p)^{\vee\vee}\otimes_{\mS}\Bcry
    \]
    which is $G_K$-equivariant and compatible with Frobenius endomorphism, and there is the identification of rational Dieudonn\'{e} modules (see \cite[Remark 4.5]{BMS1})
    \[
    D_{\text{crys}}(\pi_i \LK K(\sT_{\Cu})^{\vee\vee}\otimes_{\Z_p}\Q_p) = \pi_i\TCn(\sT/\bS[z];\Z_p)^{\vee\vee} \otimes_{\mS,\tilde{\phi}}W[\frac{1}{p}].
    \] By Theorem~\ref{TCnTPcomp3}, we have an isomorphism $\pi_i\TCn(\sT/\bS[z];\Z_p)^{\vee\vee} \otimes_{\mS,\tilde{\phi}}W[\frac{1}{p}] \simeq \pi_i\TP(\sT_{k};\Z_p)[\frac{1}{p}]$. Thus we obtain the claim.

 Now we prove the general case. For any $i\in\Z$, since $\LK K(\sT)$ is $2$-priodic, there is an isomorphism
 \[
 \pi_i \LK K(\sT) \otimes_{\pi_0 \LK K(\Cu)} \pi_2 \LK K(\Cu) \simeq \pi_{i+2} \LK K(\sT),
 \]
 and we have an isomorphism $\pi_2 \LK K(\Cu) \simeq \Z_p(1)$ (see \cite{Thomason}), we obtain the isomorphism 
 \[
 \pi_i \LK K(\sT) (1) \simeq\pi_{i+2} \LK K(\sT).
 \]
 Similarly, there is an isomorphism
 \[
 \pi_i \TP(\sT_k;\Z_p)[\frac{1}{p}] \otimes_{\pi_0 \TP(k;\Z_p)[\frac{1}{p}]} \pi_2 \TP(k;\Z_p)[\frac{1}{p}] \simeq \pi_{i+2} \TP(\sT_k;\Z_p)[\frac{1}{p}],
 \] 
 and we have an isomorphism $\pi_2 \TP(k;\Z_p)[\frac{1}{p}] \simeq W[\frac{1}{p}](1)$ (see \cite{BMS2}), where the twist refers to twisting the Frobenius, we obtain an isomorphism
 \[
\pi_i \TP(\sT_k;\Z_p)[\frac{1}{p}](1) \simeq \pi_{i+2} \TP(\sT_k;\Z_p)[\frac{1}{p}].
 \] 
Since we already proved the claim when $i$ is large enough, we obtain the claim. 
 \end{proof}



\section*{Acknowledgements}
\thispagestyle{empty}
The author would like to thank Federico Binda, Lars Hesselholt, Buntaro Kakinoki, Shunsuke Kano, Hyungseop Kim and Hiroyasu Miyazaki for helpful discussions related to this subject. The author would also like to thank Ryomei Iwasa for comments on an earlier draft and Alexender Petrov for useful comments on a draft, and Tasuki Kinjo for helpful discussions about derived schemes, Isamu Iwanari, Atsushi Takahashi and Shinnosuke Okawa for helpful discussions about non-commutative algebraic geometry. The author is deeply grateful to Hui Gao for helpful discussions about $(\varphi,\hat{G})$-modules and for sharing his ideas about Theorem~\ref{main} and Theorem~\ref{maininintro} with us. The author is also grateful to the anonymous referee for a careful reading of the manuscript and for many insightful comments and suggestions, which greatly improved the paper.

\section*{Funding}
\thispagestyle{empty} This work was supported by Kakenhi Grants 21H04994.


\bibliography{bib}
\bibliographystyle{junsrt}


\end{document}